\newcommand{\p}{\mathbb{P}}
\newcommand{\e}{\mathbb{E}}
\newcommand{\var}{\ensuremath{\operatorname{Var}}}
\newcommand{\eye}{\ensuremath{\Ibb}}
\newcommand{\cov}{\ensuremath{\operatorname{Cov}}}
\newcommand{\T}{^\intercal}
\newcommand{\supp}{\operatorname{supp}}
\newcommand\independent{\protect\mathpalette{\protect\independenT}{\perp}}
\def\independenT#1#2{\mathrel{\rlap{$#1#2$}\mkern2mu{#1#2}}}
\newcommand{\bfX}{\mathbf{X}}
\newcommand{\bfZ}{\mathbf{Z}}
\newcommand{\bfw}{\mathbf{w}}
\def\smbbeta{ {\scriptsize\bbeta}}
\def\Ibb{\mathbb{I}}
\def\Xbb{\mathbb{X}}
\def\Dsc{\mathcal{D}}
\definecolor{darkred}{RGB}{150,50,50}
\definecolor{brown}{RGB}{150,000,000}
\definecolor{green}{RGB}{000,050,000}
\definecolor{purple}{RGB}{050,050,250}
\def\Jsc{\mathcal{J}}
\def\DKL{D_{\mbox{\tiny KL}}}
\definecolor{lbcolor}{rgb}{0.95,0.95,0.95}
\begin{document}

\title{$L_1$-Regularized Least Squares for Support Recovery of High Dimensional Single Index Models with Gaussian Designs}

\author{Matey Neykov\thanks{Department of Operations Research and Financial Engineering, Princeton University, Princeton, NJ 08544} \and Jun S. Liu\thanks{Department of Statistics, Harvard University, Cambridge, MA 02138} \and Tianxi Cai \thanks{Department of Biostatistics, Harvard University, Boston, MA 02115}}

\date{}

\maketitle

\begin{abstract}  It is known that for a certain class of single index models (SIMs)  $Y = f(\bX_{p \times 1}\T\bbeta_0, \varepsilon)$, support recovery is impossible when $\bX \sim \cN(0, \Ibb_{p \times p})$ and a \textit{model complexity adjusted sample size} is below a critical threshold. Recently, optimal algorithms based on Sliced Inverse Regression (SIR) were suggested. These algorithms work provably under the assumption that the design $\bX$ comes from an i.i.d. Gaussian distribution. In the present paper we analyze algorithms based on covariance screening and least squares with $L_1$ penalization (i.e. LASSO) and demonstrate that they can also enjoy optimal (up to a scalar) rescaled sample size in terms of support recovery, albeit under slightly different assumptions on $f$ and $\varepsilon$ compared to the SIR based algorithms. Furthermore, we show more generally, that LASSO succeeds in recovering the signed support of $\bbeta_0$ if $\bX \sim \cN(0, \bSigma)$, and the covariance $\bSigma$ satisfies the irrepresentable condition. Our work extends existing results on the support recovery of LASSO for the linear model, to a more general class of SIMs.
\end{abstract}

\noindent {\bf Keywords:} Single index models, Sparsity, Support recovery, High-dimensional statistics, LASSO

\section{Introduction}

Modern data applications often require scientists to deal with high-dimensional problems in which the sample size $n$ could be much less than the dimensionality of the covariates $p$. To handle such challenging problems, structural assumptions on the data generating mechanism are often imposed. Such assumptions are motivated by the fact that classical procedures such as linear regression provably fail, unless the ratio $p/n$ converges to $0$. However, modern procedures based on regularization may work well under the high dimensional setting with additional sparsity assumptions. In the sparse high dimensional setting, it is often of interest to uncover the sparsity pattern, or in other words to select the relevant variables for the model. Under
generalized linear models, support recovery can be achieved by fitting these models with penalized optimization procedures such as LASSO \citep{tibshirani1996regression} or Dantzig Selector \citep{candes2007dantzig}, which are computationally inexpensive compared to exhaustive search approaches. The LASSO algorithm's variable selection/support recovery capabilities under generalized linear models, have been extensively studied \citep[e.g. among others]{meinshausen2006high, zhao2006model, wainwright2009sharp, lee2013model}. 
However, much less is known under potential mis-specification of these commonly used models or under more general models. 

In this paper, we focus on recovering the support of the regression coefficients $\bbeta_0$ under a \textit{single index model} (SIM): 
\begin{equation} \label{SIM}
Y = f(\bX\T\bbeta_0, \varepsilon), \quad \varepsilon \independent \bX
\end{equation}
where both the link function $f$ and the distribution of $\varepsilon$ are left unspecified. Throughout,  we assume that $\bbeta_0\T \bSigma \bbeta_0 = 1$ for identifiability  and $E(\bX) = 0$, where $\bSigma = E(\bX\bX\T)$. We are specifically interested in the case where $\bX \in \RR^p \sim \cN(0, \bSigma)$ and $\bbeta_0$ is $s$-sparse with $s < p$. Obviously, the SIM includes many commonly used parametric or semi-parametric regression models such as the linear regression model as special cases. Under (\ref{SIM}) and the sparsity assumption, we aim to show that the standard least squares LASSO algorithm,
\begin{align} \label{LASSOproblem:intro}
\widehat \bbeta = \argmin_{\smbbeta \in \RR^p}  \left\{\frac{1}{2n} \sum_{i = 1}^n(Y_i - \bX_i\T \bbeta)^2 + \lambda \|\bbeta\|_1 \right\},
\end{align}
can successfully recover the support of $\bbeta_0$ provided standard regularity conditions and that the {\em model complexity adjusted effective sample size,} $$n_{p,s} = n/\{s\log(p-s)\},$$ is 
sufficiently large. Obviously, for most choices of $f$, fitting (\ref{LASSOproblem:intro}) is essentially making inference under the mis-specified linear regression model. 

The least squares LASSO algorithm has been used frequently in practice to perform variable selection for analyzing genomic data \citep[e.g.]{cantor2010prioritizing, zhao2011pathway, wang2013genome}. However, the linear model with Gaussian error is unlikely to be the true model in many such cases. Hence it is of practical importance to theoretically establish that the LASSO's support recovery capabilities are in fact robust to mis-specification. In addition to arguing that LASSO is robust, and perhaps even more surprisingly, we demonstrate that fitting the mis-specified linear model with LASSO penalty can optimally (up to a scalar) achieve support recovery with respect to the effective sample size $n_{p,s}$, for certain classes of $\bSigma$ and SIMs in a minimax sense. In the special case when $\bSigma =  \Ibb_{p \times p}$, the LASSO algorithm can be slightly modified into a simple covariance screening procedure which possesses similar properties as the LASSO procedure.


\subsection{Overview of Related Work}

When the dimension $p$ is small, inference under a SIM has been studied extensively in the literature \cite[e.g.]{xia1999single, horowitz2009semiparametric, peng2011penalized, mccullagh1989generalized} among many others. In the highly relevant line of work on sufficient dimension reduction, many seminal insights can be found in \cite{li1989regression,li1991sliced, cook2005sufficient}. When $\bX \sim \cN(0, \bSigma)$, results given in \cite{li1989regression} can be used to show that $\mbox{argmin}_{\smbbeta}\{\sum_{i = 1}^n(Y_i - \bX_i\T \bbeta)^2\}$
consistently estimates $\bbeta_0$ up to a scalar. When $\bbeta_0$ is sparse, the sparse sliced inverse regression procedure given in \cite{li2006sparse} can be used to effectively recover $\bbeta_0$ under model (\ref{SIM}) although their procedure requires a consistent estimator of $\bSigma^{-1/2}$.

In the high dimensional setting with diverging $p$, \cite{alquier2013sparse} were the first to consider the sparse SIM, and proposed an estimation framework using a PAC-Bayesian approach. \cite{wang2015distribution} and \cite{wang2012non} demonstrated that when support recovery can be achieved when $p = O(n^k)$ under a SIM via optimizations in the form of
$\widehat \bbeta = \argmin_{\smbbeta \in \RR^p} \frac{1}{2n} \sum_{i = 1}^n(F_n(Y_i) - 1/2 - \bX_i\T \bbeta)^2 + \sum_{j = 1}^p J_\lambda(\beta_j),$
where $J_\lambda$ is a penalty function and $F_n(x) = \frac{1}{n}\sum_{i = 1}^n \mathds{1}(Y_i \leq x)$. 
Regularized procedures have also been proposed for specific choices of $f$ and $Y$. For example, \cite{yi2015optimal} study consistent estimation under the model $\PP(Y = 1 | \bX) = \{f(\bbeta\T \bX) + 1\}/2$ with binary $Y$, where $f: \RR \mapsto [-1,1]$. \cite{yang2015sparse} consider the model $Y = f(\bX\T\bbeta) + \varepsilon$ with known $f$, and develop estimation and inferential procedures based on the $L_1$ regularized least squares loss. 

With $p$ potentially growing with $n$ exponentially and under a general SIM, \cite{radchenko2015high} proposed a non-parametric least squares with an equality $L_1$ constraint to handle simultaneous estimation of $\bbeta$ as well as $f$. The support recovery properties of this procedure are not investigated, and in addition the results do not exhibit the optimal scaling of the triple $(n,p,s)$. \cite{han2015provable} suggest a penalized approach, in which they use a loss function related to Kendall's tau correlation coefficient. They also establish the $L_2$ consistency for the coefficient $\bbeta$ but do not consider support recovery. 
\cite{neykov2015signed} analyzed two algorithms based on Sliced Inverse Regression \citep{li1991sliced} under the assumption that $\bX \sim \cN(0, \eye_{p \times p})$, and demonstrated that they can uncover the support optimally in terms of the rescaled sample size.  
\cite{versh2015thegeneralized} and \cite{hassibi2015thelasso} demonstrated that a constrained version of LASSO can be used to obtain an $L_2$ consistent estimator of $\bbeta_0$. None of these procedures provide results on the performance of the LASSO algorithm in support recovery, which relates to $L_2$ consistency but is a fundamentally different theoretical aspect. In addition, no existing work on the SIM estimation procedures demonstrates that the performance depends on $(n,p,s)$ only through the effective sample size $n_{p,s}$. 

\subsection{Organization}

The rest of the paper is organized as follows. Our main results are formulated in section \ref{main:results:sec}. In particular, we show  results on the covariance screening algorithm when $\bSigma = \Ibb_{p \times p}$ in section \ref{covariancescreening}  and our main result on the LASSO support recovery in section \ref{cov:struct:section}. Proof for the main results are given in section \ref{proof:main:result}. In addition we demonstrate that for a class of SIMs, any algorithm provably fails to recover the support, unless the rescaled sample size $n_{p,s}$ is large enough. Numerical studies, confirming our main result are shown in section \ref{simulations:sec}. We discuss potential future directions in section \ref{discussion:sec}. Technical proofs are deferred to the appendixes.

\section{Main Results} \label{main:results:sec}

In this section we formulate our main results, which include the analysis of a simple covariance screening algorithm, and the LASSO algorithm for SIMs. Before we move on to the algorithms we summarize notation that we use throughout the paper and discuss several useful definitions and preliminary results.

\subsection{Preliminary and Notation}

For a (sparse) vector $\vb = (v_1, \ldots, v_p)\T$, we let $S(\vb) := \supp(\vb) = \{j : v_j \neq 0\}$ denote its support,  $S_{\pm}(\vb) := \{(\sign(v_j),j) : v_j \neq 0\}$ be its signed support, $\|\vb\|_p$ denote the $L_p$ norm, 
$\|\vb\|_0=|\supp(\vb)|$, and $\vb^{\min} = \min_{i \in \supp(\vb)} |v_i|$.
For a real random variable $X$, define
$$
\|X\|_{\psi_2} = \sup_{p \geq 1} p^{-1/2} (\e|X|^p)^{1/p}, ~~~ \|X\|_{\psi_1} = \sup_{p \geq 1} p^{-1} (\e|X|^p)^{1/p}.\footnotemark
$$
 \footnotetext{There are multiple equivalent (up to universal constants) definitions of the so-called Orlicz or $\psi$ norms. See \cite{vershynin2010introduction} Lemma 5.5 for a succinct formal treatment of this.}Recall that a random variable is called \textit{sub-Gaussian} if $\|X\|_{\psi_2} < \infty$  and \textit{sub-exponential} if $\|X\|_{\psi_1} < \infty$. For any integer $k \in \NN$ we use the shorthand notation $[k] = \{1,\ldots,k\}$. For a matrix $\Mb \in \mathbb{R}^{d_1 \times d_2}$, sets $S_1, S_2 \subseteq [d]$, we let $\Mb_{,S_2} = [M_{ij}]_{i\in[d_1]}^{j \in S_2}$ and $\Mb_{S_1, S_2} = [M_{ij}]_{i \in S_1}^{j \in S_2}$. For a vector $\bZ = (Z_1, \ldots, Z_d)\T$ and set $S \subseteq [d]$, $\bZ_{S}$ denotes the subvector corresponding to the set $S$. Furthermore, let $\|\Mb\|_{p,q} = \sup_{\|\vb\|_p = 1} \|\Mb \vb\|_q$. In particular, we have
$
\|\Mb\|_{2,2} = \max_{i \in [\max(d_1,d_2)]}\{s_i(\Mb)\},
$ 
where $s_i(\Mb)$ is the $i$\textsuperscript{th} singular value of $\Mb$, and $ \|\Mb\|_{\infty,\infty} = \max_{i \in [d_1]} \sum_{j =1}^{d_2} |M_{ij}|$. 
For a matrix $\Mb \in \RR^{d \times d}$, we put $D_{\max}(\Mb) = \max_{i \in [d]}|M_{ii}|$ for its maximal diagonal element, and $\diag(\Mb) = [M_{ii}]_{i \in [d]}$ for the collection of diagonal entries of $\Mb$.
We also use standard asymptotic notations. Given two sequences $\{a_n\}, \{b_n\}$ we write $a_n = O(b_n)$ if there exists a constant $C < \infty$ such that $a_n \leq C b_n$; $a_n = \Omega(b_n)$ if there exists a positive constant $c> 0$ such that $a_n \geq c b_n$, $a_n = o(b_n)$ if $a_n/b_n \rightarrow 0$, and $a_n \asymp b_n$ if there exists positive constants $c$ and $C$ such that $c < a_n/b_n < C$. Throughout, we also assume that there exists a constant $0 < \iota < 1$ such that $s < p - p^{\iota}$, which implies that $\frac{\log(p-s)}{\log(p)}\geq \iota$.

We assume that data for analysis consists of $n$ independent and identically distributed (i.i.d.) random vectors $\Dsc=\{(Y_i, \bX_i\T)\T, i = 1, \ldots, n\}$ and we focus primarily on $\bX \sim \cN(0,\bSigma)$. In matrix form, we let $\Xbb = [\bX_1, \ldots, \bX_n]_{n \times p}\T = [X_{ij}]_{i \in [n]}^{j \in [p]}$, $\bY = (Y_1, \ldots, Y_n)\T$, and $\bvarepsilon = (\varepsilon_1, \ldots, \varepsilon_n)\T$. The recovery of $\bbeta_0$ under SIM often relies on the linearity of expectation assumption
given in \cite{li1989regression} and \cite{li1991sliced}: 
\begin{definition}[Linearity of Expectation] A $p$-dimensional random variable $\bX$ is said to satisfy linearity of expectation in the direction $\bbeta$ if for any direction $\bb \in \RR^p$:
$$
\e[\bX\T \bb | \bX\T \bbeta] = c_{\bb} \bX\T \bbeta + a_{\bb},
$$
where $a_{\bb}, c_{\bb} \in \RR$ are some real constants which might depend on the direction $\bb$.
\end{definition}
\begin{remark} Note that if additionally $\e[\bX] = 0$, then by taking expectation it is evident that $\ba_{\bb} \equiv 0$. Clearly, linearity of expectation is direction specific by definition. Elliptical distributions \citep{fang1990symmetric} including multiviariate normal are known to satisfy the linearity in expectation uniformly in all directions \cite[e.g.]{cambanis1981theory}. 
\end{remark}

Next, we record a simple but very useful observation, which forms the basis of our work. From Theorem 2.1 of \cite{li1989regression}, we note that under a SIM and normality of $\bX$ with covariance $\bSigma > 0$, 
$$\argmin_{\bb} \e(Y - \bb\T \bX)^2 = c_0 \bbeta_0,$$ 
for some $c_0 \in \mathbb{R}$. More generally, we have

\begin{lemma} \label{proplemma} Assume that the SIM (\ref{SIM}) holds, $\bSigma = \e(\bX\bX\T) >0$, and $\bX$ satisfies the linearity in expectation condition in the direction $\bbeta_0$ such that $\e[(\bX\T \bbeta_0)^2] > 0$.  
Then we have $\bSigma^{-1}\e(Y \bX) = c_0  \bbeta_0$, where $c_0 :=  \e(Y\bX\T\bbeta_0)$. Obviously, $\e(Y \bX) = c_0 \bbeta_0$ when $\bSigma = \Ibb_{p \times p}$.
\end{lemma}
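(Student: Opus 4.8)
The plan is to establish the identity $\bSigma^{-1}\e(Y\bX) = c_0 \bbeta_0$ by directly computing $\e(Y\bX)$ and exploiting the conditional expectation structure afforded by the linearity of expectation assumption. The core idea is that $Y = f(\bX\T\bbeta_0, \varepsilon)$ depends on $\bX$ only through the scalar projection $\bX\T\bbeta_0$, so after conditioning on this projection, the randomness in $Y$ decouples from the directional information in $\bX$. I would start by writing, for the $j$-th coordinate, $\e(Y X_j) = \e[\e(Y X_j \mid \bX\T\bbeta_0, \varepsilon)]$, using the tower property with the conditioning sigma-field generated by $(\bX\T\bbeta_0, \varepsilon)$.

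The key step is to apply linearity of expectation to each coordinate. Taking $\bb = \eb_j$ (the $j$-th standard basis vector), the assumption gives $\e[X_j \mid \bX\T\bbeta_0] = c_{\eb_j}\, \bX\T\bbeta_0 + a_{\eb_j}$, and since $\e(\bX)=0$ forces $a_{\eb_j} = 0$ (as noted in the remark following the definition). Because $\varepsilon \independent \bX$, conditioning additionally on $\varepsilon$ does not change this conditional expectation, so $\e[\bX \mid \bX\T\bbeta_0, \varepsilon] = \bc\, (\bX\T\bbeta_0)$ where $\bc = (c_{\eb_1},\ldots,c_{\eb_p})\T$ collects the per-coordinate constants. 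Given $(\bX\T\bbeta_0,\varepsilon)$, the response $Y = f(\bX\T\bbeta_0,\varepsilon)$ is deterministic, hence
\begin{align*}
\e(Y\bX) &= \e\bigl[\e(Y\bX \mid \bX\T\bbeta_0, \varepsilon)\bigr] = \e\bigl[Y\, \e(\bX \mid \bX\T\bbeta_0, \varepsilon)\bigr] = \e\bigl[Y\, (\bX\T\bbeta_0)\bigr]\, \bc.
\end{align*}
This already shows $\e(Y\bX)$ is proportional to the fixed vector $\bc$.

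It remains to identify $\bc$ with $\bSigma\bbeta_0$ up to scale, which pins down the constants. To do this I would determine $\bc$ by multiplying the linearity identity $\e[\bX \mid \bX\T\bbeta_0] = \bc\,(\bX\T\bbeta_0)$ on the left by $\bX\T$ relevant covariances: taking $\cov(\bX, \bX\T\bbeta_0) = \e[\bX\,(\bX\T\bbeta_0)] = \bSigma\bbeta_0$ on the one hand, and on the other hand $\e\bigl[\e(\bX\mid\bX\T\bbeta_0)\,(\bX\T\bbeta_0)\bigr] = \bc\,\e[(\bX\T\bbeta_0)^2]$, we obtain $\bc = \bSigma\bbeta_0 / \e[(\bX\T\bbeta_0)^2]$, which is well-defined since $\e[(\bX\T\bbeta_0)^2] > 0$. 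Substituting back yields
\begin{align*}
\e(Y\bX) = \frac{\e[Y(\bX\T\bbeta_0)]}{\e[(\bX\T\bbeta_0)^2]}\,\bSigma\bbeta_0,
\end{align*}
so that $\bSigma^{-1}\e(Y\bX) = c_0\,\bbeta_0$ with the scalar $c_0 = \e[Y(\bX\T\bbeta_0)]/\e[(\bX\T\bbeta_0)^2]$. Since $\bbeta_0\T\bSigma\bbeta_0 = 1$ by the identifiability normalization, one checks $\e[(\bX\T\bbeta_0)^2] = \bbeta_0\T\bSigma\bbeta_0 = 1$, collapsing the constant to $c_0 = \e(Y\,\bX\T\bbeta_0)$ exactly as stated. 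The $\bSigma = \Ibb_{p\times p}$ case is then immediate.

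The main obstacle, though it is conceptual rather than technical, is being careful about the role of $\varepsilon$ in the conditioning: one must verify that the linearity of expectation condition, stated only with respect to $\bX\T\bbeta_0$, survives the additional conditioning on $\varepsilon$. This is precisely where the independence $\varepsilon \independent \bX$ from the SIM definition (\ref{SIM}) is essential, since it guarantees $\e[\bX \mid \bX\T\bbeta_0, \varepsilon] = \e[\bX \mid \bX\T\bbeta_0]$. Once this decoupling is granted, the remainder is a routine moment computation.
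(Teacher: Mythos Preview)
Your argument is correct and rests on the same core ingredients as the paper's proof---the linearity of expectation assumption and the tower property---but the bookkeeping differs in two small ways. The paper first treats the case $\bSigma = \Ibb_{p\times p}$, argues via the orthogonal complement (showing $\e[Y\bX\T\bb]=0$ for every $\bb\perp\bbeta_0$ by factoring $\e[Y\bX\T\bb\mid\bX\T\bbeta_0]=\e[Y\mid\bX\T\bbeta_0]\,\e[\bX\T\bb\mid\bX\T\bbeta_0]$), and only then reduces the general $\bSigma$ case to the identity case via the substitution $\bX\mapsto\bSigma^{-1/2}\bX$. You instead compute the coordinate vector $\bc$ directly and identify it with $\bSigma\bbeta_0/\e[(\bX\T\bbeta_0)^2]$, handling general $\bSigma$ in one pass. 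Your conditioning on $(\bX\T\bbeta_0,\varepsilon)$ is also slightly more explicit than the paper's conditioning on $\bX\T\bbeta_0$ alone: the paper's factorization $\e[Y\bX\T\bb\mid\bX\T\bbeta_0]=\e[Y\mid\bX\T\bbeta_0]\,\e[\bX\T\bb\mid\bX\T\bbeta_0]$ relies on the same conditional independence you invoke, but your version makes the role of $\varepsilon\independent\bX$ visible rather than implicit. Neither route is materially shorter or more general; they are two arrangements of the same computation.
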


In view of  Lemma \ref{proplemma}, under sparsity assumptions, an $L_1$ regularized least square estimator can recover $\bbeta_0$ proportionally and hence the support of $\bbeta_0$.  Furthermore, when $\bSigma = \Ibb_{p \times p}$, the covariance $\e(Y\bX)$ can be directly used to recover $\bbeta_0$. It is noteworthy to remark that in the special case $\bX \sim \cN(0,\bSigma)$, a simple application of Stein's Lemma \citep{stein1981estimation} can help quantify the constant $c_0$ precisely:
$$
c_0 = \EE(Y \bX\T \bbeta_0) = \EE (Z \underbrace{[\EE f(Z, \varepsilon) | Z]}_{\varphi(Z)}) = \int D \varphi(z) \frac{\exp(-z^2/2)}{\sqrt{2\pi}} dz= \EE D \varphi(Z),
$$
where $D \varphi$ is the distributional derivative of $\varphi$, $Z \sim \cN(0,1)$ and we abused the notation slightly in the last equality for simplicity.

The remaining of this section is structured as follows --- in section \ref{covariancescreening} we study a simple covariance thresholding algorithm, which is a manifestation of program (\ref{LASSOproblem:intro}) under the assumption $\bSigma = \eye_{p \times p}$. In section \ref{cov:struct:section} we consider the full-fledged least squares LASSO algorithm (\ref{LASSOproblem:intro}) with a general covariance matrix $\bSigma$. Throughout, we assume $E(\bX) = 0$ and let $\sigma^2 = E(Y^2)$, $\eta = \Var(Y^2)$, 
$$
c_0 =  \e(Y\bX\T\bbeta_0), \ \gamma = \Var(Y\bX\T\bbeta_0) , \
\xi^2 = \e\{(Y - c_0 \bX\T\bbeta_0)^2\}, \ \mbox{and}\ \theta^2 =  \var\{(Y - c_0 \bX\T\bbeta_0)^2\} . 
$$
In addition, to simplify the presentation we will assume that the above constants are not scaling with $(n,p,s)$, and belong to a compact set which is bounded away from $0$. 

\subsection[]{Covariance Screening under $\bSigma = \Ibb_{p \times p}$} \label{covariancescreening}

In this section, we propose a simple covariance screening procedure for signed support recovery of $\bbeta_0$ under a SIM with $\bSigma = \Ibb_{p \times p}$ , which relates to the sure independence screening procedures  \citep{fan2008sure, fan2010sure} under the linear model. Note that the LASSO procedure  (\ref{LASSOproblem:intro}) can equivalently be expressed as:
$$
\widehat \bbeta = \argmin_{\smbbeta \in \RR^p} \left\{ - \frac{1}{n} \sum_{i = 1}^n Y_i \bX_i\T \bbeta + \frac{1}{2n} \bbeta\T \sum_{i = 1}^n  \bX_i \bX_i\T \bbeta + \lambda \|\bbeta\|_1 \right\}.
$$
Under the assumption $\bSigma = \eye_{p \times p}$, we replace $\frac{1}{n}\sum_{i = 1}^n  \bX_i \bX_i\T$ with $\Ibb_{p \times p}$ and instead consider
$$
\widehat \bbeta = \argmin_{\smbbeta \in \RR^p} \left\{- \frac{1}{n} \sum_{i = 1}^n Y_i \bX_i\T \bbeta + \frac{1}{2} \bbeta\T \bbeta + \lambda \|\bbeta\|_1 \right\}.
$$
It is well known  that the solution to the above program takes the form:
$$
\hat \beta_j = \sign\Big(n^{-1}\sum_{i = 1}^n Y_i X_{ij}\Big)\Big(\Big|n^{-1}\sum_{i = 1}^n Y_i X_{ij}\Big| - \lambda\Big)_+,
$$
where $x_+=x \mathds{1}(x \ge 0)$. Hence, in this special case, the regularization parameter can be equivalently interpreted as a thresholding parameter, filtering all small $| n^{-1}\sum_{i = 1}^n Y_i X_{ij}|$. Motivated by this, we consider the following
simple covariance screening procedure, which acts as a filter taking covariances and their corresponding signs, only if they pass a critical threshold: 

\begin{algorithm}[H] \label{Cov:screening}
\SetKwInOut{Input}{input}
\Input{$(Y_i, \bX_i)_{i = 1}^n$: data; tuning parameter $\nu > 0$}
\begin{enumerate}
\item Calcluate $\bV := \hat \cov(Y, \bX) := n^{-1}\sum_{i = 1}^n Y_i \bX_i$, 
\item Set $\widehat S := \Big\{V_j : |V_j| > \nu \sqrt{\frac{\log p}{n}}\Big\}$;
\item Output the set $\{(\sign(V_j), j): |V_j| \in \widehat S\}$.
\end{enumerate}
\caption{Covariance Screening Algorithm}
\end{algorithm}


The following proposition shows that the algorithm recovers the support with probability approaching 1 provided that the effective sample size $n_{p,s}$ is sufficiently large under the normality assumption of $\bX \sim \cN(0, \Ibb_{p \times p})$. 
\begin{proposition} \label{propnormal} 
Assume that $\bX \sim \cN(0, \Ibb_{p \times p})$, $c_0 \ne 0$, $E(Y^4) < \infty$, and $\beta_{0j} \in \{\pm\frac{1}{\sqrt{s}}, 0\}$ for all $j \in [p]$ and some $s \in \mathbb{N}$. Set $\nu = \frac{1}{\sqrt{s}} + 2 \sqrt{2} \sigma$. Then as long as 
$$n_{p,s} \geq \Upsilon,$$
for a large enough $\Upsilon = O(1)$ (depending on $c_0, \sigma^2$) Algorithm \ref{Cov:screening} recovers the signed support of $c_0\bbeta_0$ with asymptotic probability $1$.
\end{proposition}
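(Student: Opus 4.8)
The plan is to show that each empirical covariance $V_j=n^{-1}\sum_{i=1}^n Y_iX_{ij}$ concentrates tightly around its population value $\e(YX_j)=c_0\beta_{0j}$ (which equals $c_0\beta_{0j}$ by Lemma \ref{proplemma} since $\bSigma=\Ibb$), so that the threshold $\nu\sqrt{\log p/n}$ separates the signal coordinates, where $|c_0\beta_{0j}|=|c_0|/\sqrt s$, from the null coordinates, where the mean is $0$. The crucial device, which lets me avoid assuming anything beyond $\e(Y^4)<\infty$, is to condition on the response vector $\bY$ and exploit the Gaussianity of $\bX$. Writing $U_i=\bX_i\T\bbeta_0$, Gaussian conditioning yields for every $j$ the exact representation $X_{ij}=\beta_{0j}U_i+\sqrt{1-\beta_{0j}^2}\,\widetilde Z_{ij}$ with $\widetilde Z_{ij}\sim\cN(0,1)$ independent of $(U_i,\varepsilon_i)$, hence of $Y_i$. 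Summing gives $V_j=\beta_{0j}R_n+\sqrt{1-\beta_{0j}^2}\,G_j$, where $R_n=n^{-1}\sum_iY_iU_i$ and $G_j=n^{-1}\sum_iY_i\widetilde Z_{ij}$. Since $(\widetilde Z_{ij})_i$ is independent of $\bY$, conditionally on $\bY$ we have the exact law $G_j\mid\bY\sim\cN(0,W_n/n)$ with $W_n=n^{-1}\sum_iY_i^2$.

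Next I isolate two \emph{global} high-probability events requiring no union bound. First, $R_n\to c_0$: since $\gamma=\var(YU)<\infty$ (finite by Cauchy--Schwarz from $\e(Y^4)<\infty$ and Gaussianity of $U$), Chebyshev gives $\p(|R_n-c_0|>|c_0|/2)\le 4\gamma/(nc_0^2)\to0$. Second, $W_n$ stays bounded: since $\eta=\var(Y^2)<\infty$, Chebyshev gives $\p(W_n>2\sigma^2)\le\eta/(n\sigma^4)\to0$. Both vanish because $n=n_{p,s}\,s\log(p-s)\to\infty$. Let $\mathcal E$ be their intersection; on $\mathcal E$ one has simultaneously $|R_n|\ge|c_0|/2$ with $\sign(R_n)=\sign(c_0)$ and $W_n\le2\sigma^2$.

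Working on $\mathcal E$ and conditioning on $\bY$, I treat the two coordinate types through the Gaussian tail of $G_j$, writing $\bar\Phi(x)=\p(\cN(0,1)>x)$. For a null coordinate $j\notin S$ we have $\beta_{0j}=0$, so $V_j=G_j\sim\cN(0,W_n/n)$; with threshold $t=\nu\sqrt{\log p/n}$ and $W_n\le2\sigma^2$, the standardized threshold is at least $\nu\sqrt{\log p}/(\sqrt2\sigma)\ge2\sqrt{\log p}$ (using $\nu\ge2\sqrt2\sigma$), so $\p(|V_j|>t\mid\bY)\le2\bar\Phi(2\sqrt{\log p})\le p^{-2}$, and a union bound over the $p-s$ null coordinates contributes $O(1/p)\to0$. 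For a signal coordinate $j\in S$ the term $\beta_{0j}R_n$ has magnitude $\ge|c_0|/(2\sqrt s)$ and the correct sign; since for $n_{p,s}\ge\Upsilon$ the threshold obeys $t\le|c_0|/(4\sqrt s)$, the coordinate is correctly recovered whenever $|G_j|<|c_0|/(4\sqrt s)$. Using $W_n\le2\sigma^2$ and $n/s=n_{p,s}\log(p-s)$, the Gaussian tail gives $\p(|G_j|\ge|c_0|/(4\sqrt s)\mid\bY)\le(p-s)^{-\kappa n_{p,s}}$ with $\kappa\asymp c_0^2/\sigma^2$. A union bound over the $s\le p$ signal coordinates is then at most $p\,(p-s)^{-\kappa n_{p,s}}\le(p-s)^{1/\iota-\kappa n_{p,s}}$, using $p\le(p-s)^{1/\iota}$, which vanishes once $\kappa\Upsilon>1/\iota$; this pins down $\Upsilon=O(1)$ depending on $c_0,\sigma^2$ (and $\iota$). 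Combining the three vanishing contributions via a final union bound finishes the argument.

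I expect the signal coordinates to be the crux. A direct Chebyshev bound on $V_j$ around $c_0\beta_{0j}$ combined with a union bound over the $s$ active coordinates would require $s=o(n_{p,s}\log(p-s))$, far too restrictive, since the finite fourth moment alone gives only polynomial tails. The Gaussian-conditioning decomposition is exactly what rescues the argument: after peeling off the common term $\beta_{0j}R_n$, the residual fluctuation $G_j$ is \emph{exactly} Gaussian conditionally on $\bY$ with standard deviation $\asymp n^{-1/2}$, so the required deviation of order $1/\sqrt s$ amounts to $\asymp\sqrt{n_{p,s}\log(p-s)}$ standard deviations and produces the sub-polynomial factor $(p-s)^{-\kappa n_{p,s}}$ needed to survive the union bound.
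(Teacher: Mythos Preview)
Your proof is correct and follows essentially the same route as the paper: both decompose $X_{ij}$ into its projection onto $\bbeta_0$ and the orthogonal complement, condition on $\bY$ to make the orthogonal part exactly Gaussian with variance governed by $W_n=n^{-1}\sum_iY_i^2$, and control $W_n$ and the parallel term $R_n=n^{-1}\sum_iY_iU_i$ via Chebyshev using only $\e(Y^4)<\infty$. The one minor but pleasant difference is that by treating signal and null coordinates separately you only need $|R_n-c_0|\le|c_0|/2$, whereas the paper's packaging (its Lemma~\ref{subgaussboundnormal}, a uniform bound $\|V-c_0\bbeta_0\|_\infty\le\nu\sqrt{\log p/n}$) forces $|R_n-c_0|\lesssim\sqrt{\log p/n}$ and thus incurs the slower $O((\log p)^{-1})$ probability cost instead of your $O(1/n)$.
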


The proof of Proposition \ref{propnormal} can be found in Appendix \ref{cov:tresh:app}. We note that this result does not require $Y$ to be sub-Gaussian. When sub-Gaussianity is assumed, the convergence rate of the probability approaching 1 can be improved. 
Furthermore, under sub-Gaussianity of $Y$, we can relax the normality assumption of $\bX$. Specifically, one may instead consider the following assumptions
\begin{assumption}[Spherical Distribution of $\bX$ and sub-Gaussian of $Y$]\label{spherical:sub:gauss}
Let $\bX$ be a spherically distributed $p$ dimensional random variable with $\e[\bX] = 0, \var[\bX] = \Ibb_{p \times p}$, whose moment generating function exists, and takes the form $\EE\{ \exp(\bt\T\bX) \}\equiv \Psi(\bt\T \bt), \bt \in \RR^p$, and in addition $\Psi : \RR \mapsto \RR$ is such that $\Psi(t) \leq \exp(Ct)$ for some $C > 0$ for all $t \in \RR^+$. In addition, we assume that $Y$ is sub-Gaussian, i.e. $\|Y\|_{\psi_2} \leq K_Y$.  
\end{assumption}
Note that \citep[see Lemma 5.5. e.g.]{vershynin2010introduction} assumption \ref{spherical:sub:gauss} implies $\max_{j \in [p]} \|X_j\|_{\psi_2} < \infty$. Let $K:=\max_{j \in [p]} \|X_j\|_{\psi_2}$. In parallel to Proposition \ref{propnormal}, we have
\begin{proposition}\label{covthresh} In addition to Assumption \ref{spherical:sub:gauss}, we assume that s, $c_0 \ne 0$   and $\beta_{0j} \in \{\pm\frac{1}{\sqrt{s}}, 0\}$ for all $j \in [p]$ and some $s \in \mathbb{N}$. Set the tuning parameter $\nu = \omega K_YK$ for some absolute constant $\omega > 0$. Then there exits an absolute constant $\Upsilon \in \RR$ depending on $c_0, C, K$ and $K_Y$ such that if:
$$
n_{p,s} \geq \Upsilon,
$$
Algorithm \ref{Cov:screening} recovers the signed support of $c_0 \bbeta_0$ with asymptotic probability $1$.
\end{proposition}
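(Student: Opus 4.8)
The plan is to reduce the claim to a coordinatewise concentration argument for the empirical covariances $V_j = n^{-1}\sum_{i=1}^n Y_i X_{ij}$. Since $\bSigma = \Ibb_{p\times p}$, Lemma~\ref{proplemma} gives $\e V_j = c_0\beta_{0j}$, so that $|\e V_j| = |c_0|/\sqrt{s}$ for $j\in S(\bbeta_0)$ and $\e V_j = 0$ otherwise. Writing $\tau := \nu\sqrt{\log p/n} = \omega K_Y K\sqrt{\log p/n}$ for the threshold, exact signed-support recovery is implied by the single event that (i) $|V_j|\le\tau$ for every null coordinate $j\notin S(\bbeta_0)$, and (ii) $|V_j - \e V_j| < |c_0|/(2\sqrt{s})$ for every signal coordinate $j\in S(\bbeta_0)$; on the latter event $|V_j|\ge|c_0|/(2\sqrt s)$ and $\sign(V_j)=\sign(c_0\beta_{0j})$, provided we also verify $\tau < |c_0|/(2\sqrt s)$. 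First I would record that, since $Y$ and $X_j$ are sub-Gaussian with $\|Y\|_{\psi_2}\le K_Y$ and $\|X_j\|_{\psi_2}\le K$, each product $Y_i X_{ij}$ is sub-exponential with $\|Y_i X_{ij}\|_{\psi_1}\le K_Y K$ (up to an absolute constant), which puts Bernstein's inequality at our disposal: for i.i.d.\ centered sub-exponential summands of $\psi_1$-norm at most $L$,
\[
\p\!\left(\left| n^{-1}\sum_{i=1}^n Z_i \right| > t\right) \le 2\exp\!\left(-c\,n\min\!\left(\frac{t^2}{L^2},\frac{t}{L}\right)\right),
\]
with $L \asymp K_Y K$.

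For the null coordinates I would apply this with $t=\tau$ and $\e V_j=0$. Since $n_{p,s}\ge\Upsilon$ forces $\tau = o(1)$, we are in the quadratic (sub-Gaussian) regime of the minimum, so $\p(|V_j|>\tau)\le 2\exp(-c\omega^2\log p) = 2p^{-c\omega^2}$. A union bound over the $p-s$ null indices yields $\p(\exists\,j\notin S(\bbeta_0):\,|V_j|>\tau)\le 2(p-s)\,p^{-c\omega^2}\le 2p^{1-c\omega^2}$, which tends to $0$ once the absolute constant $\omega$ is chosen so that $c\omega^2>1$.

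For the signal coordinates I would exploit $n_{p,s}\ge\Upsilon$ twice. Combining $n\ge\Upsilon s\log(p-s)$ with the standing relation $\log p\le\iota^{-1}\log(p-s)$ gives $\sqrt{\log p/n}\le(\iota\Upsilon s)^{-1/2}$, so that $\tau\le \frac{\omega K_Y K}{\sqrt{\iota\Upsilon}}\,s^{-1/2}<\frac{|c_0|}{2\sqrt s}$ as soon as $\Upsilon\ge 4\omega^2K_Y^2K^2/(\iota c_0^2)$, establishing the required $\tau<|c_0|/(2\sqrt s)$. Applying Bernstein with $t=|c_0|/(2\sqrt s)$ (again in the quadratic regime, since $t/L$ is bounded) gives an exponent $c\,n\,t^2/L^2 \asymp (c_0^2/(K_Y^2K^2))\,(n/s)$, and $n/s\ge\Upsilon\log(p-s)$ converts this into the bound $2(p-s)^{-\kappa\Upsilon}$ with $\kappa\asymp c_0^2/(K_Y^2K^2)$. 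A union bound over the $s$ signal indices, together with $s\le p\le(p-s)^{1/\iota}$, produces $\p(\exists\,j\in S(\bbeta_0):\,|V_j-\e V_j|\ge|c_0|/(2\sqrt s))\le 2s\,(p-s)^{-\kappa\Upsilon}\le 2(p-s)^{1/\iota-\kappa\Upsilon}\to 0$ for $\Upsilon>1/(\iota\kappa)$. Intersecting the two good events completes the argument.

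The main obstacle is not any single estimate but the bookkeeping that makes every failure probability vanish \emph{through} the effective sample size $n_{p,s}$ alone: one must translate $n_{p,s}\ge\Upsilon$ into the simultaneous facts that the threshold $\tau$ sits strictly below the signal gap $|c_0|/(2\sqrt s)$, that both Bernstein applications remain in the quadratic regime, and that the exponents beat the respective union-bound cardinalities $p-s$ and $s$ after the two are reconciled via the constant $\iota$ (through $\log p\le\iota^{-1}\log(p-s)$ and $s\le(p-s)^{1/\iota}$). Choosing $\Upsilon$ as the maximum of the finitely many thresholds on the constants $c_0, C, K, K_Y, \omega, \iota$ arising above then delivers the claim.
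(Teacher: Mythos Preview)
Your proposal is correct and follows essentially the same route as the paper: both arguments observe that $\|Y_iX_{ij}\|_{\psi_1}\lesssim K_YK$ and invoke Bernstein's inequality for sub-exponential sums (the paper cites Proposition~5.16 of \cite{vershynin2010introduction}) to control $|V_j-\e V_j|$ uniformly, then convert the sample-size condition $n_{p,s}\ge\Upsilon$ into a separation between signal and null coordinates via the relation $\log p\le\iota^{-1}\log(p-s)$. The only cosmetic difference is that the paper runs a single union bound over all $p$ coordinates at one deviation level and then reads off the gap, whereas you split the analysis into null and signal indices with two Bernstein applications at different levels $t=\tau$ and $t=|c_0|/(2\sqrt s)$; the content is the same.
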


\subsection[]{LASSO Algorithm with General $\bSigma$} \label{cov:struct:section}
In this section we consider the LASSO algorithm (\ref{LASSOproblem:intro}), 
\begin{align} \label{LASSOproblem}
\widehat \bbeta = \argmin_{\smbbeta \in \RR^p} \frac{1}{2n} \|\bY - \Xbb \bbeta\|_2^2 + \lambda \|\bbeta\|_1,
\end{align}
under the assumption $\bX \sim \cN(0, \bSigma)$ with a generic and unknown covariance matrix $\bSigma$. Under certain sufficient conditions our goal is to show that (\ref{LASSOproblem:intro}) recovers the support of  $\bbeta_0$ with asymptotic probability $1$ in optimal (up to a scalar) effective sample size. In contrast to section \ref{covariancescreening}, we also no longer require each of the signals of $\bbeta$ to be of the same magnitude. 

We first summarize a primal dual witness (PDW) construction which we borrow from \cite{wainwright2009sharp}. The PDW construction lays out steps allowing one to prove sign consistency for $L_1$ constrained quadratic programming (\ref{LASSOproblem}). We will only provide the sufficient conditions to show sign-consistency, and the interested reader can check \cite{wainwright2009sharp} for the necessary conditions. We note that the validity of the PDW construction is generic, in that it does not rely on the distribution of the residual $\bfw = \bY - c_0 \Xbb \bbeta_0$, and hence extends to the current framework. Note that unlike the linear regression case, in our setting $\bfw$ does not necessarily have mean 0 although $\e [\Xbb\T \bfw] = 0$. 

Recall that a vector $\zb$ is a subgradient of the $L_1$ norm evaluated at a vector $\vb \in \RR^p$ (i.e. $\zb \in \partial \|\vb\|_1$) if we have $z_j = \sign(v_j), v_j \neq 0$ and $z^j \in [-1,1]$ otherwise. It follows from Karush-Kuhn-Tucker's theorem that a vector $\widehat \bbeta \in \RR^p$ is optimal for the LASSO problem (\ref{LASSOproblem}) iff there exists a subgradient $\widehat \zb \in \partial \|\widehat \bbeta\|_1$ such that:
\begin{align} \label{KKT}
\frac{1}{n} \Xbb\T \Xbb (\widehat \bbeta - c_0 \bbeta_0) - \frac{1}{n} \Xbb\T \bfw+ \lambda \widehat \zb = 0.
\end{align}
Put $S_0 := S(\bbeta_0)$ for brevity. In what follows we will assume that the matrix $\Xbb_{,S_0}\T \Xbb_{,S_0}$ is invertible (which it is with probability $1$), even though this is not required by the PDW. The PDW method constructs a pair $(\check \bbeta, \check \zb) \in \RR^p \times \RR^p$ by following the steps:
\begin{itemize}
\item Solve:
$$
\check \bbeta_{S_0} = \argmin_{\smbbeta_{S_0} \in \RR^s} \frac{1}{2n} \|\bY - \Xbb_{,S_0}\bbeta_{S_0}\|_2^2 + \lambda \|\bbeta_{S_0}\|_1,
$$
where $s = |S_0|$. This solution is unique under the invertibility of $\Xbb_{,S_0}\T \Xbb_{,S_0}$ since in this case the function is strictly convex. Set $\check \bbeta_{S_0^c} = 0$. 
\item Choose a $\check \zb_{S_0} \in \partial \|\check \bbeta_{S_0}\|_1$.
\item For $j \in S_0^c$ set $Z_j := \Xbb_{,{j}}\T [\Xbb_{,S_0} (\Xbb_{,S_0}\T \Xbb_{,S_0})^{-1} \check \zb_{S_0} + \Pb_{\Xbb_{,S_0}^{\perp}}\left(\frac{\bfw}{\lambda n}\right)]$\footnote{$Z_j$ are derived by simply plugging in $\check \bbeta$ and $\check \zb_{S_0}$ and solving  (\ref{KKT}) for $\check \zb_{S_0^c}$.}, where $\Pb_{\Xbb_{,S_0}^{\perp}} = \Ibb - \Xbb_{,S_0} (\Xbb_{,S_0}\T \Xbb_{,S_0})^{-1}\Xbb_{,S_0}\T$ is an orthogonal projection. Checking that $|Z_j| < 1$ for all $j \in S_0^c$ ensures that there is a unique solution $\check \bbeta = (\check \bbeta_{S_0}\T, \check \bbeta\T_{S_0^c})\T$ satisfying $S(\check \bbeta) \subseteq S(c_0\bbeta_0)$. 
\item To check sign consistency we need $\check \zb_{S_0} = \sign(c_0 \bbeta_{0S_0})$. For each $j \in S_0$, define:
$$
\Delta_j:= \eb_j\T (n^{-1} \Xbb_{,S_0}\T \Xbb_{,S_0})^{-1} \left[n^{-1}\Xbb_{,S_0}\T \bfw - \lambda \sign(c_0 \bbeta_{0S_0})\right], \footnote{$\Delta_j$ can be seen to equal $\check \beta_j - c_0 \beta_{0j}$ for $j \in S_0$, when $\check \zb_{S_0} = \sign(c_0 \bbeta_0)$.}
$$
where $\eb_j \in \RR^s$ is a canonical unit vector with 1 at the $j$\textsuperscript{th} position. Checking $\check \zb_{S_0} = \sign(c_0 \bbeta_{0S_0})$ is equivalent to checking:
$$
\sign(c_0 \beta_{0j} + \Delta_j) = \sign(c_0\beta_{0j}), ~ \forall j \in S_0.
$$
\end{itemize}
To this end we require several restrictions on $\bSigma$ and moment conditions on $Y$. We partition the covariance matrix 
$$\bSigma = \left[\begin{array}{cc}\bSigma_{S_0,S_0} & \bSigma_{S_0,S_0^c}\\ \bSigma_{S_0^c,S_0} & \bSigma_{S_0^c,S_0^c}\end{array}\right],$$
where $\bSigma_{S_0,S_0}$ corresponds to the covariance of $\bX_{S_0}$. Furthermore, we let
\begin{align}
\bSigma_{S_0^c|S_0} &:= \bSigma_{S_0^c,S_0^c} - \bSigma_{S_0^c,S_0} \bSigma_{S_0,S_0}^{-1} \bSigma_{S_0,S_0^c},\\
\rho_{\infty}(\bSigma_{S_0,S_0}^{1/2}) & := \|\bSigma_{S_0,S_0}^{-1/2}\|_{\infty, \infty}\|\bSigma_{S_0,S_0}^{1/2}\|_{\infty, \infty},
\end{align}
be the conditional covariance matrix of $\bX_{S_0^c} | \bX_{S_0}$, and the condition number of $\bSigma_{S_0,S_0}^{1/2}$ with respect to $\|\cdot\|_{\infty,\infty}$, respectively. 
We assume that
\begin{assumption}[Irrepresentable Condition] \label{irrepcond}  
$$
\|\bSigma_{S_0^c,S_0}\bSigma_{S_0,S_0}^{-1}\|_{\infty,\infty}  \leq (1 - \kappa), \quad \mbox{for some $\kappa > 0$.}
$$
\end{assumption}
\begin{assumption}[Bounded Spectrum]\label{bounded:spec:assump} For some fixed $0 < \lambda_{\min} \leq \lambda_{\max} < \infty $,
$$
 \lambda_{\min} \leq \bSigma_{S_0,S_0} \leq \lambda_{\max} .
$$
\end{assumption}
\begin{assumption}[Bounded 4\textsuperscript{th} Moment] \label{bounded4} $\e(Y^4) < \infty$, and does not scale with $(n,p,s)$.
\end{assumption}

Recall that the irrepresentable condition is proved to be necessary for successful support recovery \cite[Theorem 4]{wainwright2009sharp} in the linear model, and hence one should not expect that Assumption \ref{irrepcond} can be weakened. Assumption \ref{bounded4} guarantees that $\sigma^2$, $\eta$, $c_0$, $\gamma$, $\xi^2$ and $\theta^2$ 
are well defined and finite. Finally, successful support recovery will depend on the strength of the minimal signal of $\bbeta_0$. Recall that $\bbeta_0^{\min} := \min_{j \in S_0}|\beta_{0j}|$, is the minimal non-zero signal in the vector $\bbeta_0$. We are now ready to provide sufficient conditions for the LASSO signed support recovery, in the setting of SIMs

\begin{theorem} \label{mainresult} 
Let Assumptions \ref{irrepcond} --- \ref{bounded4} hold. Then for the
LASSO estimator given in (\ref{LASSOproblem}) under a SIM (\ref{SIM}), we have the following sufficient conditions:
\begin{itemize}
\item[i.] If 
$$
n_{p,s} \geq \frac{4D_{\max}(\bSigma_{S_0^c| S_0}) \left(\frac{4}{\lambda_{\min}} + \frac{\xi^2 + 1}{\lambda^2s}\right)}{\kappa^2},
$$
then $S(\widehat \bbeta) \subset S(c_0 \bbeta_0)$, with probability at least $1 - O\{p^{-1} + n^{-1} + e^{-\Omega(s)}\}$. 
\item[ii.] Let further, for some positive constant $\alpha > 0$ we have $n_{p,s} \geq \alpha$. Then there exist some positive constants $\Upsilon_0, \Upsilon_1, \Upsilon_2 > 0$ which may depend\footnote{The dependency is inversely proportional to $|c_0|$ and proportional to $\sigma$; For more details refer to the proof.} on $c_0, \sigma$ such that if:
\begin{align} \label{min:signal:strength}
\bbeta_0^{\min} & \geq  \|\bSigma^{-1/2}_{S_0,S_0}\|^2_{\infty,\infty} \lambda \Upsilon_0 +  \left[\Upsilon_1 \rho_{\infty}(\bSigma^{1/2}_{S_0,S_0})\sqrt{\frac{s}{n \log (p-s)}} \|\bbeta_0\|_{\infty} + \frac{\Upsilon_2 \|\bSigma^{-1/2}_{S_0,S_0}\|_{\infty,\infty}}{\sqrt{s}} \right] n_{p,s}^{-1/2} 
\end{align}
we have $S_{\pm}(\widehat \bbeta) = S_{\pm}(c_0 \bbeta_0)$ with probability at least $1 - O\{e^{-\Omega( s \wedge \log (p-s))} + (\log p)^{-1} + n^{-1}\}$.
\end{itemize}
\end{theorem}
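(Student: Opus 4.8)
The plan is to verify both conclusions through the primal--dual witness (PDW) construction recorded above: part i is exactly the strict dual feasibility requirement $\max_{j \in S_0^c}|Z_j| < 1$, and part ii is the sign-correctness requirement $\|\Delta\|_\infty < |c_0|\,\bbeta_0^{\min}$ on the support (on the event of part i one has $\check\bbeta = \widehat\bbeta$, so it suffices to control the vector $\Delta = (\Delta_j)_{j \in S_0}$). Throughout I would exploit the one structural fact that makes the SIM tractable: since $\bbeta_0$ is supported on $S_0$, the response $\bY$ and hence $\bfw = \bY - c_0\Xbb\bbeta_0$ are measurable with respect to $(\Xbb_{,S_0}, \bvarepsilon)$, and therefore independent of the conditional fluctuations of the off-support columns of $\Xbb$. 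I would also record first, via $\Xbb_{,S_0} = \bfZ\,\bSigma_{S_0,S_0}^{1/2}$ with $\bfZ$ an $n\times s$ matrix of i.i.d.\ standard normals, the standard concentration of $\widehat\bSigma := n^{-1}\Xbb_{,S_0}\T\Xbb_{,S_0}$ (bounded extreme eigenvalues under Assumption \ref{bounded:spec:assump}, and control of $\widehat\bSigma^{-1}$ in $\|\cdot\|_{\infty,\infty}$), which feeds both parts and contributes the $e^{-\Omega(s)}$ error term.

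For part i, I would condition on $(\Xbb_{,S_0}, \bY)$ and use the Gaussian regression decomposition $\Xbb_{,j} = \Xbb_{,S_0}\bSigma_{S_0,S_0}^{-1}\bSigma_{S_0,j} + \mathbf{E}_j$ for each $j \in S_0^c$, where $\mathbf{E}_j$ has i.i.d.\ $\cN(0,(\bSigma_{S_0^c|S_0})_{jj})$ entries and is independent of $(\Xbb_{,S_0},\bY)$. Substituting into $Z_j$ and using $\Pb_{\Xbb_{,S_0}^{\perp}}\Xbb_{,S_0} = 0$ splits $Z_j$ into a deterministic mean $\bSigma_{j,S_0}\bSigma_{S_0,S_0}^{-1}\check\zb_{S_0}$, of absolute value at most $1-\kappa$ by Assumption \ref{irrepcond} and $\|\check\zb_{S_0}\|_\infty \le 1$, plus two conditionally mean-zero Gaussian terms $A_j = \mathbf{E}_j\T\Xbb_{,S_0}(\Xbb_{,S_0}\T\Xbb_{,S_0})^{-1}\check\zb_{S_0}$ and $B_j = \mathbf{E}_j\T\Pb_{\Xbb_{,S_0}^{\perp}}(\bfw/(\lambda n))$. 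Their conditional variances are $\var(A_j) \le (\bSigma_{S_0^c|S_0})_{jj}\,\check\zb_{S_0}\T\widehat\bSigma^{-1}\check\zb_{S_0}/n \lesssim D_{\max}(\bSigma_{S_0^c|S_0})\,s/(n\lambda_{\min})$ and $\var(B_j) \le (\bSigma_{S_0^c|S_0})_{jj}\,\|\bfw\|_2^2/(\lambda n)^2 \lesssim D_{\max}(\bSigma_{S_0^c|S_0})\,\xi^2/(\lambda^2 n)$, the latter after showing $\|\bfw\|_2^2 \asymp n\xi^2$ by Chebyshev under Assumption \ref{bounded4} (this yields the $n^{-1}$ error term). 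A Gaussian tail bound with threshold $\kappa/2$ and a union bound over the $p-s$ indices (error $p^{-1}$) then force $\max_j |Z_j| < 1$ precisely when $n_{p,s}$ exceeds a constant multiple of $D_{\max}(\bSigma_{S_0^c|S_0})(\lambda_{\min}^{-1} + (\xi^2+1)/(\lambda^2 s))/\kappa^2$, which is the stated threshold.

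For part ii I would, on the event of part i, bound $\|\Delta\|_\infty$ where $\Delta = \widehat\bSigma^{-1}[\,n^{-1}\Xbb_{,S_0}\T\bfw - \lambda\,\sign(c_0\bbeta_{0S_0})\,]$. The essential step is to condition on the one-dimensional sufficient direction $\mathbf{u} := \Xbb_{,S_0}\bbeta_{0S_0} = \Xbb\bbeta_0$ (and on $\bvarepsilon$) and to write $\bX_{i,S_0} = u_i\,\bSigma_{S_0,S_0}\bbeta_{0S_0} + \mathbf{r}_i$, where $\mathbf{r}_i$ is Gaussian, independent of $u_i$, with covariance $\bSigma_{S_0,S_0} - \bSigma_{S_0,S_0}\bbeta_{0S_0}\bbeta_{0S_0}\T\bSigma_{S_0,S_0}$. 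Since $w_i$ is a function of $(u_i,\varepsilon_i)$ only, this gives $n^{-1}\Xbb_{,S_0}\T\bfw = \bar\tau\,\bSigma_{S_0,S_0}\bbeta_{0S_0} + \mathbf{g}$, with $\bar\tau := n^{-1}\sum_i u_i w_i$ a mean-zero scalar (because $\EE[u_i w_i] = c_0 - c_0\,\EE[(\bX\T\bbeta_0)^2] = 0$) and $\mathbf{g} := n^{-1}\sum_i \mathbf{r}_i w_i$ conditionally Gaussian with mean zero. I would then split $\|\Delta\|_\infty$ into three pieces matching (\ref{min:signal:strength}): (a) the penalty bias $\lambda\|\widehat\bSigma^{-1}\sign(c_0\bbeta_{0S_0})\|_\infty \lesssim \lambda\|\bSigma_{S_0,S_0}^{-1/2}\|_{\infty,\infty}^2$, giving the first term; (b) the scalar piece $|\bar\tau|\,\|\widehat\bSigma^{-1}\bSigma_{S_0,S_0}\bbeta_{0S_0}\|_\infty$, where writing $\widehat\bSigma^{-1}\bSigma_{S_0,S_0}\bbeta_{0S_0} = \bbeta_{0S_0} + \widehat\bSigma^{-1}(\bSigma_{S_0,S_0}-\widehat\bSigma)\bbeta_{0S_0}$ sends the $\bbeta_{0S_0}$-part into the lower-order $n^{-1/2}\|\bbeta_0\|_\infty$ contribution (absorbed by the last term), while the fluctuation part, controlled by $|\bar\tau| \lesssim n^{-1/2}$ together with $\|\widehat\bSigma^{-1}(\bSigma_{S_0,S_0}-\widehat\bSigma)\|_{\infty,\infty} \lesssim \rho_\infty(\bSigma_{S_0,S_0}^{1/2})\,s/\sqrt{n}$ (using $\|\cdot\|_{\infty,\infty} \le \sqrt{s}\,\|\cdot\|_{2,2}$ for $s\times s$ matrices and $\|\widehat\bSigma-\bSigma_{S_0,S_0}\|_{2,2} \lesssim \sqrt{s/n}$), produces the middle term $\rho_\infty(\bSigma_{S_0,S_0}^{1/2})\,(s/n)\,\|\bbeta_0\|_\infty$; and (c) the conditionally Gaussian piece $\|\widehat\bSigma^{-1}\mathbf{g}\|_\infty$, whose coordinatewise variance is of order $\xi^2\|\bSigma_{S_0,S_0}^{-1/2}\|_{\infty,\infty}^2/n$, so a union bound over the $s$ support coordinates (with $\log s \lesssim \log(p-s)$ from $s < p - p^{\iota}$) yields the last term $\|\bSigma_{S_0,S_0}^{-1/2}\|_{\infty,\infty}\sqrt{\log(p-s)/n} = \|\bSigma_{S_0,S_0}^{-1/2}\|_{\infty,\infty}\,s^{-1/2}\,n_{p,s}^{-1/2}$. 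Sign consistency requires $|\Delta_j| < |c_0\beta_{0j}|$ for each $j \in S_0$, i.e.\ $\|\Delta\|_\infty < |c_0|\,\bbeta_0^{\min}$; dividing the three-term bound by $|c_0|$ gives (\ref{min:signal:strength}) with $\Upsilon_0,\Upsilon_1,\Upsilon_2$ scaling like $1/|c_0|$, and the moment-only control of $\bar\tau$ and $\|\bfw\|_2^2$ contributes the $(\log p)^{-1}$ and $n^{-1}$ error terms.

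The main obstacle is part ii. In part i the PDW is genuinely distribution-free in $\bfw$, since $\bfw$ enters only through $\Pb_{\Xbb_{,S_0}^{\perp}}\bfw$ paired with the independent Gaussian $\mathbf{E}_j$, so the fact that $\bfw$ is not centred is irrelevant. In part ii, by contrast, $\Xbb_{,S_0}\T\bfw$ appears un-projected, and unlike the linear model $\bfw$ is neither mean-zero nor independent of $\Xbb_{,S_0}$, so the linear-model argument does not transfer directly. The resolution is the SIM-specific conditioning on the sufficient direction $\mathbf{u} = \Xbb\bbeta_0$, combined with the identity $\EE[\Xbb\T\bfw]=0$ (equivalently $\bSigma^{-1}\EE(Y\bX) = c_0\bbeta_0$ from Lemma \ref{proplemma}), which simultaneously restores the exact centering of $\bar\tau$ and the conditional Gaussianity of $\mathbf{g}$. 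A secondary difficulty is carrying the whole argument in the $\|\cdot\|_{\infty,\infty}$ geometry---responsible for the appearance of $\rho_\infty(\bSigma_{S_0,S_0}^{1/2})$ and $\|\bSigma_{S_0,S_0}^{-1/2}\|_{\infty,\infty}$---while assuming only a finite fourth moment of $Y$, which confines the scalar and quadratic fluctuations $\bar\tau$ and $\|\bfw\|_2^2$ to Chebyshev-type tails and hence yields the weaker $(\log p)^{-1}$ and $n^{-1}$ terms in the probability bound.
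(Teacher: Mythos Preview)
Your treatment of part i is correct and essentially identical to the paper's: the regression decomposition $\Xbb_{,j}=\Xbb_{,S_0}\bSigma_{S_0,S_0}^{-1}\bSigma_{S_0,j}+\mathbf{E}_j$, the irrepresentable bound on the deterministic part, the conditional-Gaussian bound on the fluctuation with variance controlled through $\|\bfw\|_2^2$ via Chebyshev, and the union bound over $S_0^c$.

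Part ii has a genuine gap in piece (c). Your decomposition $n^{-1}\Xbb_{,S_0}\T\bfw=\bar\tau\,\bSigma_{S_0,S_0}\bbeta_{0S_0}+\mathbf{g}$ is correct, and $\mathbf{g}=n^{-1}\sum_i w_i\mathbf{r}_i$ is indeed mean-zero Gaussian conditionally on $(\mathbf{u},\bvarepsilon)$. But the object you need is $\widehat\bSigma^{-1}\mathbf{g}$, and $\widehat\bSigma=n^{-1}\Xbb_{,S_0}\T\Xbb_{,S_0}$ is itself a function of the same $\mathbf{r}_i$'s (since $\bX_{i,S_0}=u_i\bSigma_{S_0,S_0}\bbeta_{0S_0}+\mathbf{r}_i$). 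Hence $\widehat\bSigma^{-1}\mathbf{g}$ is \emph{not} conditionally Gaussian, and your claimed coordinatewise variance $\xi^2\|\bSigma_{S_0,S_0}^{-1/2}\|_{\infty,\infty}^2/n$ is unjustified. Replacing $\widehat\bSigma^{-1}$ by $\bSigma_{S_0,S_0}^{-1}$ and bounding the difference naively via $\|\widehat\bSigma^{-1}-\bSigma_{S_0,S_0}^{-1}\|_{\infty,\infty}\|\mathbf{g}\|_\infty$ loses an extra factor of $\sqrt{s}$ and does not recover the stated rate. The penalty-bias piece (a) and the scalar piece (b) are fine, but (c) is precisely where the SIM departs from the linear model and where the work lies.

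The paper handles this through a different and more elaborate device. After reducing to the isotropic case via $\bfZ=\Xbb_{,S_0}\bSigma_{S_0,S_0}^{-1/2}$, it introduces a \emph{symmetrized} design $\widetilde\Xbb_{,S_0}\T=(\eye-\bbeta_{0S_0}\bbeta_{0S_0}\T)\Xbb_{,S_0}\T+\bbeta_{0S_0}\bbeta_{0S_0}\T\Xbb_{,S_0}^{*\T}$ with $\Xbb_{,S_0}^{*}$ an independent copy, so that $\widetilde\Xbb_{,S_0}$ is genuinely independent of $\bY$. The term $(n[\widetilde\Xbb_{,S_0}\T\widetilde\Xbb_{,S_0}]^{-1}-\eye)n^{-1}\widetilde\Xbb_{,S_0}\T\bY$ is then controlled by an SVD $\widetilde\Xbb_{,S_0}=\Ub\Db\Vb\T$ together with concentration of Lipschitz functions on the sphere (Lemma \ref{sphereconclemma}); the discrepancy $[\Xbb_{,S_0}\T\Xbb_{,S_0}]^{-1}-[\widetilde\Xbb_{,S_0}\T\widetilde\Xbb_{,S_0}]^{-1}$ is bounded in $\|\cdot\|_{\infty,\infty}$ via two applications of Woodbury's identity (Lemma \ref{inftyinftynorm}). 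These two lemmas are the substitute for the missing independence that your argument tacitly assumes in (c), and they are what produces the $\rho_\infty(\bSigma_{S_0,S_0}^{1/2})$ and $\|\bSigma_{S_0,S_0}^{-1/2}\|_{\infty,\infty}$ factors in the final bound.
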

Before we proceed with the proof of our main result, we would like to mention a few remarks on our sufficient conditions, in particular the ones suggested in ii.

\begin{remark}\label{signal:strength:remark} Firstly, the slow probability convergence rate $(\log p)^{-1}$ in part ii. is purely due to the fact we are not willing to assume that $Y$ is coming from a sub-Gaussian distribution. If such an assumption is made the rate reduces to the usual $p^{-1}$.
Secondly, observe that $\lambda_{\max}^{-1} \leq \|\bbeta_0\|_2 \leq \lambda_{\min}^{-1}$. Hence the value of $\bbeta_0^{\min}$ is of ``largest'' order when $\bbeta_0^{\min} \asymp \|\bbeta_0\|_{\infty} \asymp \frac{1}{\sqrt{s}}$. Setting 
$$
\lambda := \lambda_T = \sqrt{(\xi^2 + 1)\frac{4 C_T D_{\max}(\bSigma_{S_0^c|S_0})}{\kappa^2}\frac{\log(p-s)}{n}},
$$ 
for some $C_T > 1$ gives us that the condition from i. is equivalent to:
$$
n_{p,s} \geq \frac{16 D_{\max}(\bSigma_{S_0^c|S_0})}{(1 - C^{-1}_T)\kappa^2 \lambda_{\min}}.
$$
Note that due to positive definiteness: $D_{\max}(\bSigma_{S_0^c|S_0}) \leq D_{\max}(\bSigma_{S_0^c,S_0^c})$, and hence it is reasonable to assume $D_{\max}(\bSigma_{S_0^c|S_0}) = O(1)$. Assume additionally that $\|\bSigma^{-1/2}_{S_0,S_0}\|_{\infty,\infty} = O(1)$, $\rho_{\infty}(\bSigma^{1/2}_{S_0,S_0}) = O(1)$, and let $\bbeta_0^{\min} \asymp \frac{1}{\sqrt{s}}$. Notice that the space of matrices satisfying assumptions \ref{irrepcond} --- \ref{bounded4} and the latter assumptions is non-empty as one can easily show that Toeplitz matrices with entries $\Sigma_{kj} = \rho^{|k - j|}$ with $|\rho| < 1$ satisfy these conditions e.g. Using the same $\lambda = \lambda_T$ we can clearly achieve the sufficient condition in ii. provided that the model complexity adjusted sample size $n_{p,s}$ is large enough. On the other hand, this scaling can no longer be guaranteed if $\bbeta_0^{\min} \asymp \frac{1}{\sqrt{s}}$ fails to hold. In any case, it is clear that when $\bSigma = \eye_{p \times p}$ all conditions required are met, and hence Theorem \ref{mainresult} shows that the LASSO algorithm will work optimally (up to a scalar) in terms of the rescaled sample size. 
Below we formulate a result which allows us to claim certain optimality for the LASSO algorithm in the case of a more general $\bSigma$ matrix whenever we have approximately equally sized coefficients.
\end{remark}

\begin{proposition} \label{lower:bound:prop:new} Consider a special example of a SIM with $Y = G\{h(\bX\T \bbeta_0) + \varepsilon\}$ for  $\bX \sim \cN(0, \bSigma)$, and $G, h$ are known strictly increasing continuous functions and in addition $h$ is an $L$-Lipschitz function. Assume that there exists a set $S \subset [p], |S| = s$ such that $\|\bSigma_{S,S}\|_{\infty,\infty} < R$, Assumptions \ref{irrepcond} and \ref{bounded:spec:assump} hold on $S$ and $0 < d \leq \diag(\bSigma_{S^c,S^c}) \leq D < \infty$. In addition, assume that $\varepsilon$ is a continuously distributed random variable with density $p_{\varepsilon}$ satisfying:
\begin{align}\label{eps:dens}
p_\varepsilon(x) \varpropto \exp(-P(x^2)),
\end{align}
where $P$ is any non-zero polynomial with non-negative coefficients such that $P(0) = 0$. We restrict the parameter space to $\Big\{\bbeta \in \RR^p: \bbeta\T\bSigma\bbeta = 1,  \|\bbeta\|_0 = s, \frac{\bbeta^{\min}}{\|\bbeta\|_{\infty}} \geq c_{\bSigma}\Big\},$ where $c_{\bSigma} > 0$ is a sufficiently small constant depending solely on $\bSigma$ (see (\ref{c_sigma:def}) for details). If 
$$n_{p,s} < C,$$ 
for some constant $C > 0$ (depending on $P, G, h, \bSigma$) and $s$ is sufficiently large, any algorithm for support recovery makes errors with probability at least $\frac{1}{2}$ asymptotically.
\end{proposition}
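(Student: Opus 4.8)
The plan is to establish this minimax lower bound by an information-theoretic (Fano) argument, reducing support recovery to a multiway hypothesis test over a carefully chosen finite sub-family of the parameter space. First I would reduce to the canonical noise model: since $G$ is known, strictly increasing and continuous, it is invertible on its range, so observing $Y$ is informationally equivalent to observing $\tilde Y := G^{-1}(Y) = h(\bX\T\bbeta_0) + \varepsilon$, and we may take $G = \mathrm{id}$. For two parameters $\bbeta, \bbeta'$ the joint density of $(\tilde Y, \bX)$ factors as $p_\varepsilon(\tilde y - h(\bx\T\bbeta))\,\phi_{\bSigma}(\bx)$, so the Gaussian design cancels in the likelihood ratio; writing $\Delta := h(\bX\T\bbeta) - h(\bX\T\bbeta')$ and using $p_\varepsilon \propto \exp(-P(\cdot^2))$,
\[
\DKL(\p_{\bbeta}\,\|\,\p_{\bbeta'}) = \e\big[P((\varepsilon+\Delta)^2) - P(\varepsilon^2)\big].
\]
Expanding the polynomial and using $\varepsilon \independent \bX$ together with the evenness of $p_\varepsilon$ (which kills all odd moments of $\varepsilon$, hence all odd powers of $\Delta$ in expectation), every surviving term carries an even power $\Delta^{2\ell}$, $\ell \ge 1$. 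Since $h$ is $L$-Lipschitz, $|\Delta| \le L|\bX\T(\bbeta-\bbeta')|$ with $\bX\T(\bbeta-\bbeta') \sim \cN(0,(\bbeta-\bbeta')\T\bSigma(\bbeta-\bbeta'))$, so Gaussian moment bounds give $\e|\Delta|^{2\ell} \le C_\ell\,[(\bbeta-\bbeta')\T\bSigma(\bbeta-\bbeta')]^{\ell}$. Because both competing vectors lie on the sphere $\bbeta\T\bSigma\bbeta=1$, the quantity $(\bbeta-\bbeta')\T\bSigma(\bbeta-\bbeta')\le 4$ is bounded, so the $\ell=1$ term dominates the finite sum and yields the clean bound $\DKL(\p_{\bbeta}\|\p_{\bbeta'}) \le C'\,(\bbeta-\bbeta')\T\bSigma(\bbeta-\bbeta')$.

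Second I would construct the packing ensemble. Fix $S' \subset S$ with $|S'| = s-1$, assign a common value $b \asymp s^{-1/2}$ to every coordinate of $S'$ (identical across all hypotheses), and for each $j \in S^c$ let $\bbeta^{(j)}$ have additional support $\{j\}$ with value $a_j$ chosen so that $\bbeta^{(j)\T}\bSigma\bbeta^{(j)} = 1$. This produces $M = p - s$ vectors with pairwise distinct supports $S'\cup\{j\}$. The decisive point is that the cross term $R_j := \sum_{i\in S'}\Sigma_{ij}$ is uniformly $O(1)$: writing $\bSigma_{j,S} = (\bSigma_{j,S}\bSigma_{S,S}^{-1})\bSigma_{S,S}$, the irrepresentable condition bounds $\|\bSigma_{j,S}\bSigma_{S,S}^{-1}\|_1 \le 1-\kappa$, and $\|\bSigma_{S,S}\|_{\infty,\infty} < R$ then forces $|R_j| \le (1-\kappa)R$. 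Together with $\diag(\bSigma_{S^c,S^c}) \in [d,D]$ this makes the normalizing quadratic for $a_j$ well-conditioned with an admissible root $a_j \asymp s^{-1/2}$, and choosing the budget $1 - b^2\mathbf{1}\T\bSigma_{S',S'}\mathbf{1} \asymp s^{-1}$ simultaneously secures the ratio constraint (here $|a_j|/|b| \ge c_{\bSigma}$ for sufficiently small $c_{\bSigma}$). Since the $S'$-block is shared, $\bbeta^{(j)} - \bbeta^{(j')}$ is supported on $\{j,j'\}$, whence $(\bbeta^{(j)}-\bbeta^{(j')})\T\bSigma(\bbeta^{(j)}-\bbeta^{(j')}) \le 4D\max(a_j^2,a_{j'}^2) \asymp s^{-1}$.

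Finally I would invoke Fano. Any support recovery rule induces a test of the $M$ equally likely hypotheses, so the minimax error is at least $1 - \frac{I(\Dsc; J) + \log 2}{\log M}$. Tensorizing the single-observation bound gives $I(\Dsc; J) \le n\max_{j,j'}\DKL(\p_{\bbeta^{(j)}}\|\p_{\bbeta^{(j')}}) \le C' n/s$, while $\log M = \log(p-s)$. Hence the error is at least $1 - C'\,n_{p,s} - \frac{\log 2}{\log(p-s)}$, and since $s < p - p^{\iota}$ forces $\log(p-s) \to \infty$, choosing $C$ small enough that $C'C \le 1/4$ gives asymptotic error at least $\tfrac12$ whenever $n_{p,s} < C$ and $s$ is large.

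The main obstacle is the packing construction for a general $\bSigma$: one must produce vectors with genuinely distinct supports yet $O(s^{-1})$ pairwise $\bSigma$-distance while respecting both the exact normalization $\bbeta\T\bSigma\bbeta = 1$ and the ratio constraint $\bbeta^{\min}/\|\bbeta\|_\infty \ge c_{\bSigma}$. The $j$-dependent cross-covariances $R_j$ are precisely what threaten this, and it is the interplay of the irrepresentable condition, the bounded spectrum, and $\|\bSigma_{S,S}\|_{\infty,\infty}<R$ that tames them; verifying that the normalizing quadratic has an admissible root $a_j \asymp s^{-1/2}$ uniformly in $j$ is the step demanding the most care, and is the reason $s$ must be taken sufficiently large. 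The KL computation, by contrast, is routine once the even moments of $\Delta$ are controlled via the Lipschitz property of $h$ and the evenness of $p_\varepsilon$.
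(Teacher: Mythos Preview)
Your proposal is correct and follows essentially the same architecture as the paper: a Fano argument over $p-s$ parameter vectors sharing a common $(s-1)$-support block and differing in one coordinate of $S^c$, with the irrepresentable condition and the bound $\|\bSigma_{S,S}\|_{\infty,\infty}<R$ controlling the cross-covariance so that each $a_j\asymp s^{-1/2}$ and the pairwise $\bSigma$-distance is $O(s^{-1})$. The one noteworthy difference is in the KL step: the paper first proves a \emph{pointwise} bound $\DKL\big(p(f(u,\varepsilon))\,\|\,p(f(v,\varepsilon))\big)\le \exp(\Xi(u-v)^2)-1$ and then integrates over $(U_k,U_j)$ via the Gaussian MGF $\e\exp(\Xi(U_k-U_j)^2)-1\le 8\Xi/s$, which forces the side condition $s\ge 16\Xi$; you instead expand $P((\varepsilon+\Delta)^2)-P(\varepsilon^2)$, use the evenness of $p_\varepsilon$ and the Lipschitz property of $h$ to bound $\e|\Delta|^{2\ell}$ by Gaussian moments, and sum to get the linear bound $\DKL\le C'\,(\bbeta-\bbeta')\T\bSigma(\bbeta-\bbeta')$ directly. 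Your route is slightly more elementary (it avoids the MGF finiteness issue), while the paper's modular formulation via condition~(\ref{KLdivcond}) makes the lower-bound lemma reusable for other noise families; substantively the two arguments coincide.
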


Proposition \ref{lower:bound:prop:new} justifies that the LASSO is sample size optimal even when more generic covariance matrices than identity are considered, provided that we can show the class of SIMs defined satisfy the assumptions of this section. We do so in the following Remark.

\begin{remark} \label{correlation:remark}  We will now argue that if we have a model as described in Proposition \ref{lower:bound:prop:new}, the LASSO will recover the support provided that the the covariance matrix $\bSigma$ satisfies Assumptions \ref{irrepcond} and \ref{bounded:spec:assump}, $\EE (Y^4) < \infty$ and the minimal signal strength is sufficiently strong as required by Remark \ref{signal:strength:remark}. Notice that by Chebyshev's association inequality \citep{boucheron2013concentration}, we have:
$$
\EE (Y\bX\T\bbeta_0) > \EE(Y)\EE(\bX\T\bbeta_0)  = 0,
$$
where the inequality is strict since $G$ and $h$ are strictly increasing and $\bX\T\bbeta_0 \sim \cN(0,1)$. In fact, using exactly the same argument, one can show more generally that if $r(z) = \EE [Y | \bX\T\bbeta_0= z]$ is a strictly monotone function it follows that $\EE(Y\bX\T\bbeta_0) \neq 0$. 
We close this remark by pointing out that the logistic regression model $P(Y=1 \mid\bX) = g(\bX\T\bbeta_0)$ with $g(x)=e^x/(1+e^x)$ satisfies the condition since $r(z) = g(z)$ is strictly monotone, and hence using the LASSO algorithm one can recover the support correctly. This is an example that even discrete valued $Y$ outcomes can be solved by the least squares LASSO algorithm.  
\end{remark}

\subsubsection{Outcome Transformations}\label{outcome:transf:sec}

In this subsection we provide brief comments on possible strategies to transform the data in view of the results of  Theorem \ref{mainresult}. A crucial condition in order for the signed support recovery of $\bbeta_0$ to hold is $c_0 \neq 0$, which should not be expected to hold in general, but nevertheless naturally occurs in many cases. If this condition does not hold, one can potentially transform the outcome $\tilde Y = g(Y)$ by a function $g$ in order to achieve $\EE(\tilde{Y}\bX\T\bbeta_0) \neq 0$ even if $\EE(Y\bX\T\bbeta_0) = 0$. If we use $\tilde Y_i = g(Y_i)$ instead of $Y_i$ then clearly LASSO succeeds under the assumptions from Theorem \ref{mainresult}, only with assumptions on $Y_i$ being replaced by $\tilde Y_i$. The following proposition characterizes when one should expect a correlation inducing transformation $g$ to exist.
\begin{proposition}\label{rao:blackwell:type:of:res} There exists a measurable function $g : \RR \mapsto \RR$ such that $\EE \{g(Y)\bX\T\bbeta_0\} \neq 0$ if and only if
$\Var\{\EE(\bX\T\bbeta_0 | Y)\} > 0.$
\end{proposition}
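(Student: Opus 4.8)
The plan is to condition everything on $Y$ and reduce the statement to a one-line Hilbert-space observation. I would first set $W := \bX\T\bbeta_0$ and $m(Y) := \EE(W\mid Y)$, noting that $m(Y)$ is exactly the Rao--Blackwellization of $W$ given $Y$ (the source of the name). Since $\bX\sim\cN(0,\bSigma)$ with $\bbeta_0\T\bSigma\bbeta_0=1$, I have $W\sim\cN(0,1)$, so $W\in L^2$ and $\EE(W)=0$; conditional Jensen then gives $\EE\{m(Y)^2\}\le\EE(W^2)=1<\infty$ and $\EE\{m(Y)\}=0$, so that $\Var\{\EE(W\mid Y)\}=\EE\{m(Y)^2\}$. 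The crux is the tower-property identity, valid for any measurable $g$ with $g(Y)W$ integrable:
\begin{equation}\label{eq:tower}
\EE\{g(Y)W\}=\EE\{g(Y)\,\EE(W\mid Y)\}=\EE\{g(Y)m(Y)\},
\end{equation}
which recasts the target quantity as the $L^2$ inner product of $g(Y)$ against the fixed function $m(Y)$.

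For the ``if'' direction I would assume $\Var\{\EE(W\mid Y)\}=\EE\{m(Y)^2\}>0$, so that $m$ is a nonzero element of $L^2(\sigma(Y))$, and simply take $g:=m$ in \eqref{eq:tower} to obtain $\EE\{g(Y)W\}=\EE\{m(Y)^2\}>0$. If a bounded witness is preferred (to sidestep any integrability concern), I would instead use the truncation $g_M(y)=m(y)\mathds{1}(|m(y)|\le M)$ and let $M\to\infty$: dominated convergence gives $\EE\{g_M(Y)m(Y)\}\to\EE\{m(Y)^2\}>0$, so $\EE\{g_M(Y)W\}\neq0$ for all large $M$.

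For the ``only if'' direction I would argue by contraposition. If $\Var\{\EE(W\mid Y)\}=0$ then $m(Y)=0$ almost surely, and \eqref{eq:tower} immediately yields $\EE\{g(Y)W\}=0$ for every measurable $g$ for which the left-hand side is defined; hence no correlation-inducing transformation can exist. Combining the two directions proves the equivalence.

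I expect the only friction to be the bookkeeping of integrability rather than any genuine obstacle: I must check that the tower step in \eqref{eq:tower} is legitimate (automatic whenever $g(Y)W$ is integrable, which is implicit in $\EE\{g(Y)W\}$ being well defined) and that the chosen witness yields a finite value. Both are immediate here because $W$ is Gaussian and hence square-integrable, so the whole proof is essentially the observation that correlating $W$ with any function of $Y$ can only exploit the conditional mean $\EE(W\mid Y)$.
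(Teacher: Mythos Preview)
Your proposal is correct and follows essentially the same route as the paper: both reduce $\EE\{g(Y)W\}$ to $\EE\{g(Y)m(Y)\}$ via the tower property and take $g=m=\EE(\bX\T\bbeta_0\mid Y)$ as the witness for the ``if'' direction. The only cosmetic difference is in the ``only if'' direction: the paper bounds $[\EE\{g(Y)m(Y)\}]^2\le\Var\{g(Y)\}\Var\{m(Y)\}$ by Cauchy--Schwarz, whereas you argue the contrapositive $m(Y)=0$ a.s.\ directly; your version is arguably cleaner since it does not implicitly need $g(Y)\in L^2$.
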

Another potential advantage of performing a transformation is to ensure that $\tilde{Y}=g(Y)$ is sub-Gaussian. For example, if we let $g(y) = F(y) = P(Y \le y)$, then the sub-Gaussianity of $\tilde{Y}$ is guaranteed, which would improve the rate of support recovery. 
For many choices of $g$ such as $F$, the transformation may be defined at the population level and is unknown a priori. Thus it would be desirable to employ data dependent estimate $\hat g$ of $g$. In other words we consider fitting the following LASSO to recover the support:
\begin{align} \label{LASSOproblem:transformed}
\widehat \bbeta = \argmin_{\smbbeta \in \RR^p} \frac{1}{2n} \|\hat g(\bY) - \Xbb \bbeta\|_2^2 + \lambda \|\bbeta\|_1,
\end{align}
where $\hat g(\bY)$ should be understood as element-wise application of $\hat g$. The following Corollary extends Theorem \ref{mainresult} to allow for data dependent transformations.

\begin{corollary}\label{cor:mainres:transform} Let the assumptions of Theorem \ref{mainresult} hold for $\tilde Y_i = g(Y_i)$ in place of $Y_i$ and assume additionally that:
\begin{align}\label{g:concentration}
\|\hat g(\bY) - g(\bY)\|_{2} \leq O(\sqrt{\log p}),
\end{align}
with probability at least $1 - O(p^{-1})$, then if (\ref{min:signal:strength}) holds we have $S_{\pm}(\widehat \bbeta) = S_{\pm}(c_0 \bbeta_0)$ with probability at least 
$1 - O\{e^{-\Omega( s \wedge \log (p-s))} + (\log p)^{-1} + n^{-1}\}$.
\end{corollary}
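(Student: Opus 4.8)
The plan is to treat (\ref{LASSOproblem:transformed}) as a perturbation of the LASSO run on the population-level transformed response $g(\bY)$, and to propagate the $\ell_2$ control (\ref{g:concentration}) through the primal--dual witness construction used to prove Theorem \ref{mainresult}. Write $\mathbf{d} := \hat g(\bY) - g(\bY)$, so that the residual driving (\ref{LASSOproblem:transformed}) is $\hat g(\bY) - c_0 \Xbb \bbeta_0 = \bfw + \mathbf{d}$, where $\bfw = g(\bY) - c_0 \Xbb \bbeta_0$ is precisely the residual already analyzed for $\tilde Y_i = g(Y_i)$. Because the witness quantities $Z_j$ ($j \in S_0^c$) and $\Delta_j$ ($j \in S_0$) are affine in the residual, each splits into its Theorem \ref{mainresult} value plus a $\mathbf{d}$-perturbation: for $j \in S_0^c$, $\Delta Z_j = \tfrac{1}{\lambda n}\Xbb_{,j}\T \Pb_{\Xbb_{,S_0}^{\perp}} \mathbf{d}$, and for $j \in S_0$, $\Delta(\Delta_j) = \eb_j\T (n^{-1}\Xbb_{,S_0}\T \Xbb_{,S_0})^{-1} n^{-1} \Xbb_{,S_0}\T \mathbf{d}$. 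It therefore suffices to show that these perturbations are small enough to be absorbed into the dual-feasibility margin $\kappa$ and into the signal condition (\ref{min:signal:strength}).

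Next I would bound both perturbations deterministically via Cauchy--Schwarz, using only $\|\mathbf{d}\|_2 = O(\sqrt{\log p})$ and a uniform column-norm estimate. Since $\Pb_{\Xbb_{,S_0}^{\perp}}$ is an orthogonal projection, $|\Xbb_{,j}\T \Pb_{\Xbb_{,S_0}^{\perp}} \mathbf{d}| \le \|\Xbb_{,j}\|_2 \|\mathbf{d}\|_2$, and a standard $\chi^2$ concentration plus union bound gives $\max_{j \in [p]}\|\Xbb_{,j}\|_2 = O(\sqrt{n})$ with probability $1 - O(p^{-1})$ (here using that $n_{p,s} \ge \alpha$ forces $n \gtrsim \log p$ and that the diagonal of $\bSigma$ is $O(1)$). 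Hence $\max_{j \in S_0^c}|\Delta Z_j| = O\big(\tfrac{\sqrt{\log p}}{\lambda \sqrt{n}}\big)$; with $\lambda = \lambda_T \asymp \sqrt{\log(p-s)/n}$ and $\log(p-s) \asymp \log p$ this is $O(1)$ with a constant proportional to $\kappa/\sqrt{C_T}$, so the constant $C_T$ in $\lambda_T$ can be taken large enough to push it below $\kappa/2$. Likewise $\|n^{-1}\Xbb_{,S_0}\T \mathbf{d}\|_{\infty} = O(\sqrt{\log p / n})$, and combining with the sample-to-population control of $(n^{-1}\Xbb_{,S_0}\T \Xbb_{,S_0})^{-1}$ already established in the proof of Theorem \ref{mainresult} yields $\max_{j \in S_0}|\Delta(\Delta_j)| = O\big(\|\bSigma_{S_0,S_0}^{-1}\|_{\infty,\infty}\sqrt{\log p / n}\big)$. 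Since $\|\bSigma_{S_0,S_0}^{-1}\|_{\infty,\infty} \le \|\bSigma_{S_0,S_0}^{-1/2}\|_{\infty,\infty}^2$ and $\lambda_T \asymp \sqrt{\log p / n}$, this is of the same order as the leading $\Upsilon_0 \|\bSigma_{S_0,S_0}^{-1/2}\|_{\infty,\infty}^2 \lambda$ term already present in (\ref{min:signal:strength}) and can be absorbed by inflating $\Upsilon_0$.

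Finally I would conclude. In the proof of Theorem \ref{mainresult} strict dual feasibility holds with a margin (say $|Z_j| \le 1 - \kappa/2$) and the sign condition holds because $|\Delta_j| < |c_0 \beta_{0j}|$ under (\ref{min:signal:strength}); adding the perturbations above preserves $|Z_j| < 1$ and $\sign(c_0\beta_{0j} + \Delta_j) = \sign(c_0 \beta_{0j})$ after the (constant) adjustments to $C_T$ and $\Upsilon_0, \Upsilon_1, \Upsilon_2$ (imposing (\ref{min:signal:strength}) for the $\lambda$ actually used). The probability statement inherits the Theorem \ref{mainresult} bound $1 - O\{e^{-\Omega(s \wedge \log(p-s))} + (\log p)^{-1} + n^{-1}\}$, losing only the $O(p^{-1})$ from the column-norm event and the $O(p^{-1})$ from (\ref{g:concentration}), both of which are dominated. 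The main obstacle is the bound on $\max_{j \in S_0^c}|\Delta Z_j|$: since $\mathbf{d}$ is data-dependent and controlled only in $\ell_2$ norm, one cannot invoke the sub-Gaussian/conditional-independence cancellations that sharpen the analogous stochastic term in Theorem \ref{mainresult}, and must settle for the crude Cauchy--Schwarz estimate. The key point that makes this work is the exact matching of scales --- $\|\Xbb_{,j}\|_2 = O(\sqrt{n})$ and $\lambda\sqrt{n} \asymp \sqrt{\log p}$ precisely offset the $\sqrt{\log p}$ growth permitted by (\ref{g:concentration}) --- so the perturbation remains $O(1)$ rather than diverging, leaving just enough room in the margin.
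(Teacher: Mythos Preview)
Your approach is correct but differs from the paper's. The paper does not perturb the PDW quantities $Z_j$ and $\Delta_j$ directly; instead it observes that, throughout the proof of Theorem \ref{mainresult}, the response enters only through two scalar summaries: (i) $\big|n^{-1}\sum_i \bX_i\T\bbeta_0\, Y_i - c_0\big|$ and (ii) $n^{-1}\sum_i Y_i^2$. It then verifies, via the same Cauchy--Schwarz step you use (applied to $n^{-1}\sum_i \bX_i\T\bbeta_0\,(\hat g(Y_i)-g(Y_i))$ together with $\chi^2$ concentration for $n^{-1}\sum_i(\bX_i\T\bbeta_0)^2$), that both summaries retain the same rates when $Y_i$ is replaced by $\hat g(Y_i)$, and concludes that the proof of Theorem \ref{mainresult} runs verbatim with $\hat g(\bY)$. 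The advantage of this ``sufficient-statistic'' reduction is that no dual-feasibility margin has to be manufactured and none of the constants $C_T,\Upsilon_0,\Upsilon_1,\Upsilon_2$ need to be inflated --- the theorem is literally re-applied with the same $\lambda$.

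Your direct perturbation analysis buys something the paper's tracing implicitly assumes away: it does not rely on the conditional-independence arguments in Lemmas \ref{subgaussboundnormal} and \ref{sphereconclemma} (which condition on $\bY$ and use that the orthogonal Gaussian component is independent of it, hence tacitly require $\hat g$ to be a function of $\bY$ alone). The price, as you correctly identify, is that the crude Cauchy--Schwarz bound on $\Delta Z_j$ is only $O(1)$, forcing you to create slack by enlarging $C_T$ and then to absorb $\Delta(\Delta_j)$ by enlarging $\Upsilon_0$. One small point worth stating explicitly: $Z_j$ is affine in the residual only after one fixes $\check\zb_{S_0}=\sign(c_0\bbeta_{0S_0})$; since the PDW for sign recovery proceeds under this choice anyway, your splitting $\hat Z_j = Z_j + \Delta Z_j$ is legitimate, but the affineness is not automatic from the definition of $Z_j$ with a generic subgradient.
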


\begin{remark} Akin to (\ref{LASSOproblem:intro}), we do not require an intercept in the model after doing a transformation in (\ref{LASSOproblem:transformed}). This is possible since $\bX$ is assumed to have mean $0$. In practice if this were not the case, one would have to either center $\bX$ or include an intercept which is not penalized. 
\end{remark}

\section{Proof of Theorem \ref{mainresult}} \label{proof:main:result}

Our proof follows similar steps as Theorem 3 of \cite{wainwright2009sharp} although many critical modifications are needed, since the error term $\wb = \Yb - c_0 \Xbb \bbeta_0$ is no longer independent of $\Xbb$ and is not mean $0$. 

\subsection{Verifying Strict Dual Feasibility} For $j \in S_0^c$ decompose
\begin{align}\label{Ej:decomp}
\Xbb_{,{j}}\T = \bSigma_{\{j\},S_0} \bSigma_{S_0,S_0}^{-1} \Xbb_{,S_0}\T + \bE_j\T,
\end{align} 
where the entries of the prediction error vector $\bE_j = (E_{1j}, \ldots, E_{nj})\T \in \RR^n$ are i.i.d. with $E_{ij} \sim \cN(0, [\bSigma_{S_0^c|S_0}]_{jj}), i \in [n]$. In addition, observe that by this construction we have that $\bE_j$ is independent of $\Xbb_{,S_0}$ which can be verified upon multiplication by $\Xbb_{,S_0}$ in (\ref{Ej:decomp}) and taking expectation. Following the definition of $Z_j$ gives us that $Z_j = A_j + B_j$, where:
\begin{align}
A_j &:= \bE_j\T \left[\Xbb_{,S_0}(\Xbb_{,S_0}\T \Xbb_{,S_0})^{-1} \check \zb_{S_0} + \Pb_{\Xbb_{,S_0}^\perp} \left(\frac{\bfw}{\lambda n}\right)\right],\\
B_j &:= \bSigma_{\{j\},S_0}(\bSigma_{S_0,S_0})^{-1}\check \zb_{S_0}.
\end{align}
Under the irrepresentable condition, we have that $\max_{j \in S^c} |B_j| \leq (1 - \kappa)$. Conditional on $\Xbb_{,S_0}$ and $\bvarepsilon$ (which determine $\bfw = \bY - c_0 \Xbb \bbeta_0$) we have that the gradient $\check \zb_{S_0}$ is independent of the vector $\bE_j$ because the gradient is deterministic after conditioning on these quantities. We have that $\var(E_{ij}) \leq D_{\max}(\bSigma_{S_0^c|S_0})$, and thus conditionally on $\Xbb_{,S_0}$ and $\boldsymbol{\varepsilon}$ we get:
\begin{align*}
\var(A_j) & \leq D_{\max}(\bSigma_{S_0^c|S_0})\left\| \Xbb_{,S_0}(\Xbb_{,S_0}\T \Xbb_{,S_0})^{-1} \check \zb_{S_0} + \Pb_{\Xbb_{,S_0}^\perp} \left(\frac{\bfw}{\lambda n}\right)\right\|_2^2\\
&  = D_{\max}(\bSigma_{S_0^c|S_0})\left[ \check \zb_{S_0}\T (\Xbb_{,S_0}\T \Xbb_{,S_0})^{-1} \check \zb_{S_0} + \left\|\Pb_{\Xbb_{,S_0}^\perp} \left(\frac{\bfw}{\lambda n}\right)\right\|_2^2\right].
\end{align*}
Next we need a lemma, which is a slight modification of Lemma 4 in \cite{wainwright2009sharp}. The reason for this modification is that in our case $\bfw$ is no longer $\sim \cN(0, \sigma^2\Ibb)$.
\begin{lemma} \label{newlemma4} Assume that $\frac{s}{n} \leq \frac{1}{16}$. Then we have:
$$
\max_{j \in S_0^c}\var(A_j) \leq \underbrace{D_{\max}(\bSigma_{S_0^c|S_0})\left(\frac{4 s}{\lambda_{\min} n} + \frac{\xi^2 + 1}{\lambda^2 n}\right)}_{M},
$$
with probability at least $1 - 2 e^{-s/2} - n^{-1}\theta^2 $.
\end{lemma}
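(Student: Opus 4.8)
The plan is to start directly from the conditional-variance inequality already established just above the lemma, namely that, conditionally on $\Xbb_{,S_0}$ and $\bvarepsilon$,
$$
\var(A_j) \leq D_{\max}(\bSigma_{S_0^c|S_0})\left[ \check \zb_{S_0}\T (\Xbb_{,S_0}\T \Xbb_{,S_0})^{-1} \check \zb_{S_0} + \left\|\Pb_{\Xbb_{,S_0}^\perp} \left(\frac{\bfw}{\lambda n}\right)\right\|_2^2\right],
$$
and to bound the two bracketed terms separately, each on a high-probability event. The first term will be controlled by a lower bound on the smallest eigenvalue of the sample Gram matrix; the second by a crude contraction estimate followed by a second-moment (Chebyshev) tail bound on $\|\bfw\|_2^2$. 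Decoupling via the contraction property is exactly what lets us sidestep the fact that $\bfw$ is neither Gaussian, nor mean zero, nor independent of $\Xbb$ --- the main departure from Lemma 4 of \cite{wainwright2009sharp}. Note also that the bracketed quantity does not depend on $j$ at all (only the prefactor $\var(E_{ij}) \le D_{\max}(\bSigma_{S_0^c|S_0})$ does), so the maximum over $j \in S_0^c$ requires no additional probability.

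For the first term, since every coordinate of the subgradient $\check \zb_{S_0}$ lies in $[-1,1]$ we have the deterministic bound $\|\check \zb_{S_0}\|_2^2 \leq s$, whence
$$
\check \zb_{S_0}\T (\Xbb_{,S_0}\T \Xbb_{,S_0})^{-1} \check \zb_{S_0} \leq \frac{s}{\lambda_{\min}(\Xbb_{,S_0}\T \Xbb_{,S_0})}.
$$
Writing $\Xbb_{,S_0} = \bfZ \bSigma_{S_0,S_0}^{1/2}$ with $\bfZ$ having i.i.d.\ $\cN(0,1)$ entries and invoking Assumption \ref{bounded:spec:assump}, I would apply the standard non-asymptotic singular-value estimate (e.g.\ \cite{vershynin2010introduction}, Corollary 5.35): with $t=1/4$ and $\sqrt{s/n}\leq 1/4$ (which is precisely $s/n\leq 1/16$), the smallest singular value of $n^{-1/2}\bfZ$ is at least $1-\sqrt{s/n}-t\geq 1/2$ with probability at least $1-2e^{-n/32}$. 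Since the eigenvalues of $n^{-1}\Xbb_{,S_0}\T\Xbb_{,S_0}$ dominate $\lambda_{\min}\cdot\lambda_{\min}(n^{-1}\bfZ\T\bfZ)$, this gives $\lambda_{\min}(\Xbb_{,S_0}\T\Xbb_{,S_0})\geq n\lambda_{\min}/4$, so the first term is at most $4s/(\lambda_{\min} n)$. As $s/n\leq 1/16$ forces $n\geq 16s$ and hence $e^{-n/32}\leq e^{-s/2}$, this event has probability at least $1-2e^{-s/2}$, matching the first failure term.

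For the second term, because $\Pb_{\Xbb_{,S_0}^\perp}$ is an orthogonal projection it is a contraction, so $\|\Pb_{\Xbb_{,S_0}^\perp}(\bfw/\lambda n)\|_2^2 \leq \|\bfw\|_2^2/(\lambda^2 n^2)$. The vector $\bfw$ has i.i.d.\ coordinates $w_i=Y_i-c_0\bX_i\T\bbeta_0$ with $\e[w_i^2]=\xi^2$ and $\var(w_i^2)=\theta^2$, both finite by Assumption \ref{bounded4} (the finite fourth moment of $Y$, together with the Gaussianity of $\bX\T\bbeta_0$, guarantees $\e[w_i^4]<\infty$). Hence $\e\|\bfw\|_2^2=n\xi^2$ and $\var(\|\bfw\|_2^2)=n\theta^2$, and Chebyshev's inequality with deviation $n$ gives $\p(\|\bfw\|_2^2 \geq n(\xi^2+1))\leq n\theta^2/n^2=\theta^2/n$. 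On the complementary event the second term is at most $(\xi^2+1)/(\lambda^2 n)$.

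Combining the two bounds and multiplying through by $D_{\max}(\bSigma_{S_0^c|S_0})$ yields the stated bound $M$, and a union bound over the two failure events gives the probability $1-2e^{-s/2}-n^{-1}\theta^2$. I expect the only genuinely delicate point to be the handling of the residual term: one must resist the temptation to run a chi-squared argument (valid only in the Gaussian linear model where $\bfw\sim\cN(0,\sigma^2\Ibb)$) and instead lean on the contraction inequality plus a finite-variance tail bound. This is exactly why the fourth-moment assumption on $Y$ enters and why the slow polynomial rate $\theta^2/n$ appears here rather than an exponential rate.
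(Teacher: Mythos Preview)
Your proposal is correct and follows essentially the same route as the paper: both bound the projection term via the contraction $\|\Pb_{\Xbb_{,S_0}^\perp}\bfw\|_2\leq\|\bfw\|_2$ followed by Chebyshev with deviation $t=1$ (yielding the $\theta^2/n$ failure probability), and both bound the quadratic-form term via $\|\check\zb_{S_0}\|_2^2\leq s$ together with the Gaussian singular-value bound (Lemma~\ref{cor535versh}). The only cosmetic difference is in the choice of deviation parameter in that singular-value bound: the paper takes $t=\sqrt{s/n}$ to get $s_{\min}(n^{-1/2}\bfZ)\geq 1-2\sqrt{s/n}\geq 1/2$ directly with probability $1-2e^{-s/2}$, whereas you take $t=1/4$ and then use $n\geq 16s$ to convert $e^{-n/32}$ into $e^{-s/2}$; the resulting bound and probability are identical.
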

Now since conditionally on $\Xbb_{,S_0}$ and $\boldsymbol{\varepsilon}$ we have $A_j \sim \cN(0, \var(A_j))$, using a standard normal tail bound and the union bound we conclude:
$$
\p(\max_{j \in S_0^c} |A_j| \geq \kappa) \leq 2(p-s)e^{-\frac{\kappa^2}{2M}} +2 e^{-\frac{s}{2}} + n^{-1} \theta^2.
$$
We need to select $M$ so that the exponential term is decaying in the above display. A sufficient condition for this is $\kappa^2/(2M) \geq 2 \log(p-s)$. The last is equivalent to:
$$
n_{p,s} \geq \frac{4D_{\max}(\bSigma_{S_0^c| S_0}) \left(\frac{4}{\lambda_{\min}} + \frac{\xi^2 + 1}{\lambda^2 s}\right)}{\kappa^2 }.
$$

\subsection{Verifying Sign Consistency} The first part of the proof shows that the LASSO has a unique solution $\hat \bbeta$ which satisfies $S(\hat \bbeta) \subseteq S(c_0\bbeta_0)$ with high probability. Now we need to verify the sign-consistency, in order to show that the supports coincide. We have the following:
$$
\max_{j \in S_0} |\Delta_j| \leq \underbrace{\lambda \left\| (n^{-1} \Xbb_{,S_0}\T \Xbb_{,S_0})^{-1} \sign(c_0 \bbeta_{0S_0})\right\|_{\infty}}_{\bI_1} + \underbrace{\left\|(\Xbb_{,S_0}\T \Xbb_{,S_0})^{-1} \Xbb_{,S_0}\T \bfw \right\|_{\infty}}_{\bI_2}.
$$
To deal with the first term we need the following:
\begin{lemma}\label{lemmawainwrig} There exist positive constants $K_1, C_2 > 0$, such that the following holds:
$$
\p(\bI_1 \geq \lambda K_1 \|\bSigma^{-1/2}_{S_0,S_0}\|^2_{\infty,\infty}) \leq 4 \exp(- C_2(s \wedge \log(p-s))),
$$
\end{lemma}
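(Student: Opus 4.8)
The plan is to prove the stronger deterministic-plus-fluctuation statement that $\|\widehat\bSigma^{-1}\vb\|_\infty \le K_1\|\bSigma_{S_0,S_0}^{-1/2}\|_{\infty,\infty}^2$ with the claimed probability, where $\widehat\bSigma := n^{-1}\Xbb_{,S_0}\T\Xbb_{,S_0}$ and $\vb := \sign(c_0\bbeta_{0S_0}) \in \{\pm1\}^s$, since $\bI_1 = \lambda\|\widehat\bSigma^{-1}\vb\|_\infty$. The population counterpart already has the right order, $\|\bSigma_{S_0,S_0}^{-1}\vb\|_\infty \le \|\bSigma_{S_0,S_0}^{-1}\|_{\infty,\infty} \le \|\bSigma_{S_0,S_0}^{-1/2}\|_{\infty,\infty}^2$ (by submultiplicativity of $\|\cdot\|_{\infty,\infty}$ and $\|\vb\|_\infty=1$), so everything reduces to controlling the random fluctuation $\widehat\bSigma^{-1}\vb - \bSigma_{S_0,S_0}^{-1}\vb$ in $\ell_\infty$. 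The reason this deserves its own lemma is that the obvious route through operator norms, $\|(\widehat\bSigma^{-1}-\bSigma_{S_0,S_0}^{-1})\vb\|_\infty \le \|\widehat\bSigma^{-1}-\bSigma_{S_0,S_0}^{-1}\|_{2,2}\|\vb\|_2$, loses a factor $\sqrt s$ (since $\|\vb\|_2=\sqrt s$ and the spectral perturbation is of order $\sqrt{s/n}$) and would force the hopeless requirement $n \gtrsim s^2$ rather than the optimal $n\gtrsim s\log(p-s)$.

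To avoid this $\sqrt s$ loss I would argue one coordinate at a time via the nodewise-regression (Schur-complement) representation of a single row of $\widehat\bSigma^{-1}$. Fix $j\in S_0$, set $S_{-j} := S_0\setminus\{j\}$, and let $\Xbb_{,j}\in\RR^n$ be the $j$-th column of $\Xbb_{,S_0}$. The block-inverse formula gives
\[
\eb_j\T\widehat\bSigma^{-1}\vb \;=\; \frac{v_j - \vb_{S_{-j}}\T\widehat{\boldsymbol\gamma}}{\widehat\sigma^2_{j\mid -j}},
\]
where $\widehat{\boldsymbol\gamma} = (\Xbb_{,S_{-j}}\T\Xbb_{,S_{-j}})^{-1}\Xbb_{,S_{-j}}\T\Xbb_{,j}$ is the least-squares fit of $\Xbb_{,j}$ on the remaining columns and $\widehat\sigma^2_{j\mid -j} = n^{-1}\|\Pb_{\Xbb_{,S_{-j}}^\perp}\Xbb_{,j}\|_2^2$ is the residual variance. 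By normality, $\Xbb_{,j} = \Xbb_{,S_{-j}}\boldsymbol\gamma + \boldsymbol z$ with $\boldsymbol\gamma = \bSigma_{S_{-j},S_{-j}}^{-1}\bSigma_{S_{-j},j}$ and $\boldsymbol z\sim\cN(0,\sigma^2_{j\mid-j}\Ibb_n)$ independent of $\Xbb_{,S_{-j}}$, where $\sigma^2_{j\mid-j} = 1/[\bSigma_{S_0,S_0}^{-1}]_{jj}\in[\lambda_{\min},\lambda_{\max}]$. This exact conditional independence of the regression residual is the one place where Gaussianity of $\bX$ is used essentially.

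With this representation numerator and denominator decouple cleanly. The denominator equals $n^{-1}\sigma^2_{j\mid-j}$ times a $\chi^2_{\,n-s+1}$ variable, so $\chi^2$ concentration gives $\widehat\sigma^2_{j\mid-j}\ge\sigma^2_{j\mid-j}/2\ge\lambda_{\min}/2$ off an event of probability $e^{-\Omega(n)}$. For the numerator, write $v_j - \vb_{S_{-j}}\T\widehat{\boldsymbol\gamma} = (v_j - \vb_{S_{-j}}\T\boldsymbol\gamma) - \vb_{S_{-j}}\T(\widehat{\boldsymbol\gamma}-\boldsymbol\gamma)$; the first piece equals the population row identity $\sigma^2_{j\mid-j}\,\eb_j\T\bSigma_{S_0,S_0}^{-1}\vb$, of magnitude at most $\lambda_{\max}\|\bSigma_{S_0,S_0}^{-1/2}\|_{\infty,\infty}^2$, while $\widehat{\boldsymbol\gamma}-\boldsymbol\gamma = (\Xbb_{,S_{-j}}\T\Xbb_{,S_{-j}})^{-1}\Xbb_{,S_{-j}}\T\boldsymbol z$. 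Conditionally on $\Xbb_{,S_{-j}}$ the fluctuation $\vb_{S_{-j}}\T(\widehat{\boldsymbol\gamma}-\boldsymbol\gamma)$ is centered Gaussian with variance $n^{-1}\sigma^2_{j\mid-j}\,\vb_{S_{-j}}\T(n^{-1}\Xbb_{,S_{-j}}\T\Xbb_{,S_{-j}})^{-1}\vb_{S_{-j}}\le 2\lambda_{\max}s/(\lambda_{\min}n)$ on the spectral event $\|(n^{-1}\Xbb_{,S_{-j}}\T\Xbb_{,S_{-j}})^{-1}\|_{2,2}\le 2/\lambda_{\min}$, valid with probability $1-e^{-\Omega(n)}$ once $s/n$ is small (already assumed via $n_{p,s}$ large). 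A Gaussian tail bound then controls $|\vb_{S_{-j}}\T(\widehat{\boldsymbol\gamma}-\boldsymbol\gamma)|\le t$ off an event of probability $2\exp(-\Omega(nt^2/s))$.

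Combining, on the intersection of these events $|\eb_j\T\widehat\bSigma^{-1}\vb|\le (2\lambda_{\max}/\lambda_{\min})\|\bSigma_{S_0,S_0}^{-1/2}\|_{\infty,\infty}^2 + O(t/\lambda_{\min})$; choosing $t$ a sufficiently small constant and taking a union bound over the $s$ coordinates $j\in S_0$ yields $\|\widehat\bSigma^{-1}\vb\|_\infty\le K_1\|\bSigma_{S_0,S_0}^{-1/2}\|_{\infty,\infty}^2$ for a suitable $K_1$, using $\|\bSigma_{S_0,S_0}^{-1/2}\|_{\infty,\infty}^2\ge 1/\lambda_{\max}$ to absorb the additive term. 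For the probability, the Gaussian numerator tail contributes $s\exp(-\Omega(n/s)) = s\exp(-\Omega(n_{p,s}\log(p-s)))$, which is $e^{-\Omega(\log(p-s))}$ once $n_{p,s}$ exceeds a large constant and $p-s>p^\iota$, whereas the $\chi^2$ and spectral events contribute $e^{-\Omega(n)}\le e^{-\Omega(s)}$; the larger of the two is $e^{-\Omega(s\wedge\log(p-s))}$, matching the stated rate. The main obstacle is precisely securing $\ell_\infty$ (not $\ell_2$) control with the correct $\|\bSigma_{S_0,S_0}^{-1/2}\|_{\infty,\infty}^2$ scaling at the optimal sample size, which the Schur-complement representation delivers; the only remaining care is the bookkeeping that merges the two distinct exponential rates into $s\wedge\log(p-s)$.
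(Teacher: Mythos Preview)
Your argument is correct, and it takes a genuinely different route from the one the paper (and Wainwright, 2009, to which the paper defers) uses.

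The paper's approach is to whiten: write $\Xbb_{,S_0}=\bZ\bSigma_{S_0,S_0}^{1/2}$ with $\bZ$ standard Gaussian, so that
\[
(n^{-1}\Xbb_{,S_0}\T\Xbb_{,S_0})^{-1}\vb
=\bSigma_{S_0,S_0}^{-1/2}\bigl[(n^{-1}\bZ\T\bZ)^{-1}-\eye\bigr]\bSigma_{S_0,S_0}^{-1/2}\vb
+\bSigma_{S_0,S_0}^{-1}\vb,
\]
and then apply Lemma~\ref{lemma5wain:mod} to the fixed vector $\bSigma_{S_0,S_0}^{-1/2}\vb$. That lemma exploits the rotational invariance of the standard Wishart: it diagonalizes $(n^{-1}\bZ\T\bZ)^{-1}-\eye=\Vb\Db\Vb\T$ with $\Vb$ Haar-distributed and independent of $\Db$, and bounds each coordinate via inverse-$\chi^2$ tails on the diagonal part plus concentration of Lipschitz functions on the sphere for the off-diagonal part. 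The two $\|\bSigma_{S_0,S_0}^{-1/2}\|_{\infty,\infty}$ factors arise from the two matrix multiplications flanking the bracket.

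Your Schur-complement/nodewise-regression route avoids the Haar-measure machinery entirely: each row of the inverse is handled through the scalar regression of one column on the rest, and the only ingredients are a $\chi^2$ lower tail for the residual variance, a spectral bound on the $(s-1)\times(s-1)$ Gram matrix (which follows from the single event for the full $s\times s$ Gram via eigenvalue interlacing), and a one-dimensional Gaussian tail for the numerator fluctuation. This is arguably more elementary and is the device used in the debiased-LASSO literature. Wainwright's route, in turn, packages everything into one reusable lemma (Lemma~\ref{lemma5wain:mod}) that the paper also invokes elsewhere, so it has the advantage of economy within this paper. Both deliver the same $\|\bSigma_{S_0,S_0}^{-1/2}\|_{\infty,\infty}^{2}$ scaling and the same $\exp\{-C_2(s\wedge\log(p-s))\}$ probability once $n_{p,s}$ exceeds a constant.
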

The proof of this lemma is part of the proof of Theorem 3 in \cite{wainwright2009sharp} and we omit the details. Next we turn to bounding the term $\bI_2$. Here our proof departs substantially from the proof in \cite{wainwright2009sharp}, as $\bI_2$ no longer has a simple structure required in the original argument. In our case $\bfw$ depends on $\Xbb_{,S_0}$, and it is not mean $0$. We will make usage of the following result, whose proof is provided in the appendix

\begin{lemma} \label{superimportantlemma} 
Let $\|\bbeta_0\|_2 = 1$. We have $n$ i.i.d. observations $Y = f(\bX_{S_0}\T\bbeta_{0S_0}, \varepsilon)$ from a SIM, where $\bX_{S_0} \sim \cN(0,\Ibb_{s\times s})$, with $s < n$ and $n_{p,s} \geq \alpha > 0$ for some positive constant $\alpha$. Then there exist some positive constants $\Upsilon_1, \Upsilon_2> 0$ (depending on $\sigma$ and $|c_0|$), such that:
\begin{align*}
\|[\Xbb_{,S_0}\T \Xbb_{,S_0}]^{-1} \Xbb_{,S_0}\T \bY - c_0 \bbeta_{0S_0}\|_{\infty} & \leq \Big(\Upsilon_1 \frac{s}{n} \|\bbeta_{0S_0}\|_{\infty} + \Upsilon_2\sqrt{\frac{\log(p-s)}{n}}\Big). 
\end{align*}
with probability at least $1 - O\{e^{-s/2} + (\log p)^{-1} + n^{-1} + p^{-1}\}$. Denote for brevity the RHS of the inequality as $\delta(\|\bbeta_{0S_0}\|_{\infty}, n, s, p)$.
\end{lemma}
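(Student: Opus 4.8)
The starting point is the exact identity
$$
[\Xbb_{,S_0}\T\Xbb_{,S_0}]^{-1}\Xbb_{,S_0}\T\bY - c_0\bbeta_{0S_0} = \widehat\bSigma^{-1}\Big(n^{-1}\Xbb_{,S_0}\T\bfw\Big),\qquad \widehat\bSigma := n^{-1}\Xbb_{,S_0}\T\Xbb_{,S_0},
$$
where $\bfw = \bY - c_0\Xbb_{,S_0}\bbeta_{0S_0}$, so the whole task is to bound $\|\widehat\bSigma^{-1}(n^{-1}\Xbb_{,S_0}\T\bfw)\|_\infty$. Two structural facts organize the argument. First, Lemma \ref{proplemma} applied with identity covariance gives the population orthogonality $\EE[\Xbb_{,S_0}\T\bfw]=0$. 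Second --- and this is the feature of the SIM that substitutes for independence of the noise --- $\bfw$ depends on the design only through the single index $g_i:=\bX_i\T\bbeta_{0S_0}\sim\cN(0,1)$; writing each (restricted) row as $\bX_i = g_i\bbeta_{0S_0}+\bX_i^{\perp}$ with $\bX_i^{\perp}:=(\Ibb-\bbeta_{0S_0}\bbeta_{0S_0}\T)\bX_i$, Gaussianity yields $\bX_i^{\perp}\perp g_i$, hence $\bX_i^{\perp}$ is independent of $\bfw$. This is exactly what is automatic in the linear model of \cite{wainwright2009sharp} (where the noise is independent of the entire design) and what I must recover by hand here.

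Decomposing the score along and orthogonal to the index gives $\Xbb_{,S_0}\T\bfw = S_{\parallel}\bbeta_{0S_0}+\bS_{\perp}$, with $S_{\parallel}=\sum_i w_i g_i$, $\bS_{\perp}=\sum_i w_i\bX_i^{\perp}\in\bbeta_{0S_0}^{\perp}$, and $w_i=Y_i-c_0 g_i$. I then split $\widehat\bSigma^{-1}=\Ibb+R$, $R:=\widehat\bSigma^{-1}-\Ibb$, and treat the leading term $n^{-1}(S_{\parallel}\bbeta_{0S_0}+\bS_{\perp})$ first. Since $\EE[w_ig_i]=\EE[Yg]-c_0\EE[g^2]=c_0-c_0=0$, $S_{\parallel}$ is a centered i.i.d.\ sum with finite variance (guaranteed by Assumption \ref{bounded4}), so Chebyshev gives $n^{-1}|S_{\parallel}|\lesssim\sqrt{\log(p-s)/n}$ off an event of probability $O((\log p)^{-1})$. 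Conditionally on $\{(g_i,\varepsilon_i)\}$ the vector $\bS_{\perp}$ is Gaussian with covariance $(\sum_i w_i^2)(\Ibb-\bbeta_{0S_0}\bbeta_{0S_0}\T)$; controlling $\sum_i w_i^2\le 2n\xi^2$ by Chebyshev (through $\theta^2=\var(w^2)$, probability $O(n^{-1})$) and applying a Gaussian maximal inequality over the $s\le p$ coordinates gives $n^{-1}\|\bS_{\perp}\|_\infty\lesssim\xi\sqrt{\log p/n}\lesssim\sqrt{\log(p-s)/n}$ off probability $O(p^{-1})$, using $\log(p-s)\ge\iota\log p$. Together these furnish the $\Upsilon_2\sqrt{\log(p-s)/n}$ part of the bound.

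For the fluctuation $R\,n^{-1}(S_{\parallel}\bbeta_{0S_0}+\bS_{\perp})$ I record the spectral concentration $\|R\|_{2,2}\lesssim\sqrt{s/n}$, valid off probability $e^{-\Omega(s)}$ (the scaling $n_{p,s}\ge\alpha$ forces $s/n\to0$, so $\widehat\bSigma$ is invertible and close to $\Ibb$), together with $\|\bS_{\perp}\|_2\lesssim\xi\sqrt{ns}$. The contribution of $R$ acting on the index part is lower order, since $n^{-1}|S_{\parallel}|\,\|R\bbeta_{0S_0}\|_2\lesssim\sqrt{\log(p-s)/n}\cdot\sqrt{s/n}$. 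To handle $R\bS_{\perp}$ I write each test vector $\eb_k=\beta_{0k}\bbeta_{0S_0}+\eb_k^{\perp}$, separating an index-direction and a perpendicular part of coordinate $k$. The index-direction part is controlled crudely and is \emph{exactly} the source of the $\Upsilon_1\frac{s}{n}\|\bbeta_{0S_0}\|_\infty$ term: since $\|\bbeta_{0S_0}\|_2=1$,
$$
\max_k\,|\beta_{0k}|\,n^{-1}|\bbeta_{0S_0}\T R\bS_{\perp}|\le\|\bbeta_{0S_0}\|_\infty\,n^{-1}\|R\|_{2,2}\|\bS_{\perp}\|_2\lesssim\xi\tfrac{s}{n}\|\bbeta_{0S_0}\|_\infty,
$$
the $\|\bbeta_{0S_0}\|_\infty$ factor being precisely what lets the operator-norm bound survive.

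The one term that resists this crude treatment, and where the real work lies, is the purely perpendicular piece $\max_k n^{-1}|(\eb_k^{\perp})\T R\bS_{\perp}|$. Here the operator-norm bound $n^{-1}\|R\|_{2,2}\|\bS_{\perp}\|_2\asymp\xi s/n$ is too large: it carries no $\|\bbeta_{0S_0}\|_\infty$ factor, so when the signal is spread out ($\|\bbeta_{0S_0}\|_\infty\asymp s^{-1/2}$) it dwarfs the allowed $\tfrac{s}{n}\|\bbeta_{0S_0}\|_\infty+\sqrt{\log(p-s)/n}$. The difficulty is genuine, and is the point where the argument departs from \cite{wainwright2009sharp}: $R$ and $\bS_{\perp}$ are coupled because both are built from the orthogonal design $\{\bX_i^{\perp}\}$, so one cannot condition on $R$ and treat $\bS_{\perp}$ as Gaussian. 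The plan is to recover conditional Gaussianity by a leave-one-out decoupling: in the $i$-th summand of $\bS_{\perp}$ replace $R$ by its leave-$i$-out analogue $R^{(-i)}$, which is independent of $\bX_i^{\perp}$, so that $(\eb_k^{\perp})\T R^{(-i)}\bX_i^{\perp}$ is, conditionally on $R^{(-i)}$ and $w_i$, mean-zero Gaussian of standard deviation $\lesssim\|R^{(-i)}\|_{2,2}\lesssim\sqrt{s/n}$. Summing, conditioning on $\sum_iw_i^2\le 2n\xi^2$, taking a Gaussian maximum over $k\le p$, and absorbing the decoupling corrections via $\|R\|_{2,2}\lesssim\sqrt{s/n}$ then yields $\max_k n^{-1}|(\eb_k^{\perp})\T R\bS_{\perp}|\lesssim\xi\sqrt{s\log p}/n\lesssim\sqrt{\log(p-s)/n}$, again using $n\ge\alpha s\log(p-s)$ and $\log p\lesssim\iota^{-1}\log(p-s)$. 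Finally, a union bound over the four events --- spectral concentration of $R$ ($e^{-\Omega(s)}$), the $S_{\parallel}$ and $\sum_iw_i^2$ Chebyshev bounds ($(\log p)^{-1}$ and $n^{-1}$), and the Gaussian maxima ($p^{-1}$) --- produces the claimed probability $1-O\{e^{-\Omega(s)}+(\log p)^{-1}+n^{-1}+p^{-1}\}$.
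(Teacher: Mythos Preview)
Your overall architecture---decompose the design row-wise into index and perpendicular parts, split $\widehat\bSigma^{-1}=\Ibb+R$, and treat the leading term by conditional Gaussianity of $\bS_\perp$ given $\bfw$---tracks the paper closely and handles the ``easy'' pieces correctly; these match the paper's use of Lemma~\ref{subgaussboundnormal} and Lemma~\ref{lemma5wain:mod}. The genuine divergence, and the place where your plan has a gap, is the hard term $(\eb_k^\perp)\T R\,\bS_\perp$.

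The paper does \emph{not} use a leave-one-out decoupling here. Instead it introduces a \emph{symmetrized} design $\widetilde\Xbb_{,S_0}$ obtained by replacing the $\bbeta_{0S_0}$-direction of $\Xbb_{,S_0}$ with an independent copy $\Xbb_{,S_0}^*\bbeta_{0S_0}$, so that $\widetilde\Xbb_{,S_0}$ is \emph{globally} independent of $\bY$. The hard term is then split into four pieces $\bI_1$--$\bI_4$: the difference of inverses $[\Xbb_{,S_0}\T\Xbb_{,S_0}]^{-1}-[\widetilde\Xbb_{,S_0}\T\widetilde\Xbb_{,S_0}]^{-1}$ is controlled by two rank-one Woodbury updates (Lemma~\ref{inftyinftynorm}), while the main piece $(n[\widetilde\Xbb_{,S_0}\T\widetilde\Xbb_{,S_0}]^{-1}-\Ibb)n^{-1}\widetilde\Xbb_{,S_0}\T\bY$ is handled by an SVD and spherical concentration (Lemma~\ref{sphereconclemma}), exploiting that the right singular vectors $\Vb$ of $\widetilde\Xbb_{,S_0}$ are Haar-distributed and independent of $\bY$. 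This wholesale replacement is what makes the conditional Gaussian/spherical argument legitimate.

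Your leave-one-out step, as written, does not achieve this. After replacing $R$ by $R^{(-i)}$ in the $i$-th summand you obtain $\sum_i w_i(\eb_k^\perp)\T R^{(-i)}\bX_i^\perp$, but this sum is \emph{not} conditionally Gaussian in any useful $\sigma$-field: to make the $i$-th term Gaussian you must condition on $\{\bX_j^\perp\}_{j\neq i}$, yet that conditioning fixes $\bX_i^\perp$ inside every $R^{(-j)}$ for $j\neq i$, so the terms remain coupled and you cannot invoke a Gaussian maximal inequality over $k$. Moreover, the ``decoupling corrections'' $\sum_i w_i(\eb_k^\perp)\T(R-R^{(-i)})\bX_i^\perp$ are rank-one Sherman--Morrison updates of size $\asymp s/n$ each, and a crude bound on their sum gives order $\xi s/n$ (or worse) with \emph{no} $\|\bbeta_{0S_0}\|_\infty$ factor, which is exactly the size you correctly flagged as inadmissible for the main term. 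So both halves of the leave-one-out split are problematic. The paper's symmetrization avoids this because the replacement is a single rank-one perturbation (in the $\bbeta_{0S_0}$ direction) rather than $n$ separate ones, and the Woodbury bookkeeping in Lemma~\ref{inftyinftynorm} produces precisely the $\sqrt{s}\big(\tfrac{s}{n}\|\bbeta_{0S_0}\|_\infty+\sqrt{\log p/n}\big)$ control that, after multiplication by $\|(\Ibb-\bbeta_{0S_0}\bbeta_{0S_0}\T)n^{-1}\Xbb_{,S_0}\T\bY\|_\infty\lesssim\sqrt{\log p/n}$, lands in the allowed budget.
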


While Lemma \ref{superimportantlemma} is stated in terms of standard multivariate normal distribution $\cN(0, \Ibb_{s \times s})$, we can easily adapt it to more general situations where we observe non-standard normal random variables $\cN(0, \bSigma_{S_0,S_0})$. Recall that the rows of $\Xbb_{,S_0}$ are distributed as $\cN(0, \bSigma_{S_0,S_0})$, $Y_i = f(\bX_i\T\bbeta_0, \varepsilon)$, and $\bbeta_{0S_0}\T \bSigma_{S_0,S_0} \bbeta_{0S_0} = 1$. Denote with $\bfZ = \Xbb_{,S_0} \bSigma^{-1/2}_{S_0,S_0}$. Then we have the following inequality, with high probability:
\begin{align*}
\bI_2 = \|[\Xbb\T_{,S_0} \Xbb_{,S_0}]^{-1} \Xbb_{,S_0}\T \bY - c_0 \bbeta_{0S_0}\|_{\infty} & = \|\bSigma^{-1/2}_{S_0,S_0}[\bfZ\T \bfZ]^{-1} \bfZ\T \bY - c_0 \bbeta_{0S_0}\|_{\infty}\\
&  \leq \|\bSigma^{-1/2}_{S_0,S_0}\|_{\infty, \infty} \|[\bfZ\T \bfZ]^{-1}  \bfZ\T \bY - c_0 \bSigma^{1/2}_{S_0,S_0} \bbeta_{0S_0}\|_{\infty}\\
& \leq  \|\bSigma^{-1/2}_{S_0,S_0}\|_{\infty, \infty} \delta(\|\bSigma^{1/2}_{S_0,S_0}\bbeta_{0S_0}\|_{\infty}, s, n, p).
\end{align*}
The last two inequalities imply that:
$$
\max_{j \in S} |\Delta_j| \leq  \lambda K_1 \|\bSigma^{-1/2}_{S_0,S_0}\|^2_{\infty,\infty} + \|\bSigma^{-1/2}_{S_0,S_0}\|_{\infty, \infty} \delta(\|\bSigma^{1/2}_{S_0,S_0}\bbeta_{0S_0}\|_{\infty}, s, n, p).
$$
Hence as long as for $\bbeta_0^{\min} = \min\{|\beta_{0j}|: j \in S_0\}$ we have:
$$
|c_0|\bbeta_0^{\min} \geq  \lambda K_1 \|\bSigma^{-1/2}_{S_0,S_0}\|^2_{\infty,\infty} + \|\bSigma^{-1/2}_{S_0,S_0}\|_{\infty, \infty} \delta( \|\bSigma^{1/2}_{S_0,S_0}\|_{\infty,\infty} \|\bbeta_0\|_{\infty}, s, n, p),
$$
the LASSO will recover the support with high-probability. This concludes the proof.

\section{Numerical Studies} \label{simulations:sec}To support our theoretical claims, and in particular Theorem \ref{mainresult} we provide brief numeric analysis in this section. We consider the following models:
\begin{align}
Y & = \bX\T\bbeta_0 + \sin(\bX\T\bbeta_0) + \cN(0,1), \label{sinmodel}\\
Y & = 2 \operatorname{atan}(\bX\T\bbeta_0) + \cN(0,1), \label{X3model} \\
Y & = (\bX\T\bbeta_0)^3 + \cN(0,1), \label{Xe3model}\\
Y & = \operatorname{sinh}(\bX\T\bbeta_0)+ \cN(0,1). \label{regmodel}
\end{align}
We use a Toeplitz covariance matrix for the simulations $\bSigma = \eye$ and $\Sigma_{kj} = 2^{-|k - j|}$. The vector $\bbeta_0$ is selected so that $\bbeta_0\T\bSigma \bbeta_0 = 1$, and its entries have equal magnitude, with the first one having a negative sign, and the remaining being positive. We check whether the solution path of the LASSO contains an $s$-sparse vector $\bbeta_0$ whose support coincides with the support of $\bbeta_0$. This verifies the validity of one implication of our theory, as it shows that the solution path indeed contains the true signed support of $\bbeta_0$. 

\begin{figure}[H]
\caption{LASSO, $s = \sqrt{p}$, $\bSigma = \eye_{p \times p}$} \label{figuressqrtplasso:sigmaid}
\begin{subfigure}{.5\textwidth}
  \centering
  \includegraphics[width=.7\linewidth]{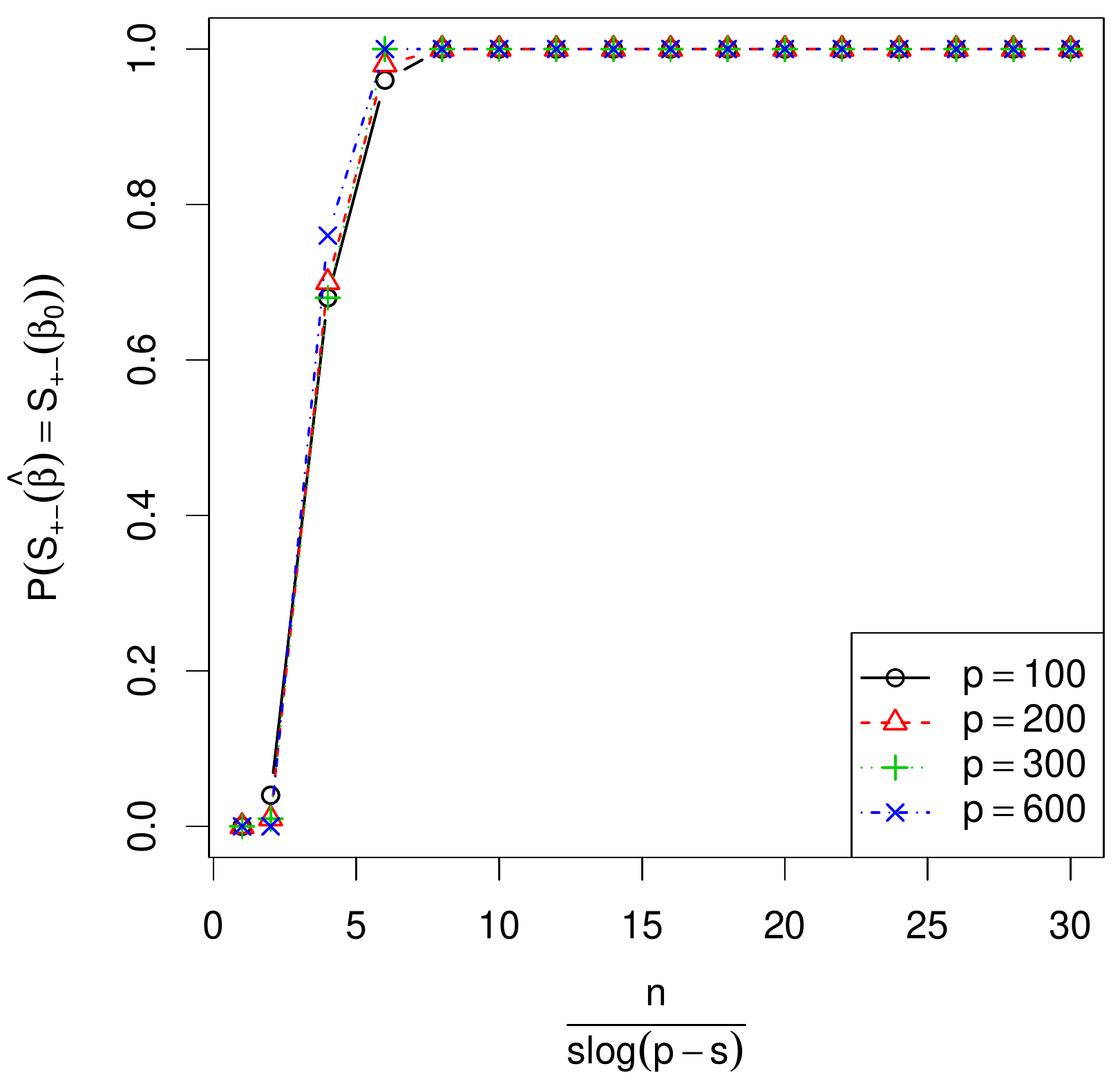}
  \caption{Model (\ref{sinmodel})}
  \label{fig:sfig1}
\end{subfigure}%
\begin{subfigure}{.5\textwidth}
  \centering
  \includegraphics[width=.7\linewidth]{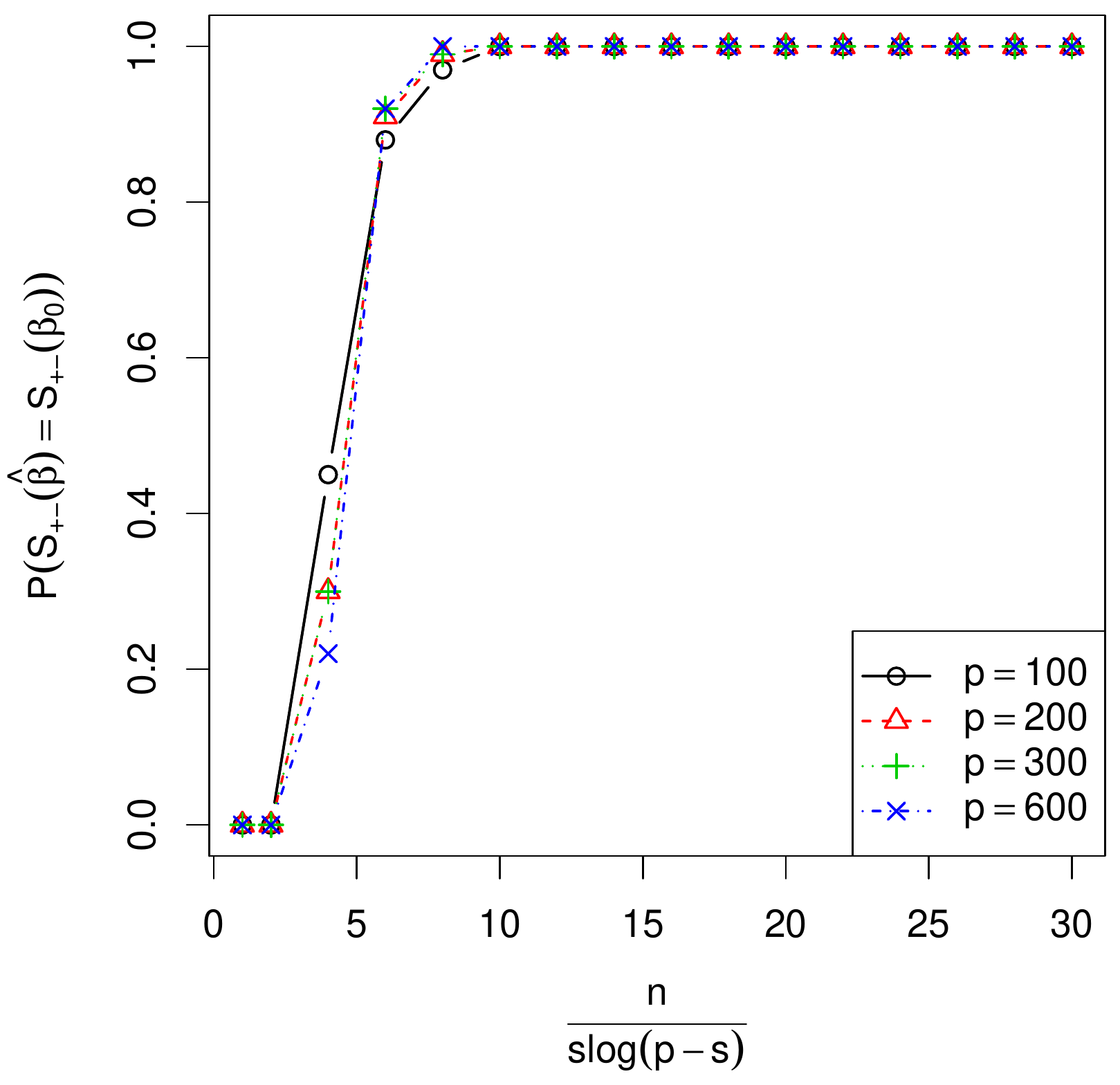}
  \caption{Model (\ref{X3model})}
  \label{fig:sfig2}
\end{subfigure}%

\begin{subfigure}{.5\textwidth}
  \centering
  \includegraphics[width=.7\linewidth]{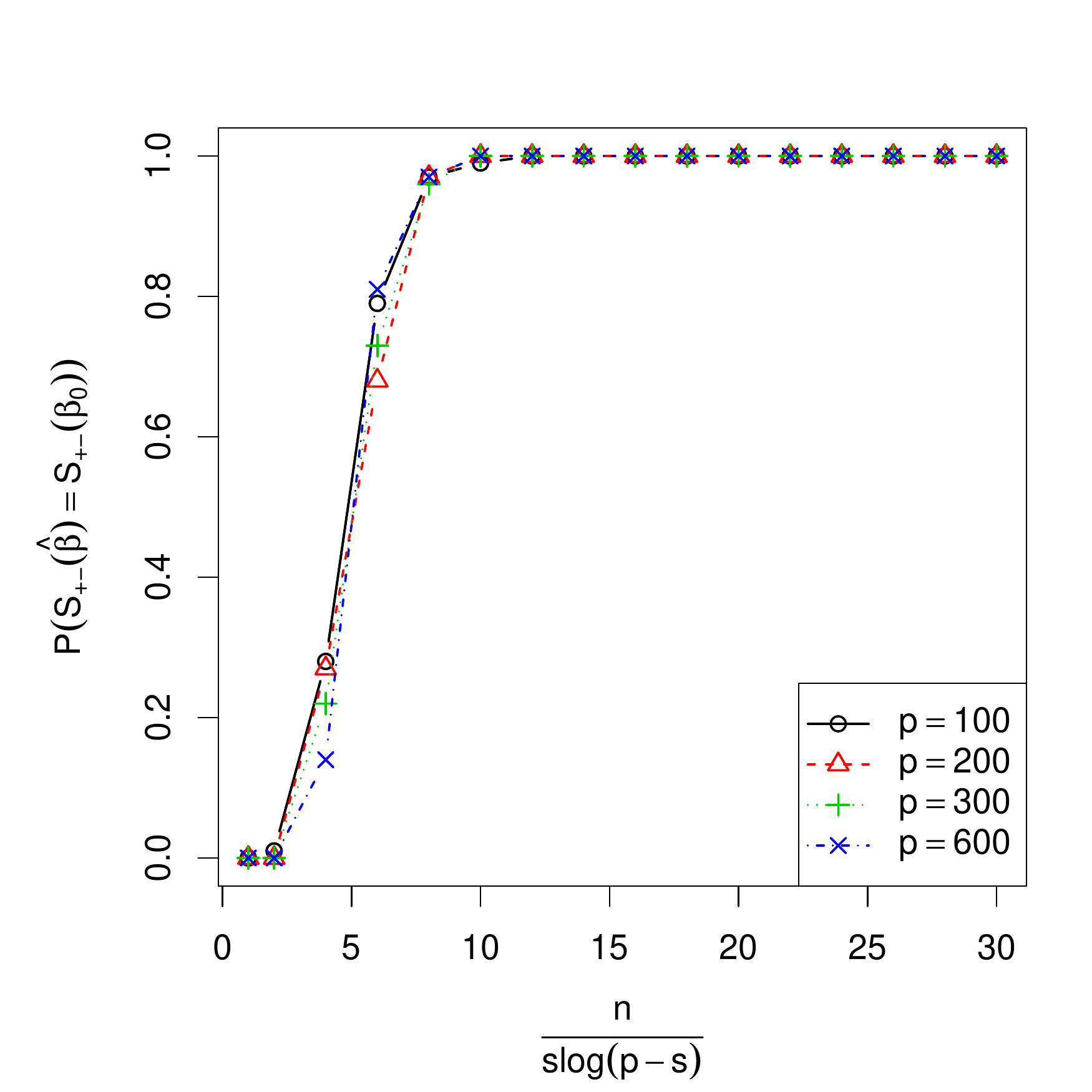}
  \caption{Model (\ref{Xe3model})}
  \label{fig:sfig3}
\end{subfigure}%
\begin{subfigure}{.5\textwidth}
  \centering
  \includegraphics[width=.7\linewidth]{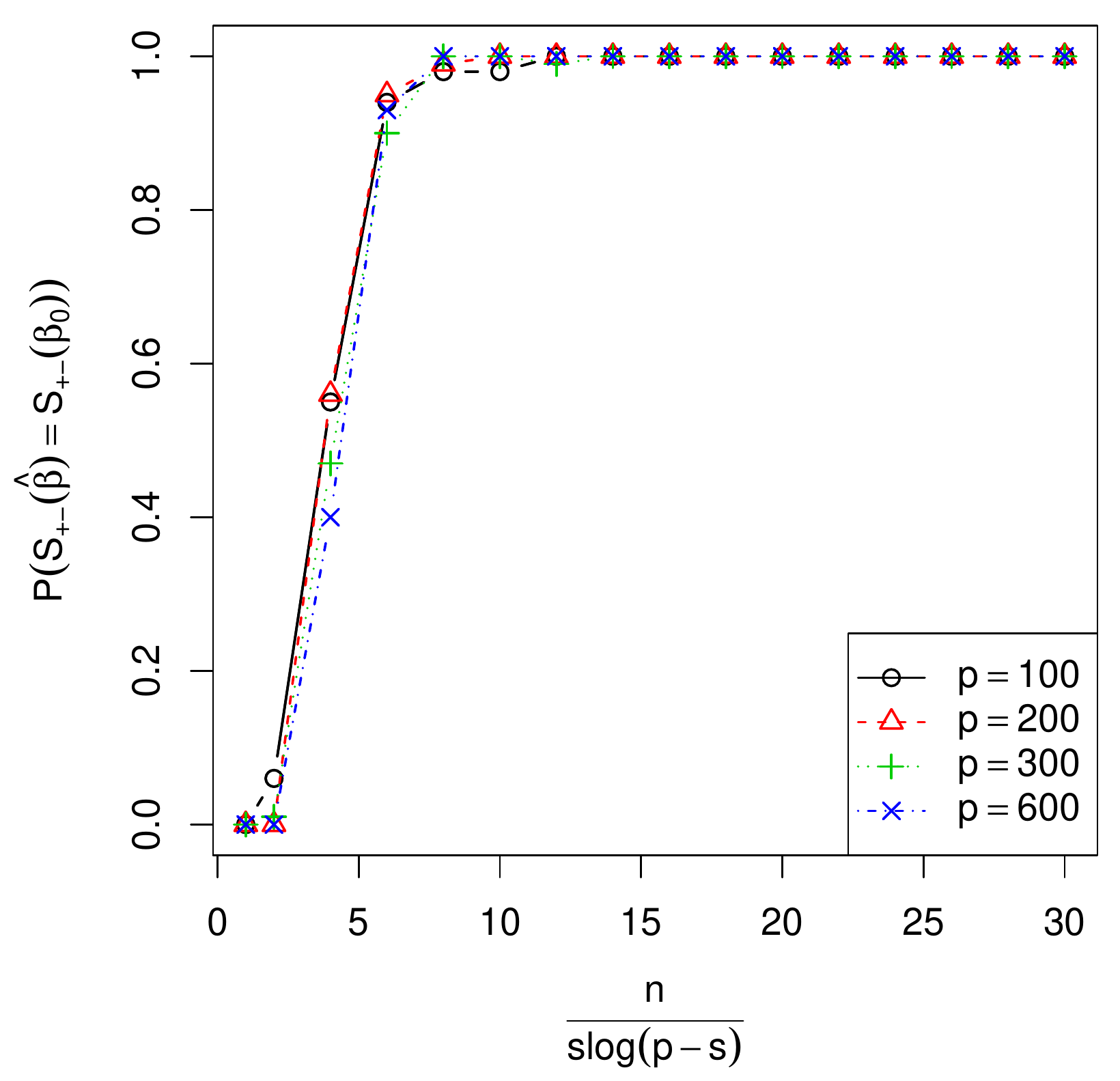}
  \caption{Model (\ref{regmodel})}
  \label{fig:sfig4}
\end{subfigure}%
\end{figure}

In Figure \ref{figuressqrtplasso:sigmaid}, we show results of signed support recovery for different values of $p$, in the regime $s = \sqrt{p}$ in the case of an identity covariance matrix $\bSigma = \eye_{p \times p}$. Similarly, Figure \ref{figuressqrtplasso} shows results for the case of Toeplitz covaraince matrix $\Sigma_{kj} = 2^{-|k - j|}$. As expected, the support recovery is harder in the presence of correlation between the variables. These results illustrate different phase transitions occurring for the four different models. We observe empirically that values of $n_{p,s}$ achieving reasonable success probability can be large in some cases. It could be the case that using a transformed version of $Y$ 
might lead to better results for the model complexity adjusted sample size, as suggested in Section \ref{outcome:transf:sec}. Figure \ref{figuressqrtplasso} supports the result of Theorem \ref{mainresult} as all curves merge when the effective sample size $n_{p,s}$ becomes sufficiently large. In addition, these results suggest that the performance of the support recovery is largely determined by $(n,p,s)$ through the magnitude of $n_{p,s}$. 

\begin{figure}[H]
\caption{LASSO, $s = \sqrt{p}$, $\Sigma_{kj} = 2^{-|k-j|}$} \label{figuressqrtplasso}
\begin{subfigure}{.5\textwidth}
  \centering
  \includegraphics[width=.7\linewidth]{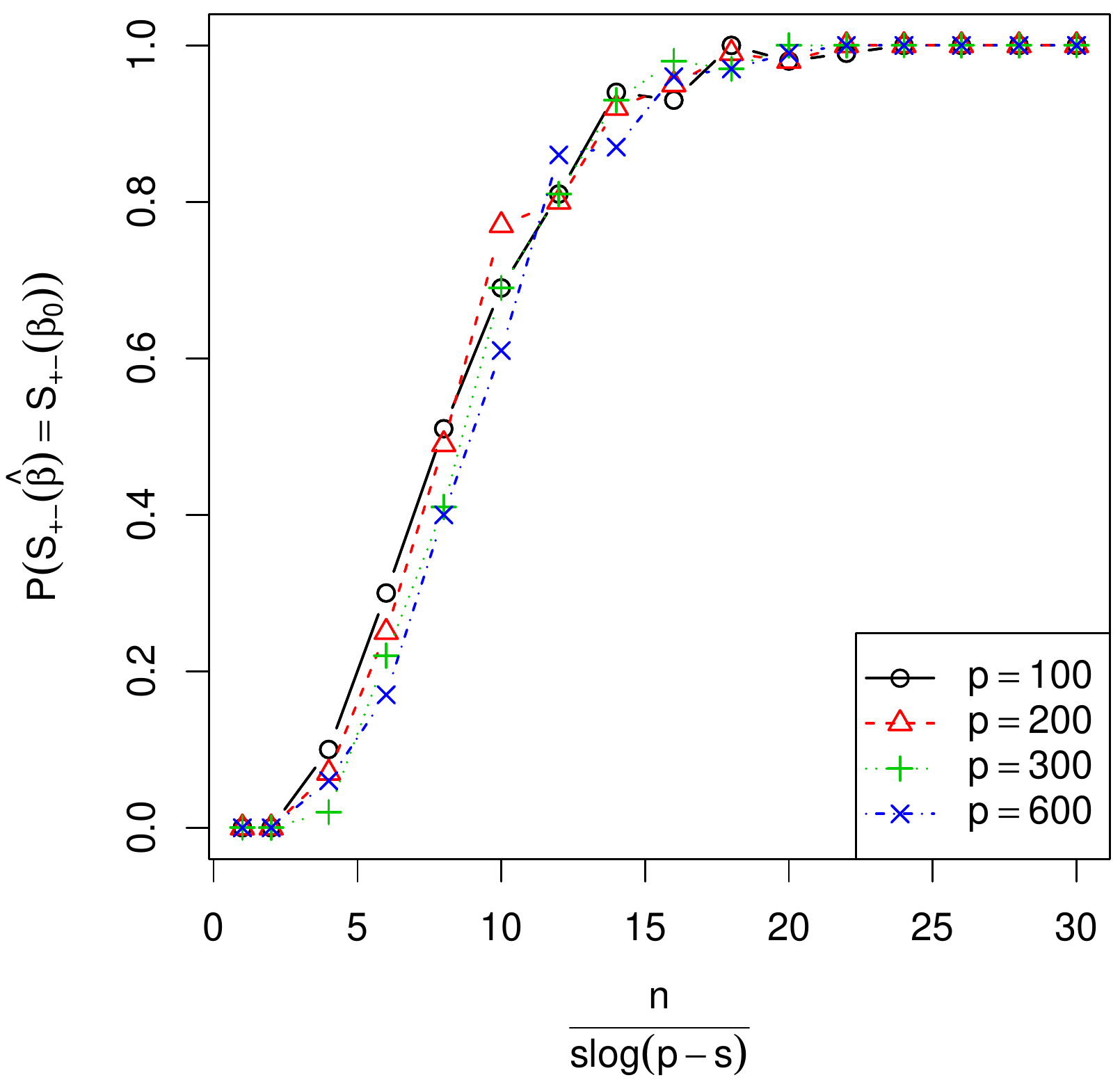}
  \caption{Model (\ref{sinmodel})}
  \label{fig:sfig1}
\end{subfigure}%
\begin{subfigure}{.5\textwidth}
  \centering
  \includegraphics[width=.7\linewidth]{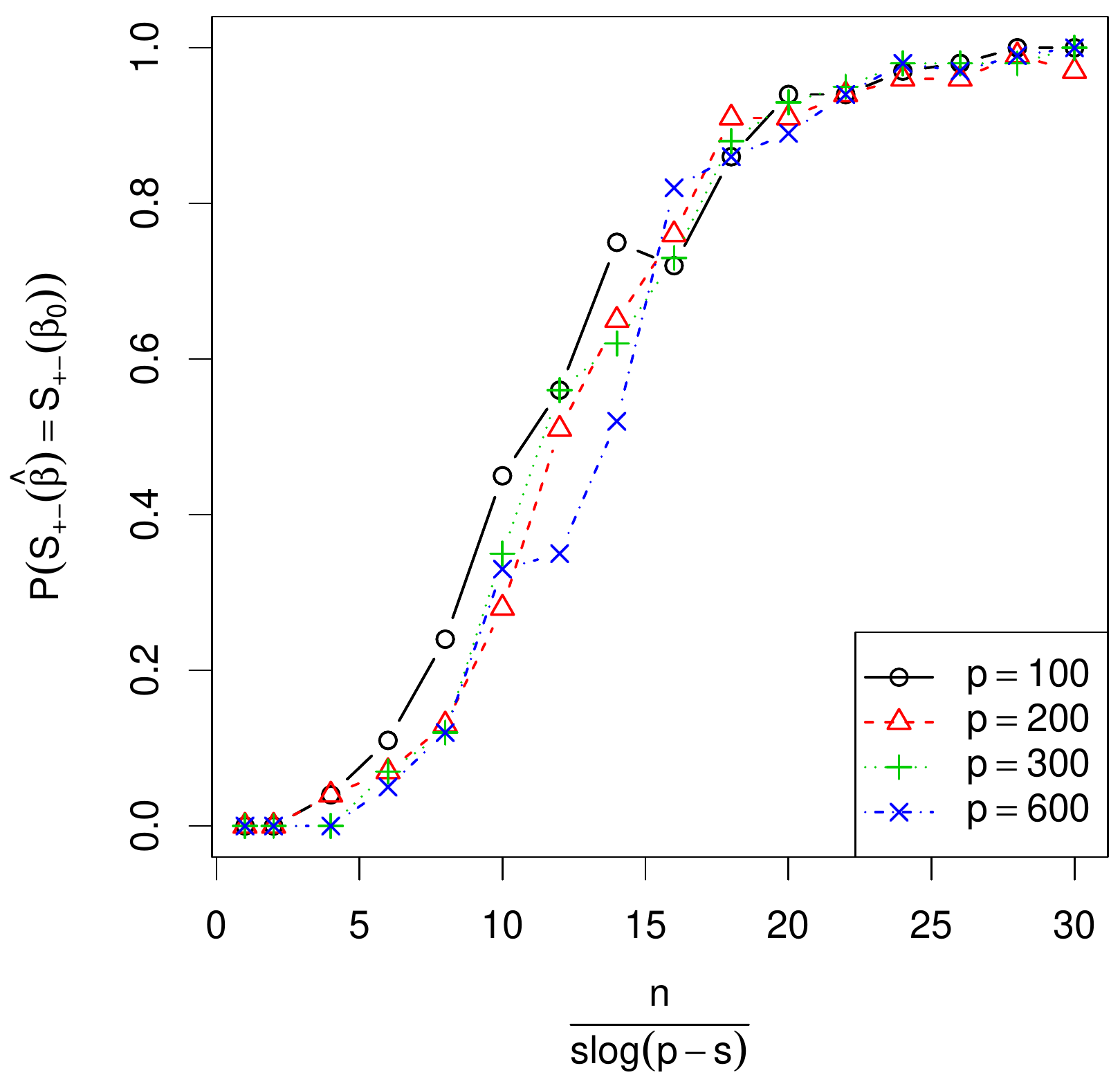}
  \caption{Model (\ref{X3model})}
  \label{fig:sfig2}
\end{subfigure}%

\begin{subfigure}{.5\textwidth}
  \centering
  \includegraphics[width=.7\linewidth]{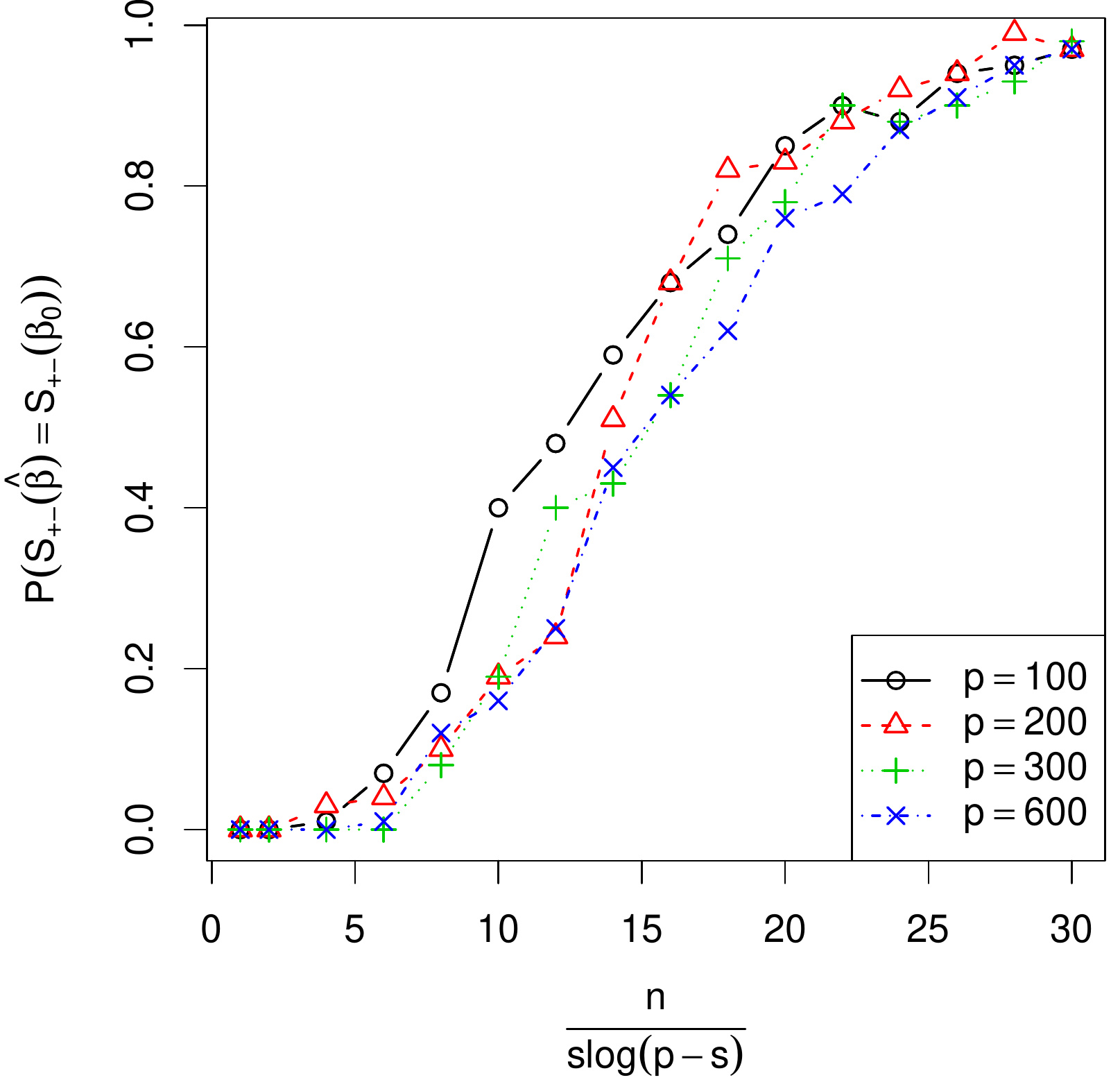}
  \caption{Model (\ref{Xe3model})}
  \label{fig:sfig3}
\end{subfigure}%
\begin{subfigure}{.5\textwidth}
  \centering
  \includegraphics[width=.7\linewidth]{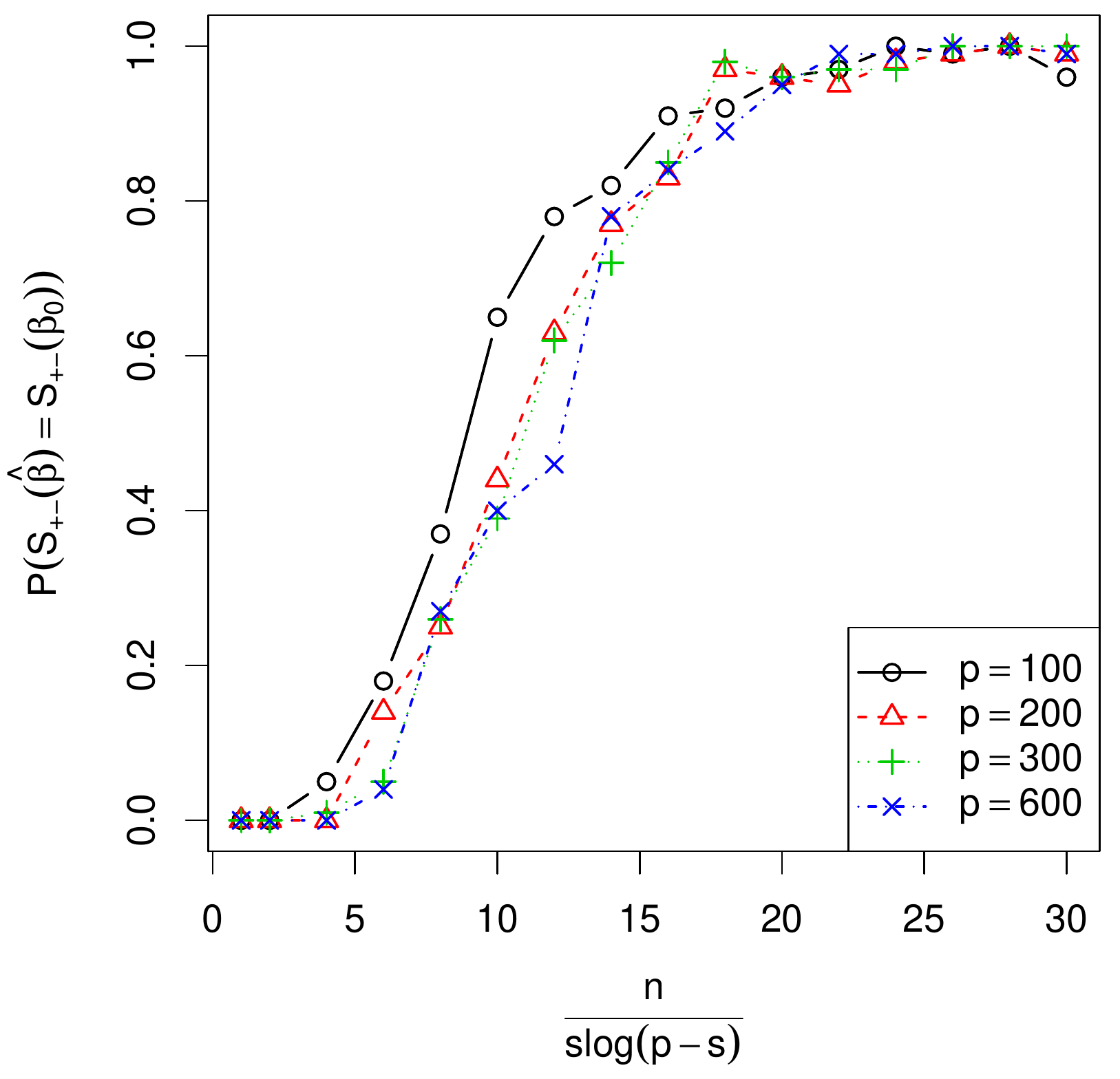}
  \caption{Model (\ref{regmodel})}
  \label{fig:sfig4}
\end{subfigure}%
\end{figure}

In addition to the verification of our theory, we also compare the vanilla least squares LASSO to  a \textit{version} of the sparse sliced inverse regression (SSIR) algorithm suggested by \cite{li2006sparse}. The SSIR algorithm is also based on a LASSO estimation. In its original form however, this algorithm is not applicable for high-dimensional settings such as ours, since it needs an estimate of the matrix $\bSigma^{-1/2}$. To make use of the SSIR under the high-dimensional setting, we estimate $\bSigma^{-1/2}$ by the CLIME procedure \citep{cai2011constrained} to $\hat \bSigma^{1/2}$ under sparsity assumptions. Due to space considerations we only show comparisons for models (\ref{sinmodel}) and (\ref{regmodel}), and the plots are attached in Appendix \ref{more:plots}. In the majority of cases LASSO outperforms the SSIR algorithm substantially for small values of $n_{p,s}$, although it seems that both approaches reach perfect support recovery at similar rescaled sample sizes. We would like to emphasize that the SSIR algorithm requires solving an extra optimization problem, and that furthermore, there are no theoretical results ensuring that SSIR in general recovers the support. On an important note, the SSIR algorithm is designed to estimate the central space of the more general class of multi-index models, which we do not discuss in the present paper. For a brief discussion on how our work can be related to multi-index model please refer to section \ref{discussion:sec}.


\section{Discussion}\label{discussion:sec}
In this paper, we demonstrate that under a high dimensional SIM, $L_1$-regularized least squares, including a simplified covariance screening procedure under orthogonal design, is robust in terms of model selection consistency, in that it correctly recovers the support of the true regression parameter $\bbeta_0$ provided that $c_0 = \EE(Y \bX\T \bbeta_0) \neq 0$, the minimal signal strength is sufficiently large and $\bX \sim \cN(0, \bSigma)$ under standard assumptions on $\bSigma$ which are necessary even in the linear regression case. Thus, our results extend known results on the support recovery performance of LASSO under linear models to a much broader class of SIMs.
We furthermore demonstrate that the support recovery is achieved in a sample size optimal $n_{p,s}$ manner within a certain class of SIMs. 

As we indicated in section \ref{outcome:transf:sec}, the assumption $c_0 \neq 0$ does not always hold, and in addition it cannot be easily verified. A potential remedy for this approach will be to transform the $Y$ variable. From theoretical point of view it is of interest to develop procedures which can adaptively estimate a ``good'' outcome transformation. Additionally, a downside of the $L_1$-regularization is the fact that the irrepresentable condition on the covariance matrix is unavoidable. This could potentially be remedied by using more general and non-convex penalties such as the SCAD penalty \citep{fan2001variable}. We focused on the setting with $\bX \sim \cN(0, \bSigma)$, however we suspect that the support recovery holds in more general cases where $\bX$ comes from an elliptical distribution. It is less clear, however, whether sample size optimality continues to hold in such situations, as we crucially rely on the normality of $\bX$, in particular when using Lemma \ref{cor535versh} which follows by Gordon's comparison theorem, and through numerous projection-independence properties which are characteristic of the Gaussian distribution. 

The proposed method focuses on SIMs as the true underlying model. Extensions to incorporate general multi-index models are not straightforward. For the special case of multi-index models of the form
$$
Y = \sum_{j = 1}^k f_j(\bX\T \bbeta_j, \varepsilon_j),
$$
our method should also be able to recover the support, assuming that $k$ is fixed and the vectors $\bbeta_j$ have disjoint supports, and $\EE (Y \bX\T \bbeta_j) \neq 0$ for all $j \in [k]$. When applied to such a model, the LASSO estimate $\hat \bbeta$ will include the union of the supports of $\bbeta_j$, provided that sufficiently strong minimal signal (or sufficiently large sample size) and an irrepresantable condition are present.  How to apply the LASSO algorithm for support recovery under the more general class of multi-index models warrants further research. 

A note on the choice of the tuning parameter $\lambda$ in practice is also in order. According to Remark \ref{signal:strength:remark} and optimal choice of $\lambda$ may depend on the unknowable parameter $\xi^2$. A procedure which we found to work well in practice is based on simple cross validation. To find a good tuning parameter $\lambda$, from a grid of $\ell$ values of $\lambda$: $\{\lambda_1,\ldots, \lambda_\ell\}$ which are of $\sqrt{\frac{\log p}{n}}$ magnitude, we recommend using $K$-fold cross validation, and setting $\lambda$ to the value minimizing the average least squares loss across (i.e. mean squared-error) the $K$ folds. Notice that according to Lemma \ref{proplemma}, this criteria is very sensible. An important question is whether one can arrive at a procedure with theoretical guarantees for $\lambda$ selection, and we hope to address this problem in our future work.

Finally, under a SIM and proper distributional assumptions on $\bX$, one may also recover $\bbeta_0$ proportionally using other convex loss functions. For example, when $Y$ is binary, the logistic log-likelihood loss may be more efficient than the $L_2$ loss. Thus, it is of interest to investigate the support recovery properties of the LASSO (or more general penalization procedures) with other convex losses --- such as the logistic/hinge losses, which could be less susceptible to outliers.

\section{Acknowledgments}
The authors would like to thank the Associate Editor and anonymous reviewers for their insightful remarks which led to the improvement of this manuscript. We are also grateful to Professor Noureddine El Karoui, who among other observations, brought to our attention that outcome transformations may be performed to facilitate the usage of LASSO even when $c_0 = 0$ for the original outcome $Y$. This research was partially supported by Research Grants NSF DMS1208771, NIH R01 GM113242-01, NIH U54 HG007963 and NIH RO1 HL089778.

\newpage

\appendix

\section{Auxiliary Lemmas} 

In this section, for convenience of the reader, we state couple of lemmas that we use often in our analysis.

\begin{lemma}[Corollary 5.35 \cite{vershynin2010introduction}] \label{cor535versh} Let $\Ab_{n \times s}$ matrix whose entries are i.i.d. standard normal random variables. Then for every $t \geq 0$, with probability at least $1 - 2 \exp(-t^2/2)$ one has:
$$
\sqrt{n} - \sqrt{s} - t \leq s_{\min}(\Ab) \leq s_{\max}(\Ab) \leq \sqrt{n} + \sqrt{s} + t,
$$
where $s_{\min}(\Ab)$ and $s_{\max}(\Ab)$ are the smallest and largest singular values of $\Ab$ correspondingly.
\end{lemma}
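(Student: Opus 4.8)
The plan is to realize the two extreme singular values as the values of a canonical Gaussian min--max optimization, bound their expectations through a Gaussian comparison inequality of Gordon type, and then upgrade these expectation bounds to the stated tail bounds using Gaussian concentration for Lipschitz functions. To set up the process, view the entries of $\Ab$ as a standard Gaussian vector in $\RR^{ns}$ and consider the centered Gaussian process $X_{u,v} := u\T \Ab v$ indexed by $(u,v)$ in the product of unit spheres $S^{n-1} \times S^{s-1}$. The variational characterizations of singular values give $s_{\max}(\Ab) = \max_{u,v} X_{u,v}$ and, because $s < n$, $s_{\min}(\Ab) = \min_{\|v\|=1}\|\Ab v\|_2 = \min_{v}\max_{u} X_{u,v}$. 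As a comparison process I would take $Y_{u,v} := g\T u + h\T v$ with $g \sim \cN(0,\Ibb_n)$ and $h \sim \cN(0,\Ibb_s)$ independent, for which $\max_{u,v} Y_{u,v} = \|g\|_2 + \|h\|_2$ and $\min_{v}\max_{u} Y_{u,v} = \|g\|_2 - \|h\|_2$.

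Next I would verify the increment comparison that drives both bounds. A direct computation gives $\e(X_{u,v} - X_{u',v'})^2 = 2 - 2(u\T u')(v\T v')$ and $\e(Y_{u,v} - Y_{u',v'})^2 = (2 - 2 u\T u') + (2 - 2 v\T v')$, whence $\e(Y_{u,v}-Y_{u',v'})^2 - \e(X_{u,v}-X_{u',v'})^2 = 2(1 - u\T u')(1 - v\T v') \ge 0$, with equality whenever $v = v'$. The global inequality is exactly the hypothesis of the Sudakov--Fernique inequality, which yields $\e\, s_{\max}(\Ab) = \e\max_{u,v} X_{u,v} \le \e\max_{u,v} Y_{u,v} = \e\|g\|_2 + \e\|h\|_2 \le \sqrt n + \sqrt s$, the last step by Jensen. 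The refinement that the two increments coincide for $v = v'$ is precisely what Gordon's min--max comparison inequality requires, and it gives $\e\, s_{\min}(\Ab) = \e\min_{v}\max_{u} X_{u,v} \ge \e\min_{v}\max_{u} Y_{u,v} = \e\|g\|_2 - \e\|h\|_2$. To turn this into the clean constant $\sqrt n - \sqrt s$, I would use the elementary fact that $\phi(m) := \sqrt m - \e\|\xi_m\|_2$, for $\xi_m \sim \cN(0,\Ibb_m)$, is nonnegative and nonincreasing in $m$; since $s \le n$ this gives $\phi(n) \le \phi(s)$, i.e. $\e\|g\|_2 - \e\|h\|_2 \ge \sqrt n - \sqrt s$.

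Finally I would pass from expectations to tails by concentration. Each of the maps $\Ab \mapsto s_{\max}(\Ab)$ and $\Ab \mapsto s_{\min}(\Ab)$ is $1$-Lipschitz with respect to the Frobenius norm on $\RR^{ns}$, being a supremum (respectively an infimum over $v$ of suprema) of the linear forms $\Ab \mapsto \langle \Ab, u v\T\rangle$ whose representing matrices $u v\T$ have unit Frobenius norm. The Gaussian concentration inequality for Lipschitz functions then gives, for each $t \ge 0$, $\p(s_{\max} \ge \e\, s_{\max} + t) \le e^{-t^2/2}$ and $\p(s_{\min} \le \e\, s_{\min} - t) \le e^{-t^2/2}$. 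Combining with the expectation bounds and taking a union bound over these two events produces the two-sided conclusion with failure probability at most $2 e^{-t^2/2}$.

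The step I expect to be the main obstacle is the Gaussian comparison: both orienting the inequalities correctly, so that the max is controlled from above while the min--max is controlled from below, and checking that the increment structure meets the precise hypotheses of Gordon's theorem (global domination of increments together with equality along a fixed outer index $v$). Extracting the exact constant $\sqrt n - \sqrt s$ from $\e\|g\|_2 - \e\|h\|_2$ via the monotonicity of $\phi$ is the other delicate point; the concentration step is routine once the $1$-Lipschitz property is recorded.
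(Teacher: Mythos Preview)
Your proof is correct and is essentially Vershynin's own argument for Corollary 5.35: Sudakov--Fernique for the top singular value, Gordon's min--max comparison for the bottom one, followed by Gaussian concentration for $1$-Lipschitz functionals. The paper does not supply a proof of this lemma at all; it merely records the statement as an auxiliary result cited from \cite{vershynin2010introduction}, so there is nothing to compare against beyond noting that you have reproduced the source argument faithfully, including the monotonicity of $m \mapsto \sqrt{m} - \e\|\xi_m\|_2$ needed to clean up the lower bound.
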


\begin{lemma} \label{lemma5wain:mod} Consider a \underline{fixed} nonzero vector $\zb \in \RR^s$ and a random matrix $\Ab_{n \times s}$, whose entries are i.i.d. standard normal random variables. If $p,s,n$ are such that $s \geq 2$, $\frac{s}{n} \leq \frac{1}{64}$ and $\frac{\log p}{n -s +1} \leq \frac{1}{32}$, there are positive absolute constants $C_1$ and $C_2$ satisfying:
$$
\p\left(\|[(n^{-1}\Ab\T \Ab)^{-1} - \eye_{s \times s}] \zb \|_{\infty} \geq C_1\frac{s}{n}\|\zb\|_{\infty} + C_2\|\zb\|_2 \sqrt{\frac{\log p}{n}} \right) \leq 4 p^{-1}.
$$
\end{lemma}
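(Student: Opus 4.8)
The plan is to prove the bound coordinate by coordinate and then take a union bound over the $s \le p$ coordinates. Write $\widehat{\bSigma} := n^{-1}\Ab\T\Ab$, fix $k \in [s]$, and partition the coordinates into $\{k\}$ and its complement $-k := [s]\setminus\{k\}$. The block-inversion (Schur complement) formula gives
\[
\bigl[(\widehat{\bSigma}^{-1} - \eye)\zb\bigr]_k = \frac{z_k(1 - d_k) - \bb_k\T\zb_{-k}}{d_k},
\]
where $d_k := \widehat{\Sigma}_{kk} - \widehat{\bSigma}_{k,-k}\widehat{\bSigma}_{-k,-k}^{-1}\widehat{\bSigma}_{-k,k} = n^{-1}\|\Pb_{-k}^\perp \Ab_{,k}\|_2^2$ is the Schur complement ($\Pb_{-k}^\perp$ projecting off the span of the remaining columns $\Ab_{,-k}$) and $\bb_k := \widehat{\bSigma}_{-k,-k}^{-1}\widehat{\bSigma}_{-k,k}$ is the in-sample regression coefficient of column $k$ on the rest. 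The whole argument reduces to controlling the two scalars $d_k$ and $\bb_k\T\zb_{-k}$.

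The distributional inputs are clean because of the Gaussianity of $\Ab$. Conditionally on $\Ab_{,-k}$, the column $\Ab_{,k}\sim\cN(0,\eye_n)$ is independent of the rest, so $n d_k = \|\Pb_{-k}^\perp\Ab_{,k}\|_2^2 \sim \chi^2_{n-s+1}$, while $\bb_k = (\Ab_{,-k}\T\Ab_{,-k})^{-1}\Ab_{,-k}\T\Ab_{,k}\mid\Ab_{,-k}\sim\cN(0,(\Ab_{,-k}\T\Ab_{,-k})^{-1})$, whence $\bb_k\T\zb_{-k}\mid\Ab_{,-k}\sim\cN(0,\zb_{-k}\T(\Ab_{,-k}\T\Ab_{,-k})^{-1}\zb_{-k})$. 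I would then record three high-probability events, each at level $O(p^{-2})$ so that the union over $k\in[s]$ costs only $O(p^{-1})$: (i) a $\chi^2$ tail bound giving $|d_k - \tfrac{n-s+1}{n}| \lesssim \sqrt{\log p/n}$ together with $d_k \ge \tfrac12$; (ii) the lower singular-value bound of Lemma \ref{cor535versh} applied to the $n\times(s-1)$ matrix $\Ab_{,-k}$ with $t\asymp\sqrt{\log p}$, which under $s/n\le 1/64$ and $\log p/(n-s+1)\le 1/32$ yields $s_{\min}(\Ab_{,-k})\gtrsim\sqrt n$, hence $\zb_{-k}\T(\Ab_{,-k}\T\Ab_{,-k})^{-1}\zb_{-k} \lesssim \|\zb\|_2^2/n$; (iii) a Gaussian tail bound which, combined with (ii), gives $|\bb_k\T\zb_{-k}|\lesssim\|\zb\|_2\sqrt{\log p/n}$.

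On the intersection of these events I would combine using $d_k \ge \tfrac12$ and $1 - d_k = \tfrac{s-1}{n} + (\tfrac{n-s+1}{n} - d_k)$:
\[
\bigl|[(\widehat{\bSigma}^{-1}-\eye)\zb]_k\bigr| \le 2\Bigl(|z_k|\tfrac{s-1}{n} + |z_k|\,\bigl|d_k - \tfrac{n-s+1}{n}\bigr| + |\bb_k\T\zb_{-k}|\Bigr).
\]
The first summand is $\le \tfrac{s}{n}\|\zb\|_\infty$, producing the $C_1\tfrac{s}{n}\|\zb\|_\infty$ term; the second is $\le \|\zb\|_\infty\sqrt{\log p/n} \le \|\zb\|_2\sqrt{\log p/n}$ and the third is $\lesssim \|\zb\|_2\sqrt{\log p/n}$, together producing the $C_2\|\zb\|_2\sqrt{\log p/n}$ term. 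A union bound over the $s\le p$ coordinates then yields the claim with probability at least $1 - 4p^{-1}$ after choosing the constants in the tail thresholds appropriately.

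The main obstacle --- and the reason the statement separates $\|\zb\|_\infty$ from $\|\zb\|_2$ --- is the $\tfrac sn\|\zb\|_\infty$ term. It is a genuine bias effect: the Schur complement $d_k$ concentrates not around $1$ but around $\tfrac{n-s+1}{n} = 1 - \tfrac{s-1}{n}$, and this deterministic shift is what creates the $\tfrac{s}{n}$ factor. Any cruder route that bounds the remainder of a Neumann expansion of $\widehat{\bSigma}^{-1}-\eye$ through the operator norm $\|\widehat{\bSigma}^{-1}-\eye\|_{2,2}\lesssim\sqrt{s/n}$ conflates this bias with the fluctuations and yields the weaker $\tfrac sn\|\zb\|_2$, which is not absorbable into the target in general; the coordinate-wise Schur-complement representation is precisely what isolates the bias along the direction $\zb$ so that it is paid for in $\|\zb\|_\infty$ rather than $\|\zb\|_2$. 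The remaining care is bookkeeping the tail levels at $O(p^{-2})$ so that, despite the $s$-fold union bound, the final probability matches $4p^{-1}$, which is exactly why the hypotheses are stated as $s/n\le\tfrac1{64}$ and $\log p/(n-s+1)\le\tfrac1{32}$.
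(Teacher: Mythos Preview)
Your argument is correct and the decomposition into diagonal bias $z_k(1-d_k)/d_k$ and off-diagonal fluctuation $-\bb_k\T\zb_{-k}/d_k$ is exactly the right split; these are in fact the same two pieces the paper isolates, but the techniques used to control the off-diagonal term differ substantially. The paper works through the spectral decomposition $(n^{-1}\Ab\T\Ab)^{-1}-\eye=\Vb\Db\Vb\T$ with $\Vb$ Haar-distributed and independent of $\Db$, writes the $i$th coordinate as $z_i\vb_i\T\Db\vb_i+\vb_i\T\Db\sum_{j\neq i}z_j\vb_j$, handles the diagonal via the inverse-Wishart marginal $n\chi^{-2}(n-s+1)-1$ and Laurent--Massart, and bounds the cross term by showing it is Lipschitz in $\vb_i$ with constant $\lesssim\sqrt{s/n}\,\|\zb\|_2$ and invoking concentration of Lipschitz functions on the sphere. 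Your route replaces the spectral machinery and sphere concentration by the Schur-complement identity, the exact conditional law $\bb_k\mid\Ab_{,-k}\sim\cN\bigl(0,(\Ab_{,-k}\T\Ab_{,-k})^{-1}\bigr)$, and a one-dimensional Gaussian tail bound after controlling the conditional variance via Lemma~\ref{cor535versh}. This is more elementary and sidesteps the somewhat delicate step of treating $\vb_i$ as a free variable on the sphere while $\{\vb_j\}_{j\neq i}$ are held fixed; it also makes the appearance of $\|\zb\|_2$ (via the conditional variance $\zb_{-k}\T(\Ab_{,-k}\T\Ab_{,-k})^{-1}\zb_{-k}$) completely transparent. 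The paper's approach, on the other hand, keeps the rotational invariance front and center and follows the template of Wainwright's Lemma~5 more closely. Both routes yield the same bound with the same probability budget, and your explanation of why the bias term must be paid in $\|\zb\|_\infty$ rather than $\|\zb\|_2$ is spot on. One minor point: with the stated numerical constants the lower bound on $d_k$ comes out slightly below $1/2$ if you track Laurent--Massart literally, but any fixed constant in $(0,1)$ suffices and this only affects $C_1,C_2$.
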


\begin{proof}[Proof of Lemma \ref{lemma5wain:mod}] This Lemma is a generalization/modification of Lemma 5 of \cite{wainwright2009sharp}, allowing us to make usage of the $L_2$ norm $\|\zb\|_2$ to obtain more precise bounds. For self-content we spell out the full details of the proof. Using the spectral theorem, decompose the matrix $(n^{-1}\Ab\T \Ab)^{-1} - \eye = \Vb \Db \Vb\T$, where $\Db$ is a diagonal matrix and $\Vb$ is an independent of $\Db$ unitary matrix. Define the random variables:
$$
U_i = \eb_i\T \Vb \Db \Vb\T \zb = z_i \vb_i\T\Db \vb_i + \vb_i\T \Db \sum_{j \neq i} z_j \vb_j,
$$
where $\vb_i\T$ is the $i$\textsuperscript{th} row vector of the matrix $\Vb$. To bound $\max_i |U_i|$ we deal with these two terms in turn. First notice that $\vb_i\T\Db \vb_i$ is the $i$\textsuperscript{th} diagonal entry of the matrix $(n^{-1}\Ab\T \Ab)^{-1} - \eye$. By the assumption $(\Ab\T \Ab)^{-1} \sim \cW^{-1}(\eye_{s \times s}, n)$ where $\cW^{-1}$ is an inverse Wishart distribution. By the properties of the inverse Wishart distribution we conclude that $\vb_i\T\Db \vb_i \sim n \chi^{-2}(n - s + 1) - 1$, where $\chi^{-2}$ is the inverse $\chi^2$ distribution. Hence using Lemma 1 of \cite{laurent2000adaptive} and the union bound we have:
$$
\PP(\max_{i} |1 - ((\vb_i\T\Db \vb_i + 1) (n - s +1)/n)^{-1}| \geq 2 \sqrt{y} + 2y) \leq 2 s\exp(-(n - s +1)y).
$$
Selecting $y = \frac{2\log p}{n - s + 1}$ bounds the above probability by $2s/p^2 \leq 2/p$. For values of $y < 1/32$ we can lump $2\sqrt{y} + 2y < \Big(2 + \frac{\sqrt{2}}{4}\Big)\sqrt{y}$. Thus inverting the inequality inside the probability we conclude that for each $i \in [s]$:
$$
 \frac{n}{(n-s+1)(1 +  \Big(2 + \frac{\sqrt{2}}{4}\Big)\sqrt{\frac{2\log p}{n - s + 1}})}  \leq \vb_i\T\Db \vb_i + 1\leq \frac{n}{(n-s + 1)(1 -  \Big(2 + \frac{\sqrt{2}}{4}\Big)\sqrt{\frac{2\log p}{n - s + 1}})} 
 $$
It is simple to see that when $\frac{\log (p)}{n - s + 1} \leq \frac{1}{32}$ the above implies:
\begin{align}\label{vDv:bound}
\max_{i}  |z_i \vb_i\T\Db \vb_i| <  \|\zb\|_{\infty} \tilde c_1 \frac{s}{n-s} + \|\zb\|_2\tilde c_2 \sqrt{\frac{\log p}{n - s + 1}},
\end{align}
where $\tilde c_1 = (1 - (1/2 + \sqrt{2}/16))^{-1}$ and $\tilde c_2 = (2 + \sqrt{2}/4)\tilde c_1$. Next we show that the function $F(\vb_i) = \vb_i\T \Db \sum_{j \neq i} z_j \vb_j$ is Lipschitz with a constant $8\sqrt{s/n} \|\zb\|_2$. We have:
$$
\|\nabla F\|_2 \leq \|\Db\|_{2,2} \Big\|\sum_{j \neq i} z_j \vb_j\Big\|_{2} \leq 9 \sqrt{\frac{s}{n}}\|\zb\|_2,
$$
with the last inequality holding with probability at least $1 - 2 \exp(-s/2)$ when $\frac{s}{n} \leq \frac{1}{64}$. We used that $\vb_j$ are orthonormal, and we bounded the maximum eigenvalue of $\Db$, which follows just as in the proof of Lemma \ref{sphereconclemma} so we omit the details. Since the variables $\{\vb_j\}_{j \neq i}$ are uniformly distributed on a $(s-1)$-dimensional sphere, the proof is completed by invoking the concentration of Lipschitz functions on the sphere to bound $\max_{i \in [s]} |F(\vb_i)|$:
$$
\p(\max_{i \in [s]} |F(\vb_i)| \geq t) \leq 2s\exp\left(- \tilde c (s-1) \frac{t^2}{81 \frac{s}{n}\|\zb\|_2^2}\right),
$$
for an absolute constant $\tilde c$. Under the assumption $s \geq 2$, we can select $t = 18 \|\zb\|_2 \sqrt{\frac{\log p}{\tilde c n}}$ and taking into account that $p > s$ completes the proof, after noticing that we can absorb the second term of (\ref{vDv:bound}) to the above expression. 
\end{proof}

\section{Preliminary Results}\label{prelim:res:appendix}

\begin{proof}[Proof of Lemma \ref{proplemma}] First let $\bSigma = \mathbb{I}_{p\times p}$ (hence by assumption $\|\bbeta_0\|_2 = 1$) and take any $\bb \perp \bbeta_0$. Note that by the linearity of expectation:
$$
\e[\bX\T\bbeta_0\bX\T \bb | \bX\T \bbeta_0] = c_{\bb} (\bX\T \bbeta_0)^2.
$$
Taking another expectation above we have $\e[\bX\T\bbeta_0\bX\T \bb ] = c_{\bb} \e[(\bX\T \bbeta_0)^2]$. However
$$
\e[\bX\T\bbeta_0\bX\T \bb ] =  \bb\T\bbeta_0 = 0,
$$
and hence $c_{\bb} = 0$. Thus if $\bb \perp \bbeta_0$, $\e[\bX\T \bb | \bX\T \bbeta_0] = 0$. Next, for any $\bb \perp \bbeta_0$ we have:
$$
\e[Y\bX\T \bb] = \e[ \e[Y\bX\T \bb | \bX\T \bbeta_0] ] = \e[ \e[Y | \bX\T \bbeta_0] \e[\bX\T \bb | \bX\T \bbeta_0]] = 0. 
$$
Hence $\e[Y\bX] \varpropto \bbeta_0$. Finally, a projection on $\bbeta_0$ yields
$$
c_0 \|\bbeta_0\|_2^2 = \e[Y\bX\T\bbeta_0].
$$ 
This completes the proof in the case when $\bSigma = \mathbb{I}_{p\times p}$. For the more general case observe that $Y = f(\bX\T\bbeta_0, \varepsilon) = f(\bX\T\bSigma^{-1/2} \bSigma^{1/2}\bbeta_0, \varepsilon)$, and thus by what we just saw we have:
$$
\EE[Y \bSigma^{-1/2}\bX] = c_0 \bSigma^{1/2} \bbeta_0,
$$
which becomes what we wanted to show after multiplying by $\bSigma^{-1/2}$ on the left.
\end{proof}

\section{Lower Bound}\label{lower:bound:appendix}

For two probability measures $P$ and $Q$, which are absolutely continuous with respect to a third probability measure $\mu$ (i.e. $P, Q \ll \mu$), define their KL divergence by $\DKL(P \| Q) = \int p \log \frac{p}{q} d\mu$, where $p = \frac{dP}{d\mu}$, $q = \frac{dQ}{d\mu}$.

\begin{lemma}\label{lowerboundthm:new} Assume conditions required in Proposition \ref{lower:bound:prop:new}. In addition, let for any fixed $u, v \in \mathbb{R}$ and some positive constant $\Xi$, $f(u,\varepsilon)$ and $f(v,\varepsilon)$ satisfy
\begin{align}
\DKL(p(f(u, \varepsilon)) \| p(f(v, \varepsilon))) \leq \exp(\Xi(u - v)^2) - 1, \label{KLdivcond} 
\end{align}
Then if
$$
n_{p,s} < \frac{1}{8 \Xi}, \quad \mbox{and} \quad s \geq 8 \Xi,
$$
any algorithm recovering the support of $\bbeta_0$ under (\ref{SIM}) will have errors with probability at least $\frac{1}{2}$ asymptotically.
\end{lemma}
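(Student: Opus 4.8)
The plan is to prove this impossibility result by reducing support recovery to a multiple hypothesis testing problem and invoking Fano's inequality. First I would build a finite sub-family of the restricted parameter space indexed by the location of a single active coordinate: anchor a common support $T$ with $|T| = s-1$ inside the controlled block $S$, with prescribed signed magnitudes of order $1/\sqrt s$, and for each $j \in S^c$ form $\bbeta^{(j)}$ with support $T \cup \{j\}$ (so $\|\bbeta^{(j)}\|_0 = s$). This yields $M = p - s + 1 \asymp p - s$ distinct supports, and any support-recovery algorithm that succeeds must in particular identify the index $j$, so its error probability is lower bounded by the testing error among $\{\bbeta^{(j)}\}$. The structural assumptions inherited from Proposition \ref{lower:bound:prop:new} (namely $\|\bSigma_{S,S}\|_{\infty,\infty} < R$, the bounded spectrum on $S$, $d \le \diag(\bSigma_{S^c,S^c}) \le D$, and the signal-ratio floor $c_\bSigma$) are exactly what I would use to choose the magnitudes so that each $\bbeta^{(j)}$ genuinely satisfies $\bbeta^{(j)\intercal}\bSigma\bbeta^{(j)} = 1$ and $(\bbeta^{(j)})^{\min}/\|\bbeta^{(j)}\|_\infty \ge c_\bSigma$.

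By the KL form of Fano's inequality, with a uniform prior over the $M$ hypotheses, the minimax testing error is at least $1 - (\bar D_n + \log 2)/\log M$, where $\bar D_n$ bounds the maximal pairwise Kullback--Leibler divergence between the $n$-fold product laws $P^{\otimes n}_{\bbeta^{(j)}}$. Because $\bX \sim \cN(0,\bSigma)$ has the same marginal under every hypothesis, the per-observation divergence factorizes as $\DKL(P_{\bbeta} \| P_{\bbeta'}) = \EE_{\bX}\, \DKL\big(p(f(u,\varepsilon)) \,\|\, p(f(v,\varepsilon))\big)$ with $u = \bX\T\bbeta$, $v = \bX\T\bbeta'$. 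Applying the hypothesized bound (\ref{KLdivcond}) and noting that $u - v = \bX\T(\bbeta - \bbeta') \sim \cN(0,\tau^2)$ with $\tau^2 := (\bbeta-\bbeta')\T\bSigma(\bbeta-\bbeta')$, the Gaussian $\chi^2$ moment generating function gives
$$
\DKL(P_{\bbeta} \| P_{\bbeta'}) \le \EE\big[\exp(\Xi \tau^2 Z^2)\big] - 1 = (1 - 2\Xi\tau^2)^{-1/2} - 1, \qquad Z \sim \cN(0,1),
$$
valid whenever $\Xi\tau^2 < 1/2$.

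The key quantitative step is to control $\tau^2$. Since $\bbeta^{(j)} - \bbeta^{(k)}$ is supported on few coordinates, each of magnitude $\asymp 1/\sqrt s$ (exactly the two coordinates $\{j,k\}$ in the orthogonal case), I would bound $\tau^2 \le 2/s$ using $\diag(\bSigma_{S^c,S^c}) \le D$ together with Cauchy--Schwarz on the off-diagonal entries, with $c_\bSigma$ taken small enough to absorb the correlation terms; in the orthogonal case $\tau^2 = 2/s$ exactly. The hypothesis $s \ge 8\Xi$ then guarantees $2\Xi\tau^2 \le 4\Xi/s \le 1/2$, which both validates the MGF computation and justifies the elementary estimate $(1-x)^{-1/2} - 1 \le x$ on $[0,4/3]$, yielding a per-observation divergence at most $2\Xi\tau^2 \le 4\Xi/s$. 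Tensorizing over the $n$ i.i.d. samples gives $\bar D_n \le 4\Xi n/s$, so that
$$
\frac{\bar D_n}{\log M} \le \frac{4\Xi n}{s\log(p-s)} = 4\Xi\, n_{p,s} < 4\Xi \cdot \frac{1}{8\Xi} = \frac12
$$
under the assumed $n_{p,s} < 1/(8\Xi)$. Since $M \to \infty$, the residual term $\log 2/\log M \to 0$, and Fano's inequality forces the error probability to exceed $1/2$ asymptotically.

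I expect the main obstacle to be the construction step for a general correlated $\bSigma$: I must place the sub-family inside the constrained set $\{\bbeta\T\bSigma\bbeta = 1,\ \|\bbeta\|_0 = s,\ \bbeta^{\min}/\|\bbeta\|_\infty \ge c_\bSigma\}$ while simultaneously keeping the $\bSigma$-weighted separation $\tau^2 \le 2/s$ between any two hypotheses. Moving an active coordinate into $S^c$ perturbs the normalization through the cross-covariances $\bSigma_{T,j}$, so the magnitude of the varying coordinate must be chosen jointly with the anchored values on $T$; here the bounds $\|\bSigma_{S,S}\|_{\infty,\infty} < R$ and $\diag(\bSigma_{S^c,S^c}) \le D$ control $|\Sigma_{Tj}| \le \sqrt{RD}$ via Cauchy--Schwarz and keep each magnitude $\asymp 1/\sqrt s$, which is what makes the feasibility and the $\tau^2$ bound hold simultaneously. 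Carrying the constants through cleanly so that the separation is at most $2/s$ and the threshold comes out to exactly $1/(8\Xi)$ is the delicate part of the argument.
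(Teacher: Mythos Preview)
Your overall strategy matches the paper's: build $p-s$ hypotheses that share an $(s-1)$-element anchor in $S$ and differ by a single coordinate in $S^c$, bound the pairwise KL via the chi-square MGF using assumption (\ref{KLdivcond}), and finish with Fano. The mutual-information reduction and the MGF computation are essentially identical to what the authors do.

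There is, however, a concrete gap in your construction step. To place each $\bbeta^{(j)}$ on the normalization sphere $\bbeta\T\bSigma\bbeta=1$ you must control the cross term $\bSigma_{\{j\},T}\bgamma$, and entry-wise Cauchy--Schwarz only gives $|\Sigma_{tj}|\le\sqrt{RD}$, hence $|\bSigma_{\{j\},T}\bgamma|\lesssim (s-1)\sqrt{RD}/\sqrt{s}\asymp\sqrt{s}$. With a cross term of that size the quadratic in $b_j$ cannot be solved with $b_j\asymp 1$, and the magnitude constraint $\bbeta^{\min}/\|\bbeta\|_\infty\ge c_\bSigma$ fails. The paper instead exploits the irrepresentable condition (Assumption~\ref{irrepcond}, which is among the hypotheses inherited from Proposition~\ref{lower:bound:prop:new}): writing $\bSigma_{S^c,S}\bgamma=\bSigma_{S^c,S}\bSigma_{S,S}^{-1}(\bSigma_{S,S}\bgamma)$ and using $\|\bSigma_{S,S}\bgamma\|_\infty\le aR/\sqrt{s}$ together with $\|\bSigma_{S^c,S}\bSigma_{S,S}^{-1}\|_{\infty,\infty}\le 1-\kappa$ yields $\|\bSigma_{S^c,S}\bgamma\|_\infty\le(1-\kappa)aR/\sqrt{s}$. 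This $O(1/\sqrt{s})$ bound is exactly what makes $b_j$ exist in a bounded interval and produces the explicit choice of $c_\bSigma$ in (\ref{c_sigma:def}). You should replace the Cauchy--Schwarz argument by this step.

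Two smaller points. First, for a general $\bSigma$ the separation obeys $\tau^2\le 4/s$, not $2/s$: the cross-correlation $\Sigma_{jk}$ between the two moving coordinates in $S^c$ can have either sign, so the worst case doubles the orthogonal value. This is what the paper obtains, and it changes your per-observation KL bound from $4\Xi/s$ to $8\Xi/s$; the final threshold then tracks the paper's constants rather than the sharper one you wrote. Second, the elementary estimate $(1-x)^{-1/2}-1\le x$ holds on $[0,(\sqrt5-1)/2]\approx[0,0.618]$, not on $[0,4/3]$; this is harmless since you only use it for $x\le 1/2$.
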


\begin{proof}[Proof of Lemma \ref{lowerboundthm:new}] We start by constructing a set of $p - s$ vectors $\bB = \{\bbeta_1, \ldots, \bbeta_{p-s}\}$, belonging to the parameter space $\{\bbeta \in \RR^p: \bbeta\T\bSigma\bbeta = 1,  \|\bbeta\|_0 = s, \frac{\bbeta^{\min}}{\|\bbeta\|_{\infty}} \geq c_{\bSigma}\},$  for a sufficiently small (to be chosen) $c_{\bSigma} > 0$, such that $\var(\bX\T(\bbeta_k - \bbeta_j)) \leq \frac{4}{s}$ for all $k,j \in [p-s]$. Once this set is constructed we will use standard Fano type of argument to finish the proof. 

Without loss of generality let us assume that $S_0 = [s]$. To construct the set $\bB$, first focus on the sub-matrix $\bSigma_{S_0,S_0}$. We take the $s$ dimensional vector $\bgamma = a(1/\sqrt{s}, \ldots, 1/\sqrt{s}, 0)\T$ where $a > 0$ is selected so that $\bgamma\T\bSigma_{S_0,S_0}\bgamma = \frac{s-1}{s}$. Since $\bSigma_{S_0,S_0}$ is assumed to have bounded eigenvalues we know that such $a$ indeed exists, and can be chosen in the interval $a \in [\frac{1}{2}\frac{1}{\sqrt{\lambda_{\max}}}, \frac{1}{\sqrt{\lambda_{\min}}}]$ for $s \geq 2$. Next to construct $\bbeta_k$ we use:
$$\beta_{kr} = \gamma_r\mathds{1}(r \in S_0) + b_k\mathds{1}(r = k + s)/\sqrt{s},$$
where $b_k$ is chosen so that $\bbeta_k\T \bSigma \bbeta_k = 1$. Below we argue that such $b_k$ indeed exists. By H\"{o}lder's inequality we have $\|\bSigma_{S_0,S_0} \bgamma\|_{\infty} \leq a\|\bSigma_{S_0,S_0}\|_{\infty, \infty} /\sqrt{s} \leq aR/\sqrt{s}$. Hence using Assumption \ref{irrepcond} we have:
$$
\|\bSigma_{S_0^c,S_0}\bSigma^{-1}_{S_0,S_0}\bSigma_{S_0,S_0} \bgamma\|_{\infty} \leq \|\bSigma_{S_0^c,S_0}\bSigma^{-1}_{S_0,S_0}\|_{\infty,\infty} aR/\sqrt{s} \leq (1 - \kappa)aR/\sqrt{s}.
$$
Note that due to the last inequality, for any $k \in [p-s]$ we have:
$$
\bbeta_k\T \bSigma \bbeta_k = \bgamma\T \bSigma_{S_0,S_0}\bgamma + 2\bSigma_{\{k + s\},S_0}\bgamma\frac{b_k}{\sqrt{s}} + \frac{b_k^2\Sigma_{k + s,k + s}}{s} \leq \frac{s-1}{s} + 2\frac{(1 - \kappa)a|b_k|R}{s} + \frac{b_k^2 \Sigma_{k + s,k + s}}{s},
$$
where we remind the reader that $k + s \in S_0^c$. We chose $b_k$ such that $\sign(b_k) = \sign(\bSigma_{\{k + s\},S_0}\bgamma)$. Hence we also have:
$$
\frac{s-1}{s} + \frac{b_k^2\Sigma_{k + s, k + s}}{s}  \leq \bbeta_k\T \bSigma \bbeta_k.
$$
Combining the last two inequalities we conclude that there exists:
$$|b_k| \in \Big[\frac{\sqrt{(1 - \kappa)^2 a^2 R^2 + \Sigma_{k + s,k + s}} - (1 - \kappa)a R}{\Sigma_{k + s,k + s}} ,\Sigma^{-1/2}_{k + s,k + s}\Big],$$
with the desired properties. One can easily check that when:
\begin{align}\label{c_sigma:def}
c_{\bSigma} \leq \min\Bigg(\frac{1}{2} \sqrt{\frac{d}{\lambda_{\max}}}, \frac{\sqrt{(1 - \kappa)^2R^2 + d\lambda_{\min}} - (1-\kappa)R}{D}\Bigg),
\end{align}
we have that $\min_{j \in [p-s]}\frac{\bbeta_j^{\min}}{\|\bbeta_j\|_{\infty}} \geq c_{\bSigma}$, and in addition as we promised we have:
$$
\var(\bX\T(\bbeta_k - \bbeta_j)) = \frac{\Sigma_{k + s,k + s} b_k^2 - 2 \Sigma_{k + s, j + s}b_kb_j + \Sigma_{j + s,j + s} b_j^2}{s} \leq \frac{4}{s}.
$$

Next, let $\Jsc$ be a uniform distribution on $\bB$. Under the $0-1$ loss the risk equals the probability of error:
\begin{align}\frac{1}{p - s} \sum_{j \in [p-s]}P_{\smbbeta_j} \{\widehat{S}\neq S({\bbeta_j})\}, \label{errorsum}
\end{align}
where by $P_{\smbbeta_j}$ we are measuring the probability under a dataset generated with $\bbeta_j$, and $\hat S$ is an estimate of the true support produced by any (possibly randomized) algorithm. By Fano's inequality that:
\begin{align}\label{fanosineq}
\p(\mbox{error}) \geq 1 - \frac{\cI(\Jsc; \Dsc) + \log(2)}{\log|{\bB}|},
\end{align}
where $\cI(\Jsc; \Dsc)$ is the mutual information between the sample $\Jsc$ and the sample $\Dsc$. Note now that for the mutual information we have 
\begin{align*}
\cI(\Jsc; \Dsc) & = \cI(\Jsc; [\{f(\bX_j\T\bbeta_0, \varepsilon_j) ,\bX_j\},j=1,\ldots,n])  \\
& \leq n \cH[\{f(\bX\T\bbeta_0, \varepsilon),\bX\}] - n \cH[\{f(\bX\T\bbeta_0, \varepsilon),\bX\} | \Jsc] \\
& \leq n \max_{k, j \in [p-s]}\DKL\big( p[\{f(\bX\T\bbeta_{k}, \varepsilon), \bX\}] \| p[\{f(\bX\T\bbeta_{j}, \varepsilon), \bX\}]\big),
\end{align*}
where $\cH(\cdot)$ denotes the marginal entropy, $\cH(\cdot \mid \cdot)$ denotes the conditional entropy, $\DKL$ denotes the KL divergence,
the first  inequality follows from the chain inequality of entropy and the second inequality follows from a standard bound. 
Since the KL divergence is invariant under change of variables, we let $U_k = \bX\T \bbeta_k$ and $\bW_{kj} = \Pb_{\bSigma, \{\bbeta_k, \bbeta_{j}\}^\perp}\bX$, where $\Pb_{\bSigma, \{\bbeta_k, \bbeta_{j}\}^\perp} \in \RR^{(p - 2) \times p}$ is  chosen such that $\Pb_{\bSigma, \{\bbeta_k, \bbeta_{j}\}^\perp}\bSigma \bbeta_k =0$ and $\Pb_{\bSigma, \{\bbeta_k, \bbeta_{j}\}^\perp}\bSigma \bbeta_j =0$. Noting that $\bW_{kj}$ is independent of $U_k, U_j, \varepsilon$, $U_k - U_j \sim \cN(0, V)$ with $V \le 4/s$, and applying assumption (\ref{KLdivcond}), we have  
\begin{align*}
n^{-1} \cI(\Jsc; \Dsc) & \leq \max_{k, j \in [p-s]}\DKL \big( p[\{f(U_k, \varepsilon), U_k, U_j\}]  \| p[ \{f(U_j, \varepsilon), U_k, U_j\}] \big) \\
& \leq \max_{k, j \in [p-s]} \e \exp(\Xi(U_k - U_j)^2) - 1  \leq \sqrt{\frac{s}{ s - 8 \Xi}} - 1 \leq \frac{8\Xi}{s}.
\end{align*}
where the second inequality can be obtained by conditioning on $U_k, U_j$, and we assume that the value of $s$ is large enough so that $s\geq 16\Xi$. We conclude that:
$$
\cI(\Jsc; \Dsc) \leq \frac{8\Xi n}{s}.
$$
Consequently by (\ref{fanosineq}) if $n_{p,s} < 1/(8\Xi)$ we will have errors with probability at least $\frac{1}{2}$, asymptotically. This is what we wanted to show. 
%
%
\end{proof}

\begin{proof}[Proof of Proposition \ref{lower:bound:prop:new}]  Note that all moments of the random variable $\varepsilon$ exist. Next we verify that condition (\ref{KLdivcond}) of Lemma \ref{lowerboundthm:new} holds in this setup. Since $G$ is 1-1 and KL divergence is invariant under changes of variables WLOG we can assume our model is simply $Y = h(\bX\T \bbeta_0) + \varepsilon$ or in other words $f(u, \varepsilon) = h(u) + \varepsilon$. This is a location family for $u \in \RR$ and thus the normalizing constant of the densities will stay the same regardless of the value of $u$. Direct calculation yields:
\begin{align*}
\DKL[p\{f(u, \varepsilon)\} \| p\{f(v, \varepsilon)\}] & = \e [P((\xi + h(u) - h(v))^2) - P(\xi^2)] = \widetilde{P}((h(u) - h(v))^2),
\end{align*}
where $\xi$ has a density $p_{\xi}(x) \varpropto \exp(-P(x^2))$, and $\widetilde{P}$ is another non-zero polynomial with nonnegative coefficients, with $\widetilde{P}(0) = 0$ of the same degree as $P$. The last observation follows from the fact that all odd moments of $\xi$ are $0$, since $\xi$ is a symmetric about $0$ distribution. Since $h$ is $L$-Lipschitz we conclude that:
\begin{align*}
\DKL[p\{f(u, \varepsilon)\}\| p\{f(v, \varepsilon)\}] & \leq \widetilde{P}(L^2(u - v)^2).
\end{align*}
The last can be clearly dominated by $\exp(C(u-v)^2) - 1$ for a large enough constant $C$.
\end{proof}

\section{Covariance Thresholding}\label{cov:tresh:app}

\begin{proof}[Proof of Proposition \ref{covthresh}]
Using the fact that for any two random variables $R,T$ we have $\|RT\|_{\psi_1} \leq 2 \|R\|_{\psi_2} \|T\|_{\psi_2}$ we can conclude that the random vector $Y \bX$ is coordinate-wise sub-exponentially distributed since $\sup_{j \in [p]}\|Y X_j\|_{\psi_1} \leq \cK := 2 K_Y K$. An application of Proposition 5.16 of \cite{vershynin2010introduction} and the union bound then gives us that:
$$
\p\left(\left\|\frac{1}{n} \sum_{i = 1}^n Y_i \bX_i - \e[Y\bX]\right\|_{\infty} \geq t\right) \leq 2p \exp\left[- \tilde{c} \min \left(\frac{n t^2}{\cK^2}, \frac{n t}{\cK} \right)\right],
$$
where $\tilde{c} > 0$ is some absolute constant. This inequality then implies: 
$$\sup_{j \in [p]}\left|\frac{1}{n} \sum_{i = 1}^n Y_i X_{ij} - \e(YX_j)\right|\leq \cK\sqrt{ \frac{2 \log p}{\tilde{c} n}},$$ 
with probability at least $1 - 2p^{-1}$ for values of $n,p$ such that $\frac{\log p}{n} \leq \frac{\tilde{c}}{2}$. Note that the inequality in the preceding display implies that if:
$$
\frac{|c_0|}{\sqrt{s}} > R\cK \sqrt{ \frac{2 \log p}{\tilde{c} n}},
$$
for any $R > 2$ there will be a gap in the absolute values of the coefficients of $U_j = |n^{-1} \sum_{i = 1}^n Y_i X_{ij} |$ for $j \in S_{0}$ and $j \not\in S_{0}$. The latter happens because:
$$
\frac{|c_0|}{\sqrt{s}} - \cK\sqrt{ \frac{2 \log p}{\tilde{c} n}} \geq (R - 1)\cK \sqrt{ \frac{2 \log p}{\tilde{c} n}} >  \cK\sqrt{ \frac{2 \log p}{\tilde{c} n}}.
$$
This also shows that the coefficients will achieve the correct sign. Thus, as long as $\frac{n}{s \log p} \geq \Upsilon$, for $\Upsilon = \frac{2 R^2 \cK^2}{c_0^2 \tilde c^2}$ signed support recovery happens with asymptotic probability $1$. Under our assumption the latter is implied by $n_{p,s} > \Upsilon/\iota$ which completes the proof. 
\end{proof}

\begin{proof}[Proof of Proposition \ref{propnormal}] We follow the same steps as the proof of Proposition \ref{covthresh}. We will use the following Lemma which we justify after the proof:

\begin{lemma} \label{subgaussboundnormal} Let us observe $n$ data points from the model described in Proposition \ref{propnormal} with $\bbeta$ being an arbitrary unit vector. Then with probability at least $1 - \frac{\eta}{n \sigma^4} - \frac{\gamma}{\log p} - \frac{2}{p}$ the following event holds:
$$
\left\|\frac{1}{n} \sum_{i = 1}^n Y_i \bX_i - \EE(Y \bX) \right\|_{\infty} \leq \Big(\|\bbeta_0\|_{\infty} +  2\sqrt{2\sigma^2}\Big) \sqrt{\frac{\log p}{n}}.
$$
\end{lemma}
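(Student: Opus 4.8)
The plan is to control the maximal coordinate deviation after splitting each $X_j$ into its component along $\bbeta_0$ and an orthogonal Gaussian remainder. Write $U := \bX\T\bbeta_0 \sim \cN(0,1)$ (using $\|\bbeta_0\|_2 = 1$), and for each $j \in [p]$ set $R_j := X_j - \beta_{0j} U$. Since $(R_j, U)$ is jointly Gaussian with $\cov(R_j, U) = \cov(X_j,U) - \beta_{0j}\var(U) = \beta_{0j} - \beta_{0j}\cdot 1 = 0$, we get $R_j \perp U$; combined with $\varepsilon \independent \bX$ this makes $R_j$ mutually independent of $(U,\varepsilon)$, hence independent of $Y = f(U,\varepsilon)$. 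This yields the exact decomposition
\[
\frac1n\sum_{i=1}^n Y_i X_{ij} - \EE(Y X_j) = \beta_{0j}\Big(\frac1n\sum_{i=1}^n Y_i U_i - c_0\Big) + \frac1n\sum_{i=1}^n Y_i R_{ij},
\]
where I have used $\EE(YX_j)=\beta_{0j}c_0$, itself a consequence of Lemma \ref{proplemma} together with $\EE R_j = 0$ and the independence of $R_j$ and $Y$.

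For the first term, whose $j$-dependence is only through the scalar $\beta_{0j}$, I would apply Chebyshev's inequality to the scalar average $\frac1n\sum_i Y_iU_i$, whose variance is $\gamma/n$ with $\gamma = \var(Y\bX\T\bbeta_0)$. Taking the threshold $\sqrt{\log p / n}$ gives a failure probability of $\gamma/\log p$ and a contribution bounded by $\|\bbeta_0\|_\infty \sqrt{\log p/n}$, which produces the $\gamma/\log p$ term and the $\|\bbeta_0\|_\infty$ part of the stated bound.

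The main work is on the remainder $T_j := \frac1n\sum_i Y_i R_{ij}$. Conditioning on $Y_1,\dots,Y_n$ (equivalently on $\{(U_i,\varepsilon_i)\}$), independence makes each $T_j$ conditionally Gaussian with mean $0$ and variance $\tfrac{1-\beta_{0j}^2}{n}\hat{\sigma}^2 \le \hat{\sigma}^2/n$, where $\hat{\sigma}^2 := \frac1n\sum_i Y_i^2$ and the conditional covariance of the row $(R_{i1},\dots,R_{ip})$ is $\Ibb - \bbeta_0\bbeta_0\T$. A standard Gaussian tail bound and a union bound over $j\in[p]$ then give $\max_j|T_j| \le t$ off a set of conditional probability $2p\exp\big(-nt^2/(2\hat{\sigma}^2)\big)$. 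To replace the random $\hat{\sigma}^2$ by the deterministic $2\sigma^2$, I would separately show $\hat{\sigma}^2 \le 2\sigma^2$ with probability at least $1 - \eta/(n\sigma^4)$ by Chebyshev, since $\var(\hat{\sigma}^2) = \eta/n$; this produces the $\eta/(n\sigma^4)$ term. On this event, choosing $t = 2\sqrt{2\sigma^2}\sqrt{\log p/n}$ makes the exponent equal $2\log p$, so the conditional tail is $2/p$, matching the last two terms. Combining the three pieces via a union bound over the failure events, together with the triangle inequality applied to the displayed decomposition, yields the claimed inequality with the stated probability.

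The only conceptually delicate step is the decomposition that decouples $R_j$ from $Y$: for coordinates $j \in S_0$ the raw product $Y X_j$ is genuinely dependent, and it is precisely the Gaussianity of $\bX$ (where uncorrelated implies independent) that lets one peel off the single scalar direction $U$ carrying all the dependence, leaving an independent Gaussian remainder whose coordinate-wise maximum concentrates at the optimal $\sqrt{\log p/n}$ rate. Everything after that is routine second-moment and Gaussian maximal-inequality bookkeeping.
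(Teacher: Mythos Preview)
Your proposal is correct and follows essentially the same route as the paper: the coordinate-wise split $X_j = \beta_{0j}U + R_j$ is exactly the paper's vector decomposition $\bX = \bbeta_0\bbeta_0\T\bX + (\Ibb - \bbeta_0\bbeta_0\T)\bX$ written one coordinate at a time, and the subsequent Chebyshev bound on the scalar part, conditional-Gaussian union bound on the orthogonal part, and Chebyshev control of $\hat\sigma^2$ all match the paper's argument with the same constants and failure probabilities.
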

Using the fact that in our case $\EE(Y \bX) = c_0 \bbeta_0$, and that $\|\bbeta_0\|_{\infty} = \frac{1}{\sqrt{s}}$, we have that if:
$$
\Big(\|\bbeta_0\|_{\infty} +  2\sqrt{2\sigma^2}\Big) \sqrt{\frac{\log p}{n}} \leq (1 + 2\sqrt{2}\sigma)\frac{1}{\sqrt{s}}\sqrt{\frac{s\log p}{n}} < \frac{|c_0|}{2}\frac{1}{\sqrt{s}}
$$
there will be a gap between the coefficients corresponding to $j \in S_{0} := S(\bbeta_0)$ and $j \not \in S_{0}$. Note that the last inequality holds if $\frac{s \log p}{n} < \frac{c_0^2}{4 (1 + 2 \sqrt{2}\sigma)^2}$.
\end{proof}

\begin{remark} The slow convergence in probability rate $(\log p)^{-1}$ observed in Lemma \ref{subgaussboundnormal} is due to the fact that we are not requiring that $Y$ is sub-Gaussian. If we do require it, the convergence rate of the probability can be seen to reduce to the usual $p^{-1}$ level.
\end{remark}

\begin{proof}[Proof of Lemma \ref{subgaussboundnormal}]
Note that, sub-exponential concentration bounds do not apply in this case. However, observe that by the properties of the multivariate normal distribution the random variable $(\Ibb - \bbeta_0\bbeta_0\T)\bX$ is independent of $\bX\T \bbeta_0$ and hence is independent of $Y$. Furthermore it is clear that the random variable $Y(\Ibb - \bbeta_0\bbeta_0\T)\bX$ has mean $0$. Note that conditional on $Y_{i}, i \in [n]$ we have that $\frac{1}{n} \sum_{i = 1}^n Y_i (\Ibb - \bbeta_0\bbeta_0\T)\bX_{i} \sim \cN(0, n^{-2}\sum_{i = 1}^n Y^2_i (\Ibb - \bbeta_0\bbeta_0\T))$. Thus by a standard Gaussian tail bound:
$$
\p\left(\left\|\frac{1}{n} \sum_{i = 1}^n Y_i (\Ibb - \bbeta_0\bbeta_0\T)\bX_i\right\|_{\infty} \geq t \bigg \vert \bY \right) \leq 2p \exp\left[- \frac{n t^2}{2 \overline{Y^2}}\right],
$$
where $\overline{Y^2} = n^{-1}\sum_{i = 1}^n Y_i^2$, and we used that $\|\eye - \bbeta_0\bbeta_0\T\|_{2,2} \leq 1$. By Chebyshev's inequality $\p(|\overline{Y^2} - \sigma^2| \geq r ) \leq \frac{\eta}{n r^2}$. Hence selecting $r = \sigma^2$ will keep the above probability going to 1 at rate $\frac{1}{n}$ and moreover for large $n$ we have $\overline{Y^2} \leq 2\sigma^2$. Using this bound in the tail bound above yields that for a choice of $t = 2\sqrt{2\sigma^2\frac{\log p}{n}}$ the tail bound will go to 0 at rate $2p^{-1}$ as claimed.

Next consider controlling:
$$
\p\left(\left\|\frac{1}{n} \sum_{i = 1}^n Y_i \bbeta_0\bbeta_0\T\bX_i - c_0\bbeta_0 \right\|_{\infty} \geq t \right) = \p\left(\left|\frac{1}{n} \sum_{i = 1}^n Y_i \bX_i\T\bbeta_0 - c_0 \right|\geq t/\|\bbeta_0\|_{\infty} \right) ,
$$
where recall that $\EE(Y\bX) = c_0\bbeta_0$, and $c_0$ is defined in the main text. Applying Chebyshev's inequality once again we get that $t = \|\bbeta_0\|_{\infty}\sqrt{\frac{\log p}{n}}$ suffices to keep the above probability going to $0$. By the triangle inequality we conclude that, with probability going to 1:
$$
\left\|\frac{1}{n} \sum_{i = 1}^n Y_i \bX_i - \EE(Y \bX)\right\|_{\infty} \leq  \|\bbeta_0\|_{\infty}\sqrt{\frac{\log p}{n}} +  2\sqrt{2\sigma^2 \frac{\log p}{n}}.
$$
This is what we claimed.
\end{proof}

\section{LASSO Support Recovery}

\begin{proof}[Proof of Lemma \ref{newlemma4}] Note that since $\Pb_{\Xbb_{,S_0}^\perp}$ is an orthogonal projection matrix it contracts length and hence:
$$
\left\|\Pb_{\Xbb_{,S_0}^\perp} \left(\frac{\bfw}{\lambda n}\right)\right\|_2^2 \leq \frac{\|\bfw\|^2_2}{\lambda^2 n^2}.
$$
Next observe that $\bfw = \bY - c_0 \bfX\bbeta_0$ is a vector with non-zero mean. However, by Chebyshev's inequality we have:
$$
\p\left(\left|\frac{\|\bfw\|^2_2}{n} - \xi^2\right| \geq t \right) \leq \frac{\theta^2}{n t^2}.
$$
Then setting $t = 1$ brings the above probability to $0$ at a rate $\frac{\theta^2}{n}$. Next:
\begin{align*}
n^{-1} \check \zb_{S_0}\T (n^{-1} \Xbb_{,S_0}\T \Xbb_{,S_0})^{-1} \check \zb_{S_0} \leq \frac{1}{\lambda_{\min}(1 - 2\sqrt{\frac{s}{n}})^2} \frac{\|\check \zb_{S_0}\|_2^2}{n} \leq \frac{1}{\lambda_{\min}(1 - 2\sqrt{\frac{s}{n}})^2}\frac{s}{n} ,
\end{align*}
with probability at least $1 - 2\exp(-s/2)$, where we used Lemma \ref{cor535versh}. This completes the proof.
\end{proof}

\begin{proof}[Proof of Lemma \ref{superimportantlemma}]
First, we note the following decomposition:
$$
[\Xbb_{,S_0}\T \Xbb_{,S_0}]^{-1} \Xbb_{,S_0}\T \bY - c_0 \bbeta_{0S_0} = (n[\Xbb_{,S_0}\T \Xbb_{,S_0}]^{-1} - \eye) n^{-1}\Xbb_{,S_0}\T \bY+ (n^{-1}\Xbb_{,S_0}\T \bY - c_0 \bbeta_{0S_0}).
$$
Note that the second term is mean $0$. Applying Lemma \ref{subgaussboundnormal} gives us a bound on the second term. We next move on to consider the first term. 

Consider a ``symmetrization'' transformation of the predictor matrix $\widetilde \Xbb_{,S_0}\T = (\eye - \bbeta_{0S_0} \bbeta_{0S_0}\T) \Xbb_{,S_0}\T + \bbeta_{0S_0} \bbeta_{0S_0}\T \Xbb_{,S_0}^{*\intercal}$, where $[\Xbb_{,S_0}^*]_{n \times s}$ is an i.i.d. copy of $\Xbb_{,S_0}$, or in other words the columns of $\Xbb_{,S_0}^*$: $\bX_{j}^* \sim \cN(0, \eye_{n \times n}), j = 1,\ldots, s$ and are independent of $\Xbb_{,S_0}$. Note that in doing this construction, we guarantee that $\widetilde \Xbb_{,S_0}$ is independent of $\Xbb_{,S_0}\T \bbeta_{0S_0}$. Now we further decompose the first term as follows:
\begin{align*}
(n[\Xbb_{,S_0}\T \Xbb_{,S_0}]^{-1} - \eye) n^{-1}\Xbb_{,S_0}\T \bY & = \underbrace{(n[\Xbb_{,S_0}\T \Xbb_{,S_0}]^{-1} - \eye)\bbeta_{0S_0} \bbeta_{0S_0}\T n^{-1}\Xbb_{,S_0}\T \bY}_{\bI_1} \\
&+ \underbrace{n([\Xbb_{,S_0}\T \Xbb_{,S_0}]^{-1} - [\widetilde \Xbb_{,S_0}\T \widetilde \Xbb_{,S_0}]^{-1})(\eye - \bbeta_{0S_0} \bbeta_{0S_0}\T )n^{-1}\Xbb_{,S_0}\T \bY}_{\bI_2}\\
& + \underbrace{(n[\widetilde \Xbb_{,S_0}\T \widetilde \Xbb_{,S_0}]^{-1} - \eye)n^{-1} \widetilde \Xbb_{,S_0}\T \bY}_{\bI_3}\\ & - \underbrace{(n[\widetilde \Xbb_{,S_0}\T \widetilde \Xbb_{,S_0}]^{-1} - \eye)\bbeta_{0S_0} \bbeta_{0S_0}\T n^{-1} \Xbb_{,S_0}^{* \intercal} \bY}_{\bI_4}.
\end{align*}
We next deal with each of these terms separately. For the first and last terms we can directly apply Lemma \ref{lemma5wain:mod}. Under the same event as in Lemma \ref{inftyinftynorm}, taking into account that $\|\bbeta_{0S_0}\|_2 = 1$ we have that $\|([n^{-1}\Xbb_{,S_0}\T \Xbb_{,S_0}]^{-1} - \eye) \bbeta_{0S_0}\|_{\infty} \leq C_1\frac{s}{n}\|\bbeta_{0S_0}\|_{\infty} + C_2 \sqrt{\frac{\log p}{n}}$ and $\|([n^{-1} \widetilde \Xbb_{,S_0}\T \widetilde \Xbb_{,S_0}]^{-1} - \eye) \bbeta_{0S_0}\|_{\infty} \leq C_1\frac{s}{n}\|\bbeta_{0S_0}\|_{\infty} + C_2 \sqrt{\frac{\log p}{n}}$. Furthermore, $\bbeta_{0S_0}\T \Xbb_{,S_0}\T \bY/n$ is a mean $c_0$ random variable. Just as in the proof of Lemma \ref{subgaussboundnormal} by Chebyshev's inequality we have that with probability at least $1 - \frac{\gamma}{\log p}$ we have $|\bbeta_{0S_0}\T \Xbb_{,S_0}\T \bY/n| \leq |c_0| + \sqrt{\frac{\log p}{n}}$. 

Furthermore, notice that $n^{-1}\bbeta_{0S_0}\T \Xbb_{,S_0}^{*\intercal} \bY$ is a mean $0$ random variable. Conditionally on $\bY$ it has a $\cN(0, n^{-2} \sum Y_i^2)$ distribution. With exactly the same argument as in the proof of Lemma \ref{subgaussboundnormal} we conclude that with probability at least $1 - \frac{\eta}{n \sigma^4} - \frac{2}{p}$:
$$
|n^{-1}\bbeta_{0S_0}\T \Xbb_{,S_0}^{*\intercal} \bY| \leq 2 \sqrt{\sigma^2 \frac{\log p}{n}}.
$$
Hence, combining the above results we obtain:
\begin{align}\label{i1:i4:bound}
\|\bI_1\|_{\infty} + \|\bI_4\|_{\infty}& \leq \Big(C_1\frac{s}{n}\|\bbeta_{0S_0}\|_{\infty} + C_2 \sqrt{\frac{\log p}{n}}\Big)\left(|c_0| + \sqrt{\frac{\log p}{n}} +  2 \sqrt{\sigma^2  \frac{\log p}{n}}\right).
\end{align}
To deal with the term $\bI_2$ first note that by H\"{o}lder's inequality we have:
\begin{align}\label{i2:bound}
\|\bI_2\|_{\infty} \leq  \|n([\Xbb_{,S_0}\T \Xbb_{,S_0}]^{-1} - [\widetilde \Xbb_{,S_0}\T \widetilde \Xbb_{,S_0}]^{-1})\|_{\infty,\infty}\|(\eye - \bbeta_{0S_0} \bbeta_{0S_0}\T )n^{-1}\Xbb_{,S_0}\T \bY\|_{\infty}.
\end{align}
To deal with the first term we make usage of the following result:

\begin{lemma} \label{inftyinftynorm} Suppose that $s,n$ satisfy $\frac{s}{n}\leq \frac{1}{16}$. The following bound holds:
$$\|[n^{-1}\Xbb_{,S_0}\T \Xbb_{,S_0}]^{-1} - [n^{-1} \widetilde \Xbb_{,S_0}\T \widetilde \Xbb_{,S_0}]^{-1}\|_{\infty, \infty} \leq 40\sqrt{s}\left(C_1 \frac{s}{n}\|\bbeta_{0S_0}\|_{\infty} + \tilde C_2 \sqrt{\frac{\log p}{n}}\right),$$
with probability at least $1 - 4 \exp(-s/2) - \frac{12}{p} - \frac{4}{n}$, where $C_1 > 0$ and $\tilde C_2 = C_2 + 4$ are the same constants as in (\ref{i1:i4:bound}).
\end{lemma}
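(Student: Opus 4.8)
The plan is to run the resolvent identity on the two Gram matrices. Write $\Ab := n^{-1}\Xbb_{,S_0}\T\Xbb_{,S_0}$ and $\widetilde\Ab := n^{-1}\widetilde\Xbb_{,S_0}\T\widetilde\Xbb_{,S_0}$, so the quantity to control is $\|\Ab^{-1}-\widetilde\Ab^{-1}\|_{\infty,\infty}$, and
\[
\Ab^{-1}-\widetilde\Ab^{-1}=\Ab^{-1}(\widetilde\Ab-\Ab)\widetilde\Ab^{-1}.
\]
First I would record that the symmetrization preserves the row law: since for a $\cN(0,\eye_{s\times s})$ vector the component along $\bbeta_{0S_0}$ and the orthogonal component are independent, each row of $\widetilde\Xbb_{,S_0}$ is again $\cN(0,\eye_{s\times s})$, so Lemma \ref{cor535versh} with $t=\sqrt s$ (legitimate since $s/n\le 1/16$) yields $\|\Ab^{-1}\|_{2,2}\le 4$ and $\|\widetilde\Ab^{-1}\|_{2,2}\le 4$ on an event of probability at least $1-4e^{-s/2}$.

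Next I would extract the low-rank structure of the perturbation. Because the symmetrization only replaces the component of each row along $\bbeta_{0S_0}$ by an independent copy while leaving the orthogonal component untouched, the orthogonal--orthogonal blocks of $\Ab$ and $\widetilde\Ab$ coincide and
\[
\widetilde\Ab-\Ab=\mathbf{g}\,\bbeta_{0S_0}\T+\bbeta_{0S_0}\,\mathbf{g}\T+h\,\bbeta_{0S_0}\bbeta_{0S_0}\T,
\]
where, writing $u_i,u_i^\ast$ for the two independent coordinates along $\bbeta_{0S_0}$ and $\mathbf{v}_i$ for the common orthogonal part of the $i$-th row, $\mathbf{g}=n^{-1}\sum_i(u_i^\ast-u_i)\mathbf{v}_i\perp\bbeta_{0S_0}$ and $h=n^{-1}\sum_i\{(u_i^\ast)^2-u_i^2\}$ are mean-zero. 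The reason for isolating this rank-two, $\bbeta_{0S_0}$-confined form is that the large, identity-like parts of $\Ab^{-1}$ and $\widetilde\Ab^{-1}$ must then cancel.

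I would then collect the elementary bounds. Using independence of $u_i^\ast-u_i$ from $\mathbf{v}_i$ and standard Gaussian/sub-exponential concentration, $\|\mathbf{g}\|_2\lesssim\sqrt{s/n}$, $\|\mathbf{g}\|_\infty\lesssim\sqrt{\log p/n}$ and $|h|\lesssim\sqrt{\log p/n}$ on an event of probability $1-O(p^{-1})-O(n^{-1})$. For the single distinguished direction I would invoke Lemma \ref{lemma5wain:mod} with the fixed unit vector $\zb=\bbeta_{0S_0}$, giving $\|(\Ab^{-1}-\eye)\bbeta_{0S_0}\|_\infty$ and $\|(\widetilde\Ab^{-1}-\eye)\bbeta_{0S_0}\|_\infty$ bounded by $C_1\frac{s}{n}\|\bbeta_{0S_0}\|_\infty+C_2\sqrt{\log p/n}$, each with probability $1-4p^{-1}$. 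Substituting the displayed decomposition into the resolvent identity turns $\Ab^{-1}-\widetilde\Ab^{-1}$ into a sum of rank-one outer products of the vectors $\Ab^{-1}\bbeta_{0S_0}$, $\widetilde\Ab^{-1}\bbeta_{0S_0}$, $\Ab^{-1}\mathbf{g}$, $\widetilde\Ab^{-1}\mathbf{g}$; I would bound each piece through the exact identity $\|\mathbf{a}\mathbf{b}\T\|_{\infty,\infty}=\|\mathbf{a}\|_\infty\|\mathbf{b}\|_1$, converting $\ell_2$ to $\ell_1$ by $\|\cdot\|_1\le\sqrt s\,\|\cdot\|_2$ and using $\|\widetilde\Ab^{-1}\bbeta_{0S_0}\|_1\le\sqrt s\,\|\widetilde\Ab^{-1}\|_{2,2}\le 4\sqrt s$. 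The single factor $\sqrt s$ in the target is precisely this $\ell_2\!\to\!\ell_1$ conversion, the $C_1\frac{s}{n}\|\bbeta_{0S_0}\|_\infty$ and $C_2\sqrt{\log p/n}$ terms are inherited from Lemma \ref{lemma5wain:mod}, and the increment from $C_2$ to $\tilde C_2=C_2+4$ absorbs the $\mathbf{g}$- and $h$-fluctuations through $\|\mathbf{g}\|_\infty\,\|\bbeta_{0S_0}\|_1\le\sqrt s\,\|\mathbf{g}\|_\infty$.

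The hard part is obtaining this $\ell_\infty\!\to\!\ell_\infty$ operator norm of a difference of inverses with only one power of $\sqrt s$ and the correct $\|\bbeta_{0S_0}\|_\infty$-dependence. A direct submultiplicative bound $\|\Ab^{-1}\|_{\infty,\infty}\|\widetilde\Ab-\Ab\|_{\infty,\infty}\|\widetilde\Ab^{-1}\|_{\infty,\infty}$ loses an extra full factor $s$ and is hopeless; moreover the rank-one term $\bbeta_{0S_0}(\widetilde\Ab^{-1}\mathbf{g})\T$, bounded crudely, scales like $\|\bbeta_{0S_0}\|_\infty\|\mathbf{g}\|_1\asymp\|\bbeta_{0S_0}\|_\infty\,s/\sqrt n$, which is too large. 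One must therefore exploit that the perturbation acts only in the $\bbeta_{0S_0}$ direction (equivalently that $\Ab$ and $\widetilde\Ab$ share their orthogonal block), so that after cancellation $\mathbf{g}$ effectively enters only paired against $\bbeta_{0S_0}$ via $\|\mathbf{g}\|_\infty\|\bbeta_{0S_0}\|_1$ rather than via $\|\mathbf{g}\|_1\|\bbeta_{0S_0}\|_\infty$. Managing this cancellation, together with bookkeeping the several failure probabilities into $1-4e^{-s/2}-12p^{-1}-4n^{-1}$, is where the real care lies.
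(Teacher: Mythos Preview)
Your plan has a real gap, precisely at the point you flag as ``the hard part.'' After the resolvent expansion the three rank-one pieces are
\[
(\Ab^{-1}\mathbf{g})(\widetilde\Ab^{-1}\bbeta_{0S_0})\T,\quad (\Ab^{-1}\bbeta_{0S_0})(\widetilde\Ab^{-1}\mathbf{g})\T,\quad h\,(\Ab^{-1}\bbeta_{0S_0})(\widetilde\Ab^{-1}\bbeta_{0S_0})\T,
\]
and the middle one has $\|\cdot\|_{\infty,\infty}$ equal to $\|\Ab^{-1}\bbeta_{0S_0}\|_\infty\,\|\widetilde\Ab^{-1}\mathbf{g}\|_1$. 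The leading part of this is $\|\bbeta_{0S_0}\|_\infty\cdot\|\mathbf{g}\|_1\asymp\|\bbeta_{0S_0}\|_\infty\,s/\sqrt n$, exactly the quantity you correctly diagnose as too large. The promised ``cancellation'' does not exist: the three terms do not interact at this order, and indeed the raw perturbation $\widetilde\Ab-\Ab$ itself already contains the piece $\bbeta_{0S_0}\mathbf{g}\T$ whose $\|\cdot\|_{\infty,\infty}$ is $\|\bbeta_{0S_0}\|_\infty\|\mathbf{g}\|_1$. There is no mechanism, via the shared orthogonal block or otherwise, that converts $\|\mathbf{g}\|_1\|\bbeta_{0S_0}\|_\infty$ into $\|\mathbf{g}\|_\infty\|\bbeta_{0S_0}\|_1$ in this symmetric expansion. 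In the regime $s\gg\log p$ with $\|\bbeta_{0S_0}\|_\infty$ of constant order, your bound is genuinely worse than the target.

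The paper avoids this by a different device: it does \emph{not} use the symmetric resolvent identity. Instead it inserts the \emph{nonsymmetric} intermediate matrix $n^{-1}\widetilde\Xbb_{,S_0}\T\Xbb_{,S_0}+\bbeta_{0S_0}\bbeta_{0S_0}\T$ and applies Sherman--Morrison twice. Each of the two rank-one updates is of the form $\bbeta_{0S_0}\cdot(\text{row})$ or $(\text{column})\cdot\bbeta_{0S_0}\T$, and the row/column factor always takes the shape $\bbeta_{0S_0}\T\Mb\Ab^{-1}$ with $\Mb\Ab^{-1}=\eye-\Ab^{-1}-n^{-1}\Xbb^{\ast\intercal}\Xbb\Ab^{-1}$ (and the analogous $\widetilde\Mb$). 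This is the crucial structural point: Lemma~\ref{lemma5wain:mod} then applies \emph{directly to the fixed vector} $\bbeta_{0S_0}$ to control $\|(\Ab^{-1}-\eye)\bbeta_{0S_0}\|_\infty$, and the remaining piece $n^{-1}\bbeta_{0S_0}\T\Xbb^{\ast\intercal}\Xbb\Ab^{-1}$ is, conditionally on $\Xbb$, Gaussian with covariance $n^{-1}\Ab^{-1}$ and hence has $\ell_\infty$ norm $\lesssim\sqrt{\log p/n}$ (this is where the ``$+4$'' in $\tilde C_2$ comes from). The random vector $\mathbf{g}$ never appears by itself; it is always hit on one side by $\bbeta_{0S_0}\T$, which is what you wanted but could not obtain from the symmetric resolvent. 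To repair your argument you would need either this asymmetric two-step route or an equivalent block-inverse computation that keeps $\bbeta_{0S_0}$ on the correct side throughout.
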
 

Note that the second term is a mean 0 random variable since $(\eye - \bbeta_{0S_0} \bbeta_{0S_0}\T) \Xbb_{,S_0}$ is independent of $\bY$. Just as in Lemma \ref{subgaussboundnormal} we can show that $\|(\eye - \bbeta_{0S_0} \bbeta_{0S_0}\T )n^{-1}\Xbb_{,S_0}\T \bY\|_{\infty} \leq 2\sqrt{2 \sigma^2 \frac{\log p}{n}}$ with probability at least $1 - \frac{2s}{p^2} \geq 1 - \frac{2}{p}$ (this event is in fact a sub-event of the bounds of the first term $n^{-1} \Xbb_{,S_0}\T \bY - c_0 \bbeta_{0S_0}$). Lemma \ref{inftyinftynorm} gives us a bound on $\|n([\Xbb_{,S_0}\T \Xbb_{,S_0}]^{-1} - [\widetilde \Xbb_{,S_0}\T \widetilde \Xbb_{,S_0}]^{-1})\|_{\infty,\infty}$ which in conjunction with the previous inequality suffices to control the term $\bI_2$. 

Finally, to deal with the term $\bI_4$ we will make use of the following:
\begin{lemma} \label{sphereconclemma} Let $\frac{s}{n} \leq \frac{1}{64}$. Then there exists a constant $\Upsilon \asymp \sigma > 0$, such that the term:
$$
\|(n[\widetilde \Xbb_{,S_0}\T \widetilde \Xbb_{,S_0}]^{-1} - \eye)n^{-1} \widetilde \Xbb_{,S_0}\T \bY\|_{\infty} \leq \Upsilon \sqrt{\frac{\log p}{n}},
$$
with probability at least $1 - \frac{2}{p} - \frac{\eta}{n \sigma^4} - 2\exp(-s/2)$.
\end{lemma}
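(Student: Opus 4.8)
The plan is to leverage the independence between the symmetrized design $\widetilde\Xbb_{,S_0}$ and the response $\bY$ that was arranged in the construction above: conditionally on $\bY$, the matrix $\widetilde\Xbb_{,S_0}$ is still a standard Gaussian $n\times s$ matrix, so the quantity in the display is a functional of a Gaussian matrix and a frozen vector $\bY$. First I would observe that both $\widetilde\Xbb_{,S_0}\T\widetilde\Xbb_{,S_0}$ and $\widetilde\Xbb_{,S_0}\T\bY$ are left unchanged if one simultaneously replaces $\widetilde\Xbb_{,S_0}$ by $\mathbf{O}\widetilde\Xbb_{,S_0}$ and $\bY$ by $\mathbf{O}\bY$ for an orthogonal $\mathbf{O}\in\RR^{n\times n}$; since conditionally on $\bY$ the matrix $\mathbf{O}\widetilde\Xbb_{,S_0}$ has the same law as $\widetilde\Xbb_{,S_0}$, by rotational invariance I may assume $\bY=\|\bY\|_2\eb_1$. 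Then $\widetilde\Xbb_{,S_0}\T\bY=\|\bY\|_2\mathbf{a}$, where $\mathbf{a}\in\RR^s$ is the first row of $\widetilde\Xbb_{,S_0}$, and the target simplifies to $\bI_3=\|\bY\|_2\big([\widetilde\Xbb_{,S_0}\T\widetilde\Xbb_{,S_0}]^{-1}-n^{-1}\eye\big)\mathbf{a}$.

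Next I would split off the first row, writing $\widetilde\Xbb_{,S_0}\T\widetilde\Xbb_{,S_0}=\mathbf{a}\mathbf{a}\T+\mathbf{B}$, where $\mathbf{B}$ collects the remaining $n-1$ rows and is a Wishart matrix on $n-1$ degrees of freedom, crucially independent of $\mathbf{a}$. Applying the Sherman--Morrison identity gives
$$\bI_3=\frac{\|\bY\|_2}{1+q}\Big[(\mathbf{B}^{-1}-n^{-1}\eye)\mathbf{a}-n^{-1}q\,\mathbf{a}\Big],\qquad q:=\mathbf{a}\T\mathbf{B}^{-1}\mathbf{a}.$$
This decomposition is the heart of the argument: it restores exactly the ``fixed vector times deviation of an independent inverse-Wishart'' structure, since conditionally on $\mathbf{a}$ the matrix $\mathbf{B}$ is independent of it.

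It then remains to bound the three ingredients. For $\|\bY\|_2$ I would use Chebyshev on $\overline{Y^2}=n^{-1}\sum_i Y_i^2$ exactly as in the proof of Lemma \ref{subgaussboundnormal}, giving $\|\bY\|_2\le\sqrt{2\sigma^2 n}$ off an event of probability $\eta/(n\sigma^4)$. Conditioning on $\mathbf{a}$ and invoking Lemma \ref{lemma5wain:mod} with $n-1$ in place of $n$ (legitimate because $\mathbf{B}$ is independent of $\mathbf{a}$) controls $\|(\mathbf{B}^{-1}-(n-1)^{-1}\eye)\mathbf{a}\|_\infty$ by $(n-1)^{-1}\big(C_1\tfrac{s}{n-1}\|\mathbf{a}\|_\infty+C_2\|\mathbf{a}\|_2\sqrt{\tfrac{\log p}{n-1}}\big)$, the discrepancy between $(n-1)^{-1}$ and $n^{-1}$ being negligible. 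Standard Gaussian maximal and $\chi^2$ bounds give $\|\mathbf{a}\|_\infty=O(\sqrt{\log p})$ and $\|\mathbf{a}\|_2\le 2\sqrt{s}$, while Lemma \ref{cor535versh} yields $\mathbf{B}^{-1}\preceq \tfrac{2}{n}\eye$ and hence $q\le \tfrac{2}{n}\|\mathbf{a}\|_2^2=O(s/n)$. Collecting terms, the dominant contribution is $\|\bY\|_2\,n^{-1}C_2\|\mathbf{a}\|_2\sqrt{\log p/n}$, which is of order $\sigma\sqrt{n}\cdot n^{-1}\sqrt{s}\,\sqrt{\log p/n}=\sigma\sqrt{s/n}\,\sqrt{\log p/n}$; since $s/n\le 1/64$ this is bounded by $\Upsilon\sqrt{\log p/n}$ with $\Upsilon\asymp\sigma$, and intersecting the above events produces the stated probability $1-\tfrac{2}{p}-\tfrac{\eta}{n\sigma^4}-2\exp(-s/2)$ (after absorbing absolute constants into the $1/p$ term).

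The main obstacle I anticipate is precisely the statistical dependence between the Gram matrix $\widetilde\Xbb_{,S_0}\T\widetilde\Xbb_{,S_0}$ and the cross term $\widetilde\Xbb_{,S_0}\T\bY$, which rules out a direct application of Lemma \ref{lemma5wain:mod} to the pair. The rotation step is what concentrates the entire $\bY$-direction into the single row $\mathbf{a}$, and the leave-one-out split then isolates an independent Wishart block $\mathbf{B}$; together they recover the independence that Lemma \ref{lemma5wain:mod} requires. A secondary, more routine, nuisance is verifying that the $\sqrt{s/n}$ gained from the $\ell_\infty$ analysis is genuinely needed: a crude operator-norm bound only yields $O(\sigma s/n)$, which can exceed $\sqrt{\log p/n}$ when $s\gg(\log p)^2$, so the coordinatewise bound of Lemma \ref{lemma5wain:mod} cannot be bypassed.
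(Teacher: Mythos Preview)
Your argument is correct but takes a genuinely different route from the paper's own proof. The paper proceeds via the singular value decomposition $\widetilde\Xbb_{,S_0}=\Ub\Db\Vb\T$ (whose three factors are mutually independent by Gaussian rotational invariance), rewrites the target as $\Vb\Wb\,n^{-1/2}\Ub\T\bY$ with $\Wb=(n\Db^{-2}-\eye)n^{-1/2}\Db$, bounds $\|\Wb\|_{2,2}\le 9\sqrt{s/n}$ via Lemma~\ref{cor535versh}, and then---conditioning on $\Db$, $\Ub$, $\bY$---treats each coordinate as a Lipschitz function of the uniformly distributed row $\vb_i$ on the sphere $\mathbb{S}^{s-1}$, invoking L\'evy-type concentration for Lipschitz functions on the sphere. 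Your approach instead rotates $\bY$ to a coordinate axis, peels off the single row $\mathbf{a}$ that carries all the $\bY$-dependence, and applies Sherman--Morrison to recover a Wishart block $\mathbf{B}$ independent of $\mathbf{a}$; this lets you reuse Lemma~\ref{lemma5wain:mod} directly rather than appealing to spherical concentration.

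Both arguments hinge on the same underlying symmetry (left rotational invariance of $\widetilde\Xbb_{,S_0}$ combined with its independence from $\bY$), but they exploit it differently. The paper's SVD route is more streamlined: it handles the union over coordinates in one concentration step and yields the stated probability constants cleanly. Your leave-one-out route is more elementary in that it avoids any measure-concentration machinery beyond what the paper already uses elsewhere (Lemmas~\ref{cor535versh} and~\ref{lemma5wain:mod}), at the cost of tracking a few extra events ($\|\mathbf{a}\|_\infty$, $\|\mathbf{a}\|_2$, the $n$ versus $n-1$ correction) and slightly inflating the $1/p$ and $\exp(-s/2)$ constants; you correctly note that these can be absorbed. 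Your observation that a crude operator-norm bound would only give $O(\sigma s/n)$ is also the right diagnosis: in the paper's proof the needed $\sqrt{s/n}$ saving comes from the Lipschitz-on-the-sphere step, whereas in yours it comes from the $\|\zb\|_2$ (rather than $\sqrt{s}\|\zb\|_\infty$) dependence in Lemma~\ref{lemma5wain:mod}.
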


Applying Lemma \ref{sphereconclemma} we have in conjunction with our previous bounds (\ref{i1:i4:bound}) and (\ref{i2:bound}) we get:
\begin{align*}
\|[\Xbb_{,S_0}\T \Xbb_{,S_0}]^{-1} \Xbb_{,S_0}\T \bY - c_0 \bbeta_{0S_0}\|_{\infty} & \leq \Big(C_1\frac{s}{n}\|\bbeta_{0S_0}\|_{\infty} + C_2 \sqrt{\frac{\log p}{n}}\Big)\left(|c_0| + \sqrt{\frac{\log p}{n}} +  2 \sqrt{\sigma^2  \frac{\log p}{n}}\right)\\
& + 80\sqrt{s}\left(C_1 \frac{s}{n}\|\bbeta_{0S_0}\|_{\infty} + \tilde C_2 \sqrt{\frac{\log p}{n}}\right)\sqrt{2 \sigma^2 \frac{\log p}{n}}\\
& + \Upsilon \sqrt{\frac{\log p}{n}} +  \|\bbeta_{0S_0}\|_{\infty}\sqrt{\frac{\log p}{n}}+  2\sqrt{\sigma^2 \frac{\log p}{n}},
\end{align*}
with probability at least $1 - 4 \exp(-s/2) - \frac{4}{n} - \frac{18}{p} -2\frac{\eta}{n \sigma^4} - 2\frac{\gamma}{\log p}$\footnote{Here we are recognizing the fact that the events of some probability bounds we derived above, in fact coincide.}, which finishes the proof, after grouping terms and recalling the fact that $\log(p-s) \asymp \log p$.
\end{proof}

\begin{proof}[Proof of Lemma \ref{inftyinftynorm}] We first compare $[n^{-1}\Xbb_{,S_0}\T \Xbb_{,S_0}]^{-1}$ to $[n^{-1} \widetilde \Xbb_{,S_0}\T \Xbb_{,S_0} + \bbeta_{0S_0} \bbeta_{0S_0}\T]^{-1}$. The latter matrix might happen to be non-invertible but this is irrelevant for our proof as we argue below. 
Using Woodbury's matrix identity we have:
$$
[n^{-1} \widetilde \Xbb_{,S_0}\T \Xbb_{,S_0} + \bbeta_{0S_0} \bbeta_{0S_0}\T]^{-1} - [n^{-1}\Xbb_{,S_0}\T \Xbb_{,S_0}]^{-1} = \frac{ [n^{-1}\Xbb_{,S_0}\T \Xbb_{,S_0}]^{-1} \bbeta_{0S_0} \bbeta_{0S_0}\T \Mb [n^{-1}\Xbb_{,S_0}\T \Xbb_{,S_0}]^{-1}}{1 - \bbeta_{0S_0}\T \Mb [n^{-1}\Xbb_{,S_0}\T \Xbb_{,S_0}]^{-1} \bbeta_{0S_0}},
$$
where $\Mb = n^{-1}\Xbb_{,S_0}\T \Xbb_{,S_0} - \eye - n^{-1} \Xbb_{,S_0}^{*\intercal}\Xbb_{,S_0}$.  Note that whenever the right hand side of Woodbury's identity is well defined, the matrix $n^{-1} \widetilde \Xbb_{,S_0}\T \Xbb_{,S_0} + \bbeta_{0S_0} \bbeta_{0S_0}\T$ is indeed invertible, and the inverse satisfies the above identity. As we argue below the right hand side is well defined (i.e. the denominator is non-zero) with high probability hence the proof goes through. 
Next we handle the term $\bbeta_{0S_0}\T \Mb [n^{-1}\Xbb_{,S_0}\T \Xbb_{,S_0}]^{-1}$. By the triangle inequality have:
$$
\|\bbeta_{0S_0}\T \Mb [n^{-1}\Xbb_{,S_0}\T \Xbb_{,S_0}]^{-1}\|_{\infty} \leq \|\bbeta_{0S_0}\T ([n^{-1}\Xbb_{,S_0}\T \Xbb_{,S_0}]^{-1} - \eye)\|_{\infty} + \|\bbeta_{0S_0}\T n^{-1} \Xbb_{,S_0}^{*\intercal}\Xbb_{,S_0} [n^{-1}\Xbb_{,S_0}\T \Xbb_{,S_0}]^{-1}\|_{\infty}.
$$
For the first term Lemma \ref{lemma5wain:mod} is directly applicable. Applying this lemma gives us the existence of constants $C_1$ and $C_2$ such that:
$$\|([n^{-1}\Xbb_{,S_0}\T \Xbb_{,S_0}]^{-1} - \eye) \bbeta_{0S_0}\|_{\infty} \leq C_1 \frac{s}{n}\|\bbeta_{0S_0}\|_{\infty} + C_2 \sqrt{\frac{\log p}{n}},$$
 with probability at least $1 - 4p^{-1}$. For the second term, we have that conditionally on $\Xbb_{,S_0}$ it has a normal distribution: $\cN(0, n^{-1} (n^{-1}\Xbb_{,S_0}\T \Xbb_{,S_0})^{-1})$. Since $\Xbb_{,S_0}$ is standard normal, we can apply Lemma \ref{cor535versh} to claim that $\|n(\Xbb_{,S_0}\T \Xbb_{,S_0})^{-1}\|_{2,2} \leq \left(\frac{1}{1 - \sqrt{\frac{s}{n}} - t}\right)^2$ with probability at least $1 - 2 \exp(-nt^2/2)$. Taking $t = \sqrt{\frac{s}{n}}$ gives us that $\|n(\Xbb_{,S_0}\T \Xbb_{,S_0})^{-1}\|_{2,2} \leq \frac{1}{(1 - 2\sqrt{\frac{s}{n}})^2}$ with probability at least $1 - 2 \exp(-s/2)$. Thus conditioning on this event, by a standard normal tail bound and a union bound we have:
$$
\p(\|\bbeta_{0S_0}\T n^{-1} \Xbb_{,S_0}^{*\intercal}\Xbb_{,S_0} [n^{-1}\Xbb_{,S_0}\T \Xbb_{,S_0}]^{-1}\|_{\infty} \geq t) \leq 2 s \exp\left(- t^2n\left(1 - 2\sqrt{\frac{s}{n}}\right)^2/{2}\right).
$$
Selecting $t = 4\sqrt{\frac{\log p}{n}}$, we get the probability above is bounded by $\frac{2s}{p^2} \leq \frac{2}{p}$ (where we used the assumption $\sqrt{\frac{s}{n}} \leq \frac{1}{4}$). So finally on the intersection event we have:
$$
\|\bbeta_{0S_0}\T \Mb [n^{-1}\Xbb_{,S_0}\T \Xbb_{,S_0}]^{-1}\|_{\infty} \leq C_1 \frac{s}{n}\|\bbeta_{0S_0}\|_{\infty} + \tilde C_2 \sqrt{\frac{\log p}{n}},
$$
with probability at least $1 - 6p^{-1} - 2\exp(-s/2)$ where $\tilde C_2 = C_2 + 4$. Let us now consider the denominator:
\begin{align*}
& 1 - \bbeta_{0S_0}\T \Mb [n^{-1}\Xbb_{,S_0}\T \Xbb_{,S_0}]^{-1} \bbeta_{0S_0}\\
= & 1 -  \bbeta_{0S_0}\T( \eye -  [n^{-1}\Xbb_{,S_0}\T \Xbb_{,S_0}]^{-1}) \bbeta_{0S_0} + n^{-1}\bbeta_{0S_0}\T \Xbb_{,S_0}^{*\intercal} \Xbb_{,S_0} [n^{-1}\Xbb_{,S_0}\T \Xbb_{,S_0}]^{-1} \bbeta_{0S_0}\\
= & \bbeta_{0S_0}\T [n^{-1}\Xbb_{,S_0}\T \Xbb_{,S_0}]^{-1}\bbeta_{0S_0} + n^{-1}\bbeta_{0S_0}\T \Xbb_{,S_0}^{*\intercal} \Xbb_{,S_0} [n^{-1}\Xbb_{,S_0}\T \Xbb_{,S_0}]^{-1} \bbeta_{0S_0}.
\end{align*}
Using Lemma \ref{cor535versh} we have $\lambda_{\min}([n^{-1}\Xbb_{,S_0}\T \Xbb_{,S_0}]^{-1})  \geq \frac{1}{(1 + 2\sqrt{\frac{s}{n}})^2} > \frac{1}{4}$ with the last bound holding since $\frac{s}{n} < \frac{1}{4}$. Hence $\bbeta_{0S_0}\T [n^{-1}\Xbb_{,S_0}\T \Xbb_{,S_0}]^{-1}\bbeta_{0S_0} \geq \frac{1}{4}$. For the second term just as before, conditionally on $\Xbb_{,S_0}$ we have 
$$n^{-1}\bbeta_{0S_0}\T \Xbb_{,S_0}^{*\intercal} \Xbb_{,S_0} [n^{-1}\Xbb_{,S_0}\T \Xbb_{,S_0}]^{-1} \bbeta_{0S_0} \sim \cN(0, n^{-1}\bbeta_{0S_0}\T [n^{-1}\Xbb_{,S_0}\T \Xbb_{,S_0}]^{-1}\bbeta_{0S_0}).$$ 
Then (given that $\|n(\Xbb_{,S_0}\T \Xbb_{,S_0})^{-1}\|_{2,2} \leq \frac{1}{(1 - 2\sqrt{\frac{s}{n}})^2}$)  by a standard tail bound we have that the second term is $\leq 4 \sqrt{\frac{\log n}{n}}$ with probability at least $1 - \frac{2}{n}$. Putting everything together we have:
$$
1 - \bbeta_{0S_0}\T \Mb [n^{-1}\Xbb_{,S_0}\T \Xbb_{,S_0}]^{-1} \bbeta_{0S_0} \geq \frac{1}{4} - 4 \sqrt{\frac{\log n}{n}}.
$$
The last expression is clearly bigger than $\frac{1}{5}$ for large enough values of $n$. Hence we conclude that with high probability we have:
\begin{align*}
& \|[n^{-1} \widetilde \Xbb_{,S_0}\T \Xbb_{,S_0} + \bbeta_{0S_0} \bbeta_{0S_0}\T]^{-1} - [n^{-1}\Xbb_{,S_0}\T \Xbb_{,S_0}]^{-1}\|_{\infty,\infty} \\
& \leq 5 \|[n^{-1}\Xbb_{,S_0}\T \Xbb_{,S_0}]^{-1} \bbeta_{0S_0}\|_{1}\|\bbeta_{0S_0}\T \Mb [n^{-1}\Xbb_{,S_0}\T \Xbb_{,S_0}]^{-1}\|_{\infty} 
\end{align*}
For the first term, by the definition of matrix $\|\cdot\|_{2,2}$ norm we further have:
\begin{align*}
\|[n^{-1}\Xbb_{,S_0}\T \Xbb_{,S_0}]^{-1} \bbeta_{0S_0}\|_{1} & \leq \sqrt{s} \|[n^{-1}\Xbb_{,S_0}\T \Xbb_{,S_0}]^{-1} \bbeta_{0S_0}\|_{2} \leq  \sqrt{s} \|\bbeta_{0S_0}\|_2 \|[n^{-1}\Xbb_{,S_0}\T \Xbb_{,S_0}]^{-1}\|_{2,2}\\
& \leq \frac{\sqrt{s}}{(1 - 2\sqrt{\frac{s}{n}})^2}.
\end{align*}
Combining this inequality with our previous bound we get:
$$
\|[n^{-1} \widetilde \Xbb_{,S_0}\T \Xbb_{,S_0} + \bbeta_{0S_0} \bbeta_{0S_0}\T]^{-1} - [n^{-1}\Xbb_{,S_0}\T \Xbb_{,S_0}]^{-1}\|_{\infty,\infty} \leq  \frac{5 \sqrt{s}}{(1 - 2\sqrt{\frac{s}{n}})^2}\left(C_1 \frac{s}{n}\|\bbeta_{0S_0}\|_{\infty} + \tilde C_2 \sqrt{\frac{\log p}{n}}\right).
$$

Next we show that $[n^{-1} \widetilde \Xbb_{,S_0}\T \Xbb_{,S_0} + \bbeta_{0S_0} \bbeta_{0S_0}\T]^{-1}$ is also close to $[n^{-1}\widetilde \Xbb_{,S_0}\T\widetilde \Xbb_{,S_0}]^{-1}$. Another usage of Woodbury's matrix identity yields:
$$
[n^{-1} \widetilde \Xbb_{,S_0}\T \Xbb_{,S_0} + \bbeta_{0S_0} \bbeta_{0S_0}\T]^{-1} - [n^{-1}\widetilde \Xbb_{,S_0}\T \widetilde \Xbb_{,S_0}]^{-1} = \frac{ [n^{-1}\widetilde \Xbb_{,S_0}\T \widetilde \Xbb_{,S_0}]^{-1} \widetilde \Mb \bbeta_{0S_0} \bbeta_{0S_0}\T [n^{-1}\widetilde \Xbb_{,S_0}\T \widetilde \Xbb_{,S_0}]^{-1}}{1 - \bbeta_{0S_0}\T  [n^{-1}\widetilde \Xbb_{,S_0}\T \widetilde \Xbb_{,S_0}]^{-1} \widetilde \Mb\bbeta_{0S_0}},
$$
where $\widetilde \Mb = n^{-1}\widetilde \Xbb_{,S_0}\T \widetilde \Xbb_{,S_0} - \eye - n^{-1}\widetilde \Xbb_{,S_0}\T \Xbb_{,S_0}$. Note that since $\widetilde \Xbb_{,S_0}\T \independent \Xbb_{,S_0} \bbeta_{0S_0} $, the same argument as before goes through. Combining the bounds with a triangle inequality completes the proof, using the fact that $\sqrt{\frac{s}{n}} \leq \frac{1}{4}$.
\end{proof}

\begin{proof}[Proof of Lemma \ref{sphereconclemma}] We first perform a singular value decomposition on the $\widetilde \Xbb_{,S_0} = \Ub_{n \times s} \Db_{s \times s} \Vb_{s \times s}\T$ matrix. Note that since multiplying $\widetilde \Xbb_{,S_0}$ by a unitary $s \times s$ matrix on the right, or by a unitary $n \times n$ matrix on the left doesn't change the distribution of $\widetilde \Xbb_{,S_0}$ we conclude that the matrices $\Ub, \Db$ and $\Vb$ are independent. This representation gives us that $(n^{-1}\widetilde \Xbb_{,S_0}\T \widetilde \Xbb_{,S_0})^{-1} - \eye = \Vb (n\Db^{-2} - \eye) \Vb\T$. With this notation we can rewrite:
$$
(n[\widetilde \Xbb_{,S_0}\T \widetilde \Xbb_{,S_0}]^{-1} - \eye)n^{-1} \widetilde \Xbb_{,S_0}\T \bY = \Vb \underbrace{(n\Db^{-2} - \eye) n^{-1/2} \Db}_{\Wb} n^{-1/2} \Ub\T \bY.
$$
We recall that by construction $\widetilde \Xbb_{,S_0}$ is independent of $\bY$. The elements of the matrix $\Wb$ can be bounded in a simple manner. We have $\|\Wb\|_{2,2} \leq \|(n \Db^{-2} - \eye)\|_{2,2} \|n^{-1/2} \Db\|_{2,2}$, and by Lemma \ref{cor535versh}, as before we have: $\|(n \Db^{-2} - \eye)\|_{2,2} \leq  \frac{1}{(1 - 2\sqrt{\frac{s}{n}})^2} - 1 \leq \frac{4\sqrt{\frac{s}{n}}}{(1 - 2\sqrt{\frac{s}{n}})^2}$ and $\|n^{-1/2}\Db\|_{2,2} \leq 1 + 2\sqrt{\frac{s}{n}}$ with probability at least $1 - 2 \exp(-s/2)$. We will condition on the event $\|\Wb\|_{2,2} \leq \frac{4\sqrt{\frac{s}{n}}}{(1 - 2\sqrt{\frac{s}{n}})^2}(1 + 2\sqrt{\frac{s}{n}}) < 9\sqrt{\frac{s}{n}}$, with the last inequality holding for $\sqrt{\frac{s}{n}} \leq \frac{1}{8}$. Since every random variable in the above display is independent from $\Wb$, the distributions of $\Vb, \Ub$ and $\bY$ stay unchanged under this conditioning. Let $\eb_i$ be a unit vector with $1$ on the $i$\textsuperscript{th} position. Since we are interested in bounding the $\|\cdot\|_{\infty}$ we will start with the following:
$$
\eb_i\T (n[\widetilde \Xbb_{,S_0}\T \widetilde \Xbb_{,S_0}]^{-1} - \eye)n^{-1} \widetilde \Xbb_{,S_0}\T \bY = \vb_i\T \Wb[n^{-1/2} \Ub\T \bY],
$$
where $\vb_i\T$ is the $i$\textsuperscript{th} row of the matrix $\Vb$. Condition on the vector $n^{-1/2} \Ub\T \bY$. Since $\vb_i$ is independent of $n^{-1/2} \Ub\T \bY$ it follows that the distribution of $\vb_i$ is uniform on the unit sphere in $\mathbb{R}^s$. We next show that the function $F(\vb_i) = \vb_i\T \Wb[n^{-1/2} \Ub\T \bY]$ is Lipschitz. We have:
\begin{align*}
\|\nabla F\|_2 & \leq \|\Wb\|_{2,2} \|n^{-1/2} \Ub\T \bY\|_2 \leq 9\sqrt{\frac{s}{n}} n^{-1/2} \sqrt{\sum_{i = 1}^s (\ub_i\T \bY)^2}\\
& \leq 9\sqrt{\frac{s}{n}} n^{-1/2} \|\bY\|_2,
\end{align*}
where the last inequality follows from the fact that the vectors $\ub_i$ are orthonormal and hence $\sum_{i = 1}^s (\ub_i\T \bY)^2 \leq \|\bY\|^2_2$. Since $Y_i$ are assumed to have finite second moment, by Chebyshev's inequality we have that:
$$
\p(|n^{-1} \|\bY\|^2_2 - \sigma^2| \geq t) \leq \frac{\eta}{n t^2}.
$$
Selecting $t = \sigma^2$ is sufficient to keep the above probability going to 0, and furthermore for $n$ large enough guarantees that $n^{-1} \|\bY\|^2_2 \leq 2 \sigma^2 $ and hence $n^{-1/2}\|\bY\|_2 \leq \sqrt{2} \sigma$. Thus conditional on this event the function $F$ is Lipschitz with a constant equal to $\sqrt{2} 9\sigma \sqrt{\frac{s}{n}}$. Since the expectation of the function $F$ is 0, by concentration of measure for Lipschitz functions on the sphere \citep{ledoux2005concentration, ledoux2013probability}, for any $t > 0$ we have:
$$
\p(|F(\vb_i)| \geq t \sigma) \leq 2\exp\left(- \tilde c s \frac{t^2}{162 \frac{s}{n}}\right),
$$
for some absolute constant $\tilde c > 0$. Taking a union bound the above becomes:
$$
\p(\max_{i \in [s]}|F(\vb_i)| \geq t \sigma) \leq 2 s\exp\left( - \tilde c  \frac{t^2n}{162}\right).
$$
Selecting $t = 18 \sqrt{\frac{\log p}{\tilde c n}}$, keeps the probability vanishing at a rate faster than $2s/p^2 \leq 2/p$ and completes the proof.
\end{proof}

\begin{proof}[Proof of Corollary \ref{cor:mainres:transform}] Tracing the proof of Theorem \ref{mainresult} we realize that it suffices  to show the following two quantities remain well controlled under the usage of $\hat g$:
\begin{itemize}
\item[i.] $|n^{-1}\sum_{i = 1}^n\bX_i\T\bbeta_0 \hat g(Y_i) - c_0| \leq O(\sqrt{\log p/n})$,
\item[ii.] $n^{-1} \sum_{i = 1}^n \hat g^2(Y_i) = O(1)$,
\end{itemize}
with probability at least $1 - O(p^{-1})$ and $1 - O(n^{-1})$ correspondingly.
To deal with i. observe that:
\begin{align*}
\left| n^{-1}\sum_{i = 1}^n\bX_i\T\bbeta_0 \hat g(Y_i) - c_0\right| & \leq \left|n^{-1}\sum_{i = 1}^n\bX_i\T\bbeta_0 g(Y_i) - c_0\right| + \left|n^{-1}\sum_{i = 1}^n\bX_i\T\bbeta_0 (g(Y_i) - \hat g(Y_i))\right| \\
& \leq \underbrace{\left|n^{-1}\sum_{i = 1}^n\bX_i\T\bbeta_0 g(Y_i) - c_0\right|}_{I_1} \\
& + \underbrace{\Big\{n^{-1}\sum_{i = 1}^n (\bX_i\T\bbeta_0)^2\Big\}^{1/2}\Big\{n^{-1}\sum_{i = 1}^n (\hat g(Y_i) - g(Y_i))^2\Big\}^{1/2}}_{I_2}.
\end{align*}
The term $I_1$ remains controlled by the proof of Theorem \ref{mainresult}, while for the term $I_2$ we have:
$$
I_2 \leq O(\sqrt{\log p/n}), 
$$
with probability at least $1 - O(p^{-1})$, where we used the assumption on $\hat g$ and the fact that the random variables $(\bX_i\T\bbeta_0)^2 \sim \chi^2_{1}$ and hence concentrate exponentially about their mean --- $1$, by a standard tail bound \citep{boucheron2013concentration}. 

Next, for ii., by the triangle inequality we have:
$$
\sqrt{n^{-1} \sum_{i = 1}^n \hat g^2(Y_i)} \leq \sqrt{n^{-1} \sum_{i = 1}^n g^2(Y_i)} + \sqrt{n^{-1} \sum_{i = 1}^n (\hat g(Y_i) - g(Y_i))^2}.
$$
The first term is well controlled as before and is $O(1)$ with probability at least $1 - O(n^{-1})$ and the second term is at most $O(\sqrt{\log p/n})$ with probability at least $1 - O(p^{-1})$ by assumption which concludes the proof.
\end{proof}

\begin{proof}[Proof of Proposition \ref{rao:blackwell:type:of:res}] First let $g$ be such that $\EE\{g(Y)\bX\T\bbeta_0\} \neq 0$. Recall that $\EE(\bX\T\bbeta_0) = 0$. Hence by Cauchy-Schwartz we have:
$$
0 < [\EE\{g(Y)\bX\T\bbeta_0\}]^2 = (\EE [g(Y) \EE\{\bX\T\bbeta_0|Y\}])^2 \leq \Var\{g(Y)\} \Var\{\EE(\bX\T\bbeta_0 | Y)\} ,
$$
and therefore $\Var\{\EE(\bX\T\bbeta_0 | Y)\}  > 0$. 
In the reverse case put $g(Y) = \EE\{\bX\T\bbeta_0 | Y\}$ and apply conditional expectation to obtain $\EE\{g(Y)\bX\T\bbeta_0\} = \Var\{\EE(\bX\T\bbeta_0 | Y)\} > 0$.
\end{proof}


\section{Additional Simulation Results}\label{more:plots}

\subsection[]{$\bSigma = \eye$}

\begin{figure}[H]
  \centering
  \includegraphics[width=1\linewidth]{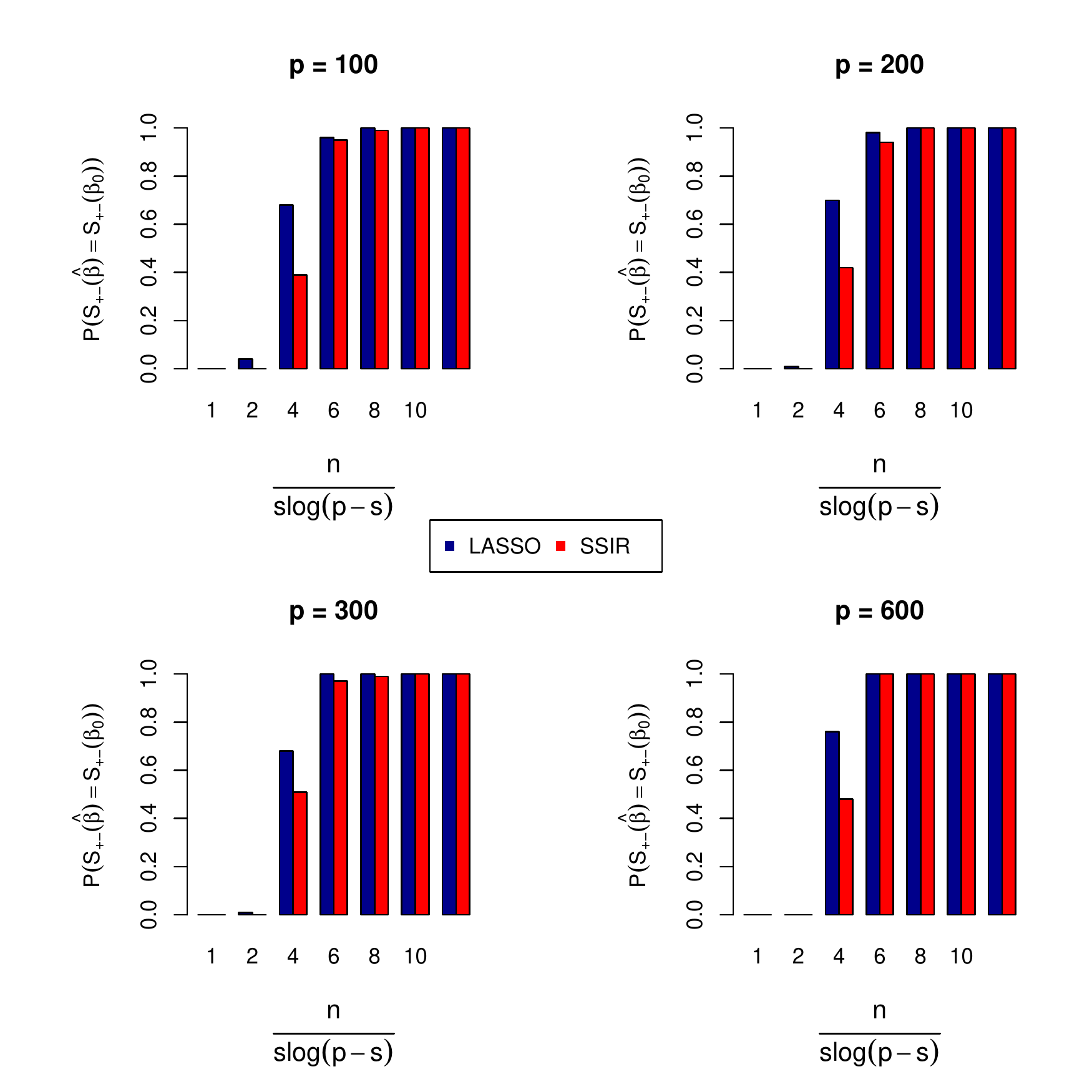}
  \caption{Model (\ref{sinmodel})}
\end{figure}%

\begin{figure}[H]
  \centering
  \includegraphics[width=1\linewidth]{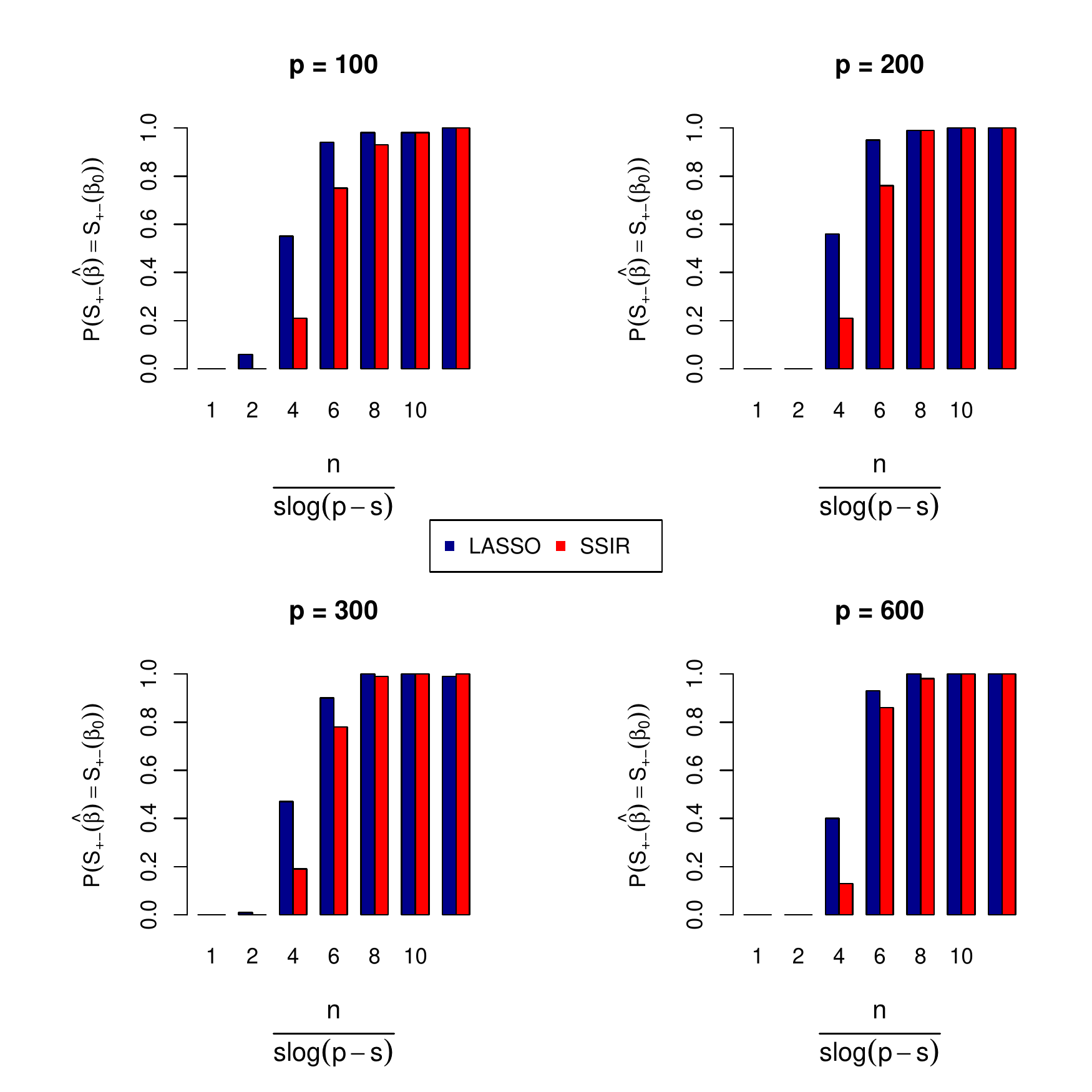}
  \caption{Model (\ref{regmodel})}
\end{figure}%

\subsection[]{$\bSigma: \Sigma_{kj} = 2^{-|k-j|}$}

\begin{figure}[H]
  \centering
  \includegraphics[width=1\linewidth]{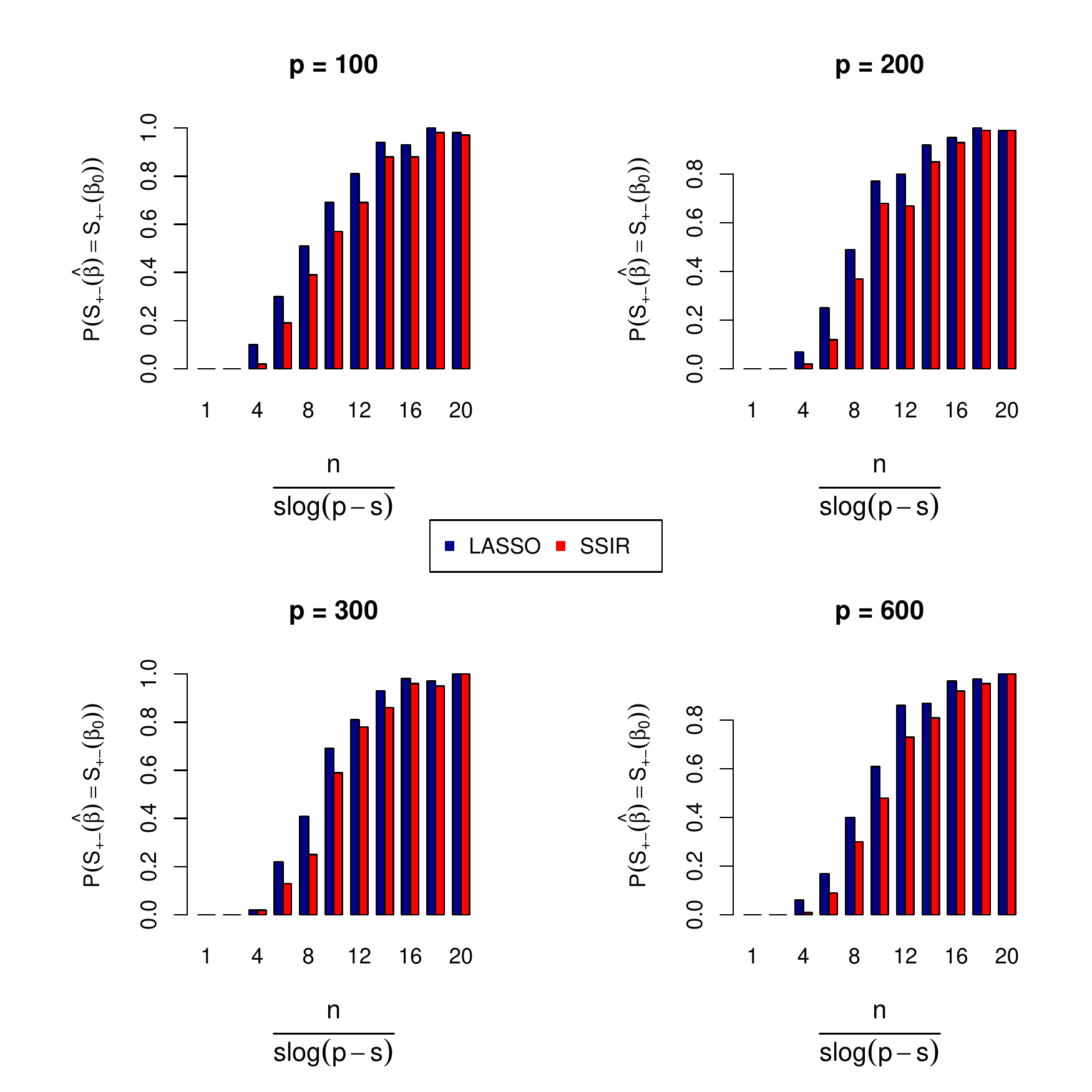}
  \caption{Model (\ref{sinmodel})}
\end{figure}%

\begin{figure}[H]
  \centering
  \includegraphics[width=1\linewidth]{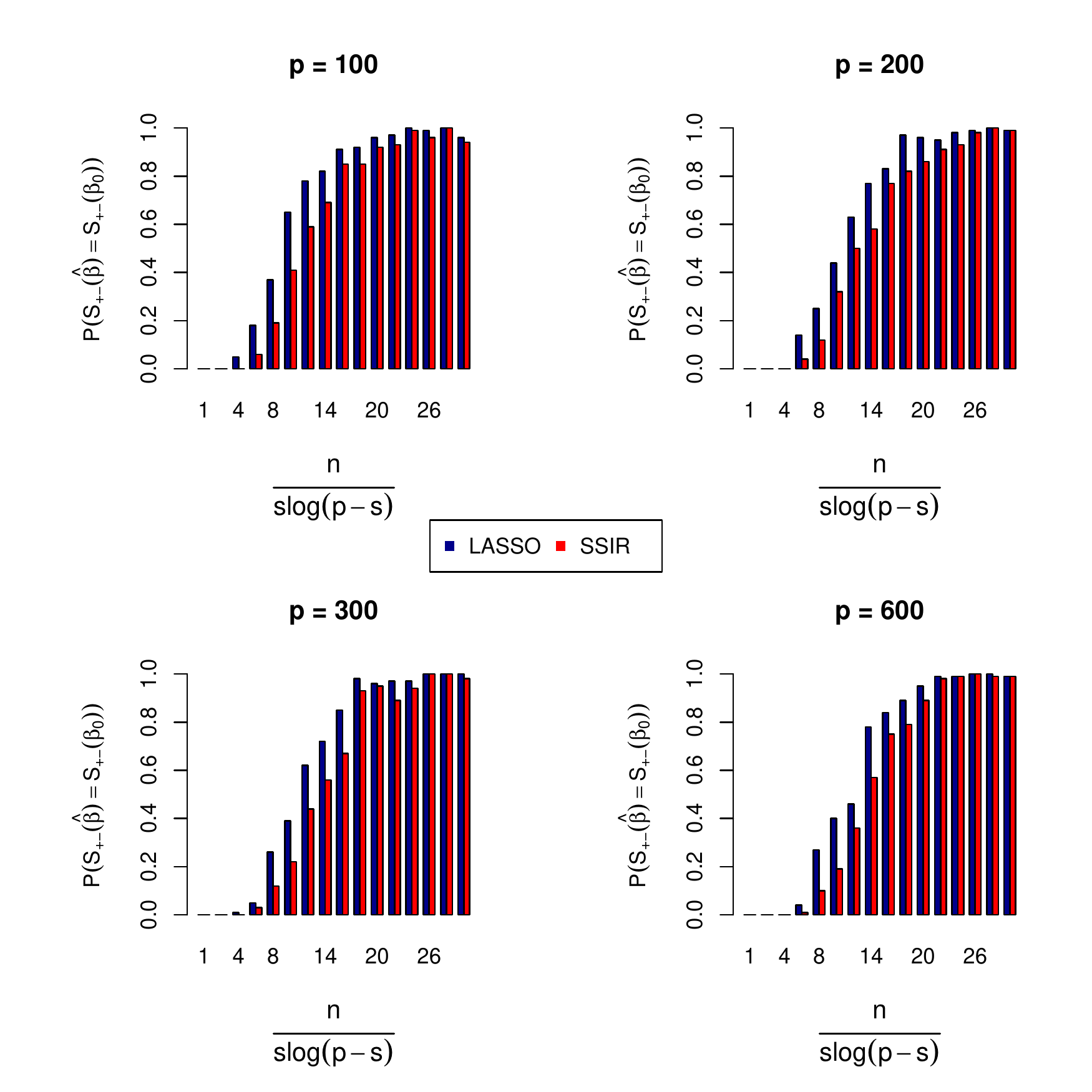}
  \caption{Model (\ref{regmodel})}
\end{figure}%

\newpage

\bibliographystyle{agsm}
\bibliography{some_references}

\end{document}